\patchcmd{\tableofcontents}{\MakeUppercase}{}{}{}
\patchcmd{\tableofcontents}{\MakeUppercase}{}{}{}
\def\th@plain{\slshape}\makeatother
\makeatletter\patchcmd{\th@remark}{\itshape}{\slshape}{}{}\makeatother
\newcounter{bidon}
\newcommand \sibrouillon[1]{}
\begin{document}
%\selectlanguage{english}

\thispagestyle{empty}
~ 
\vspace{.5cm}

%:     center
\begin{center} 
   {\Large \bf The constructive content of a local-global principle
       
 with an application to the structure
 
 of a finitely generated projective module
 
 }

\smallskip 
--------------------------------

    {\Large \bf Le contenu constructif d'un principe local-global
    
 avec une application à la structure
 
 d'un module projectif de type fini
 
 }

\bigskip 
Henri Lombardi, Université de Franche-Comté, F-25030 Besançon Cedex, France, \\
email: {\tt Henri.lombardi@univ-fcomte.fr}

\end{center}

\bigskip 
This paper, written in French, appeared in\\ \emph{Publications Mathématiques de Besançon. Théorie des nombres.} Fascicule  94--95 \& 95--96, 1997.

\def\abstractname{\bf{Abstract}}
\begin{abstract}
We study the structure of an idempotent matrix $F$ over a commutative ring. We make explicit the fundamental system of orthogonal idempotents, hidden in this matrix, for each of which the matrix has a well-defined rank. Similarly we find a finite number of comaximal elements of the ring which make explicit the fact that the codomain of $F$ is locally free. Our proofs are based on the abstract local-global principle. We give two methods to recover a constructive proof of these results. The most interesting one is a constructive interpretation of a very simple version of the abstract local-global principle. We think we have made a significant step towards a constructive version of the ``Hilbert programme’’ for abstract algebra, i.e. the \emph{automatic} translation of proofs of abstract algebra into constructive proofs.
\end{abstract}

\smallskip 
\centerline{--------------------------------}

\def\abstractname{\bf{Résumé}}
\begin{abstract}
Nous étudions la structure d'une matrice projecteur $F$ sur un anneau 
commutatif. Nous explicitons le système fondamental d'idempotents 
orthogonaux, caché dans cette matrice, pour chacun desquels la matrice a 
un rang bien défini. De même nous trouvons un nombre fini 
d'éléments de l'anneau qui l'engendrent en tant qu'idéal et qui 
permettent d'expliciter le module projectif image de $F$ comme localement 
libre. Nos démonstrations sont basées sur le principe local-global abstrait. 
Nous donnons deux méthodes pour récupérer une démonstration constructive 
des résultats obtenus. La plus intéressante est une interprétation 
constructive du principe local-global abstrait le plus élémentaire. Il 
nous semble qu'il s'agit là d'un pas non négligeable dans la mise en 
place du ``programme de Hilbert" pour l'algèbre abstraite, i.e. la 
traduction {\em automatique }  des démonstrations d'algèbre abstraite en 
démonstrations constructives. 
\end{abstract}

\smallskip 
\centerline{--------------------------------}

\bigskip This article was first rejected by the journal \emph{L’Enseignement Mathématique} without having been submitted to a reviewer. Perhaps the editor-in-chief felt that this article did not give a new interesting point of view concerning old results. More likely he was taken aback by the fact that the author claimed to revive Hilbert's programme, at least for abstract algebra.

 For the author, it was the confirmation of the efficiency of the dynamic method introduced in the article [2] which was only a pre-publication at the time. In the article [2] the method has been successfully used with complete formal first order theories in order to obtain constructive versions of the Nullstellensatz and its ordered and valued variants. In the present article, on the contrary, the corresponding formal theories are any 
first order coherent theories.

\newpage \thispagestyle{empty}
 In the book [9] quoted below, Theorems 1 and 2 of the present article are Theorems V-1.1 and V-1.3. The proofs come directly from those given here, but the reference to logic and dynamic theories has disappeared.

\bigskip 
\noindent {\bf References}

\medskip  Reference [2]  is today the following one.\\
Dynamical method in algebra: Effective Nullstellensätze. 
M. Coste,  H. Lombardi, M.-F. Roy.  
\emph{Annals of Pure and Applied Logic}
{\bf 111}, (2001) 203--256. 

\medskip Reference [10] cited as ``en préparation’’ is the paper  ``Platitude, localisation et anneaux de Prüfer,  
une approche constructive’’. 64 pages.  H. Lombardi, C. Quitté. In
\emph{Publications Mathématiques de Besançon.
Théorie des nombres.} Années 1998-2001.

\medskip Reference [9] has finally led to the book
{\em Algèbre commutative, Méthodes constructives}. 
 H.~Lombardi, C. Quitté. 2021, Calvage \& Mounet, seconde édition, revue et augmentée, du livre paru sous le même titre en 2011.

 \noindent An English translation appeared as {\em Commutative Algebra, Constructive Methods}\\ 
 H. Lombardi, C. Quitté. July 2015, Springer.\\
Revised version on ArXiv: \url{http://arxiv.org/abs/1605.04832}  

\bigskip \noindent  {\bf AMS classification.} 03F65, 13C10, 13B10

\medskip  \noindent  {\bf Key words.}  Constructive mathematics, Hilbert programme, dynamic evaluation, finitely generated projective modules, idempotent matrices, Fitting ideals, local-global principles.

\bigskip 
\centerline{--------------------------------}

%\selectlanguage{french}
%\def\frenchproofname{\emph{Démonstration}}
%\FrenchFootnotes

\bigskip L’article a d’abord été refusé par la revue \emph{L’Enseignement Mathématique} sans avoir été soumis à un rapporteur. Sans doute l’éditeur en chef estimait que cet article ne donnait pas un point de vue nouveau intéressant concernant des résultats anciens. Plus probablement il a été inquiété par le fait que l’auteur prétendait redonner vie au programme de Hilbert, au moins pour l’algèbre abstraite.   

  Pour l’auteur c’était la confirmation de l’efficacité de la méthode dynamique introduite dans l’article~[2] qui n’était à l’époque qu’une prépublication. Dans l’article [2] la méthode était utilisée avec succès avec des théories formelles complètes en vue d’obtenir des versions constructives du Nullstellensatz et de ses variantes ordonnées ou valuées. Dans l’article présent au contraire, les théories correspondantes sont des théories formelles cohérentes arbitraires.

 Dans le livre [9] cité ci-dessous, les théorèmes 1 et 2 de l’article présent sont les théorèmes V-1.1 \hbox{et V-1.3}. Les démonstrations sont directement issues de celles données ici, mais la référence à la logique et aux théories dynamiques a disparu. 

\bigskip \noindent {\bf Références}

\medskip  La référence [2]  est actualisée comme suit.\\
{\em Dynamical method in algebra: Effective Nullstellensätze.} 
M. Coste,  H. Lombardi et M.-F. Roy.  
Annals of Pure and Applied Logic
{\bf 111}, (2001) 203--256. 

\medskip La référence [10] citée ``en préparation’’ est l’article {\it Platitude, localisation et anneaux de Prüfer,  
une approche constructive}. 64 pages.  H. Lombardi et C. Quitté.
Publications Mathématiques de Besançon.
Théorie des nombres. Années 1998-2001.

\medskip La référence [9] a finalement abouti au livre
{\em Algèbre commutative, Méthodes constructives}, 
 H. Lombardi et C. Quitté, 2021. Calvage \& Mounet, seconde édition, revue et augmentée, du livre paru sous le même titre en 2011.
 
 \noindent Une traduction anglaise {\em Commutative Algebra, Constructive Methods}, 
 H. Lombardi, C. Quitté,  a été publiée par Springer en juillet 2015.\\
Une version révisée se trouve sur ArXiv: \url{http://arxiv.org/abs/1605.04832}

\newpage
\setcounter{page}{1}
\thispagestyle{empty}

%\patchcmd{\sectionmark}{\MakeUppercase}{}{}{}
%\setcounter{page}{0}
%\renewcommand\thepage{E\arabic{page}}
%\input{englishA.tex}
%
%\clearpage
%\newpage
%\thispagestyle{empty}
%
%%: peut être ajouter ici une page blanche pour que la version française commence en pdf impair
%
%\clearpage
%\newpage
%
%%\setcounter{page}{0}
%\renewcommand\thepage{F\arabic{page}}\renewcommand\theHsection{F\arabic{section}}
%!TEX encoding =  UTF-8 Unicode
%!TEX root  =  mainLGB97.tex

\clearpage

\selectlanguage{french}
\def\frenchproofname{\textsl{Démonstration}}
\FrenchFootnotes
%
%

%\thickmuskip = 7mu plus 2mu

\pagestyle{headings}
\patchcmd{\sectionmark}{\MakeUppercase}{}{}{}

\newtheorem{theorem}{Théorème}   
\newtheorem{thdef}[theorem]{Théorème et définition}
\newtheorem{proposition}{Proposition}[section]
\newtheorem{lemma}[proposition]{Lemme}
\newtheorem{corollary}[proposition]{Corollaire} 
\newtheorem{propdef}[proposition]{Proposition et définition}
\newtheorem{plcc}[proposition]{Principe local-global concret}
\newtheorem{plca}[proposition]{Principe local-global abstrait}
\newtheorem{plcd}[proposition]{Principe local-global dynamique}
\newtheorem{fact}[proposition]{Fait}

\theoremstyle{definition}
\newtheorem{definition}[proposition]{Définition}
\newtheorem{convention}[proposition]{Convention}
\newtheorem{notation}[proposition]{Notation} 

\theoremstyle{remark}
\newtheorem{remark}[proposition]{Remarque}
\newtheorem{comment}[proposition]{Commentaire}
\newtheorem{example}[proposition]{Exemple}
\newtheorem{problem}[proposition]{Problème}
\newtheorem{question}[proposition]{Question}

\newcommand {\junk}[1]{}

\def \eop {\hbox{}\nobreak\hfill
\vrule width 1.4mm height 1.4mm depth 0mm \par \goodbreak 
\smallskip}

\def\.@{\char'76}
\def\mapright#1{\smash{\mathop{\longrightarrow}\limits^{#1}}} 
\def\maprightto#1{\smash{\mathop{\longmapsto}\limits^{#1}}} 
\def\mapdown#1{\downarrow\rlap{$\vcenter{\hbox{$\scriptstyle 
#1$}}$}}

\def\eqdefi{\buildrel{\rm def}\over =}

\def\eqdf#1{\buildrel{#1}\over =}
\def\vers#1{\buildrel{#1}\over \longrightarrow }
\def\impdef#1{\buildrel{#1}\over \Longrightarrow} 

\def \Z{\mathbb{Z}}
\def \N{\mathbb{N}}
\def \R{\mathbb{R}}

\def\={\hbox{~{\bf =}~}}
 
\def \E{\mathaccent19}    %accent aigu math

\def \ni {\noindent}
\def \ss {\smallskip}
\def \sni {\ss\ni}
\def \ms {\medskip}
\def \mni {\ms\ni}
\def \bs {\bigskip}
\def \bni {\bs\ni}
\def \snic#1 {\smallskip\noindent\centerline{$#1$}\smallskip}
\def \ce{\centering}

\def \kXn {k[X_1,\ldots,X_n]}

\def \Bn {{\bf B}_n}

\def \cd {{\cal D}}
\def \ci {{\cal I}}
\def \cj {{\cal J}_n}
\def \cf {{\cal F}}
\def \cp {{\cal P}}
\def \cm {{\cal M}}

\def \I {{\rm I}}
\def \In {{\rm I}_n}
\def \Izero {{\ci}_{=0}}
\def \Ua {{\rm Unit}}
\def \Rzero {R_{=0}}
\def \Rua {R_{\Ua}}
\def \Mua {{\cm}_{\Ua}}
\def \Ja {{\rm Rnul}}
\def \Rja {R_{\Ja}}
\def \Ija {{\ci}_{\Ja}}

\def \Hom {{\rm Hom}}
\def \det {{\rm det}}
\def \Ker {{\rm Ker}}
\def \Im {{\rm Im}}
\def \Id {{\rm Id}}
\def \Mat {{\rm Mat}}
\def \Ann {{\rm Ann}}
\def \Spec {{\rm Spec}}

\def \Ap {A_{\cal P}}
\def \zg {\Z[G]}

\def \num {{n$^{{\rm o}}$}}

\def \sign {\mbox{\rm sign}}

\def \eop {\hbox{}\nobreak\hfill
\vrule width 1.4mm height 1.4mm depth 0mm \par \smallskip}

\title{ Le contenu constructif d'un principe local-global 
\\ avec une application~à la structure \\ d'un module projectif de type fini }
\author{ H. Lombardi }
\date{6 Janvier 97}
\maketitle

\centerline {Laboratoire de Mathématiques de Besan\c con}
\centerline {URA CNRS 741}
\centerline {UFR des Sciences et Techniques}
\centerline {Université de Franche-Comté}
\smallskip
\centerline {email: henri.lombardi@univ-fcomte.fr}
\bigskip
\bigskip

%--------  abstract  --------------
\begin{abstract}
Nous étudions la structure d'une matrice projecteur $F$ sur un anneau 
commutatif. Nous explicitons le système fondamental d'idempotents 
orthogonaux, caché dans cette matrice, pour chacun desquels la matrice a 
un rang bien défini. De même nous trouvons un nombre fini 
d'éléments de l'anneau qui l'engendrent en tant qu'idéal et qui 
permettent d'expliciter le module projectif image de $F$ comme localement 
libre. Nos démonstrations sont basées sur le principe local-global abstrait. 
Nous donnons deux méthodes pour récupérer une démonstration constructive 
des résultats obtenus. La plus intéressante est une interprétation 
constructive du principe local-global abstrait le plus élémentaire. Il 
nous semble qu'il s'agit là d'un pas non négligeable dans la mise en 
place du ``programme de Hilbert'' pour l'algèbre abstraite, i.e. la 
traduction {\sl automatique}  des démonstrations d'algèbre abstraite en 
démonstrations constructives. 
\end{abstract}
%----------------------

\bni {\bf Classification AMS.} 03F65, 13C10, 13B10

\bni {\bf Mots clés.}  Mathématiques constructives, Programme de
Hilbert, \'Evaluation dynamique, Mo\-du\-les projectifs de type fini,
Matrices de projection, Idéaux de Fitting, Principes local-globals.

\bigskip \noindent Note. Dans la version présentée ici, le texte est inchangé mais on a corrigé des fautes d'orthographe, amélioré la mise en page, actualisé l'adresse mail et ajouté une table des matières.

\newpage 
\tableofcontents
\newpage

\markboth{Introduction}{Introduction}
%----- SECTION{sec Introduction}-----------------------
\section*{Introduction} \label{sec Introduction}
\addcontentsline{toc}{section}{Introduction}
%-----------------------------------------------------
Dans cet article, tous les anneaux considérés sont commutatifs.

Notre but est de comprendre en termes concrets les théorèmes suivants.
%--- Theorem{ptf-loc }-----------------
\begin{theorem}[caractérisation locale des modules projectifs de type fini] \label{th ptf-loc} ~

\noindent  Un module $M$  sur un anneau~$A$  est 
projectif de type fini si et seulement si il est {\em  localement libre} 
au sens suivant: il existe $s_1,\ldots,s_m$ dans~$A$ tels que,

\noindent $\bullet$  $s_1A+\cdots +s_mA=A$, et

\noindent $\bullet$  les $M_{s_i}$ obtenus~à partir de $M$ en étendant 
les scalaires aux  $A_{s_i}$ ($A_s$ désigne le localisé où on 
autorise le dénominateur $s$)  sont libres.
\end{theorem}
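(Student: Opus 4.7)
Je traiterais les deux sens de l'équivalence séparément, car ils sont de nature bien différente.

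Pour le sens facile (localement libre $\Rightarrow$ projectif de type fini), je partirais d'une relation de comaximalité $\sum_i u_i s_i = 1$ et, pour chaque~$i$, d'une base $(e^{(i)}_1,\ldots,e^{(i)}_{r_i})$ de $M_{s_i}$ sur $A_{s_i}$. Après multiplication par une puissance convenable de $s_i$, on peut supposer que chaque $e^{(i)}_j$ provient d'un élément $x^{(i)}_j \in M$. En recollant au moyen de la partition de l'unité, on vérifie alors que la famille finie des $x^{(i)}_j$ engendre $M$, et que les formes coordonnées $\varphi^{(i)}_j : M_{s_i} \to A_{s_i}$ associées aux bases locales se recollent, après multiplication par les $u_i s_i^{N}$ adéquats, en un système fini de formes linéaires globales $M \to A$. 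Cela équivaut à la construction d'une matrice idempotente $F$ sur $A$ dont l'image est isomorphe à~$M$, d'où la projectivité de type fini.

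Pour le sens difficile (projectif de type fini $\Rightarrow$ localement libre), je représenterais $M$ comme image d'une matrice idempotente $F \in M_n(A)$. Le but est alors d'exhiber des éléments comaximaux $s_1,\ldots,s_m \in A$ tels que, sur chaque $A_{s_i}$, la matrice $F$ soit semblable à un projecteur standard $\mathrm{diag}(1,\ldots,1,0,\ldots,0)$, ce qui garantit que $M_{s_i} = \Im F$ est un $A_{s_i}$-module libre. Une preuve abstraite procède par localisation aux idéaux maximaux: $A_{\mathfrak m}$ étant local, tout élément est inversible ou dans l'idéal maximal, ce qui permet un algorithme de réduction de Gauss sur $F$ et la met sous forme standard ; donc $M_{\mathfrak m}$ est libre. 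Par un argument de \emph{spreading out}, ceci se prolonge à un voisinage $D(s)$ de $\mathfrak m$ dans $\Spec A$. L'ensemble des $s$ tels que $M_s$ soit libre n'étant contenu dans aucun idéal maximal, il engendre $A$ comme idéal, et la compacité de $\Spec A$ fournit un nombre fini d'éléments comaximaux $s_i$ convenables.

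L'obstacle principal est bien évidemment cette dernière étape de passage du local au global. La preuve classique repose de façon essentielle sur deux ingrédients non constructifs: l'axiome (ou lemme de Zorn) assurant que tout idéal strict est contenu dans un idéal maximal, puis la compacité du spectre premier. Pour en extraire une démonstration constructive — ce qui est précisément l'objet de l'article — il faudra contourner ces deux points en donnant une lecture effective du principe local-global abstrait, permettant de produire explicitement les $s_i$ à partir de la preuve abstraite. C'est donc dans cette interprétation constructive, et non dans les manipulations d'algèbre linéaire qui sont routinières, que résidera la véritable difficulté du travail.
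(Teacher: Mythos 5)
Votre proposition est correcte en mathématiques classiques, mais le sens difficile suit une route réellement différente de celle du texte. Pour le sens facile, votre recollement des bases et des formes coordonnées est essentiellement le principe local-global concret \ref{prop ptf-local-concret}; notez seulement que le relèvement des formes $M_{s_i}\rightarrow A_{s_i}$ en formes linéaires globales exige de savoir au préalable que $M$ est de présentation finie (fait \ref{fact homom loc prf}), ce qui s'obtient en recollant d'abord ``de type fini'' puis ``de présentation finie'', étapes que vous passez sous silence. Pour le sens difficile, votre argument (localisation en chaque idéal maximal, liberté de $M_{\mathfrak m}$ par le lemme de la liberté locale, étalement sur un ouvert $D(s)$, puis quasi-compacité de $\mathrm{Spec}(A)$) est la démonstration abstraite classique: c'est exactement le schéma de la deuxième démonstration du principe \ref{plca ring} et du principe \ref{prop ptf-local-abstrait}, qui ne produit aucun $s_i$ explicite et dont le texte souligne qu'il ne semble pas pouvoir être rendu constructif. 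Le texte procède autrement: posant $\det(\mathrm{I}_n+XF)=r_0+r_1(1+X)+\cdots+r_n(1+X)^n$, il montre (théorème \ref{th matproj}) que $(r_0,\ldots,r_n)$ est un système fondamental d'idempotents orthogonaux et que les mineurs d'ordre $k+1$ de $r_kF$ sont nuls --- seules des \emph{égalités} sont à vérifier, de sorte que le recollement abstrait des égalités suffit ---, puis il prend pour éléments comaximaux les $s_{k,i}=r_kt_{k,i}$, produits des $r_k$ par les mineurs diagonaux d'ordre $k$ de $F$, et le lemme de la liberté donne la liberté de chaque $M_{s_{k,i}}$ (théorème \ref{theorem loc libre}). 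Ce que cette voie achète: une famille comaximale explicite (au plus $2^n$ éléments) en fonction des coefficients de $F$, et un unique ingrédient non constructif --- le recollement des égalités --- que la section \ref{sec local-global} réinterprète ensuite constructivement par évaluation dynamique; votre voie par idéaux maximaux repose au contraire sur Zorn et la compacité du spectre et ne laisse pas cette porte de sortie, comme vous le diagnostiquez d'ailleurs correctement en conclusion.
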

%-------------------------------------------
%--- Theorem{ptf-idpt }----------------
\begin{theorem}[décomposition d'un module  projectif de type fini en somme directe de modules de rang constant] \label{th ptf-idpt} ~

\noindent  Si  $M$ est un module projectif de type fini sur un anneau~$A$ engendré par~$n$ éléments, il existe un système fondamental d'idempotents 
orthogonaux $r_0, r_1, \ldots, r_n$ tel que chaque localisé $M_{r_k}$ 
soit un module projectif de rang $k$ sur $A_{r_k}$. En outre~$A$ est 
naturellement isomorphe~à $A_{r_0}\times A_{r_1}\times\cdots\times 
A_{r_n}$ et $M$~à $M_{r_0}\times M_{r_1}\times\cdots \times M_{r_n}$. 
\end{theorem}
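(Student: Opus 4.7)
The plan is to realise $M$ as the image of an idempotent matrix and then read the promised idempotents directly off a single polynomial invariant of that matrix. Since $M$ is projective of type fini, engendré par $n$ éléments, one can write $M = \operatorname{Im} F$ for some idempotent $F \in M_n(A)$ (this is the easy direction, a matrix presentation of a projective module). All the information we need will be extracted from the polynomial
\[
R_F(X) = \det\bigl(I_n + (X-1)F\bigr) \in A[X],
\]
which has degree $\le n$; define $r_0, r_1, \ldots, r_n \in A$ as its coefficients, so that $R_F(X) = \sum_{k=0}^{n} r_k X^k$.

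The heart of the argument is the local computation. By Théorème~\ref{th ptf-loc}, after localising at the $s_i$ (or, for the abstract version, at any prime of $A$), $F$ becomes a standard projecteur, i.e.\ is similar to $\mathrm{diag}(1,\ldots,1,0,\ldots,0)$ with some number $j$ of $1$'s; then
\[
I_n + (X-1)F \;\sim\; \mathrm{diag}(\underbrace{X,\ldots,X}_{j},\underbrace{1,\ldots,1}_{n-j}),
\]
so $R_F(X) = X^{j}$ locally. Consequently, locally, the coefficient $r_k$ equals $\delta_{k,j}$. This already implies, locally and hence globally by the abstract principe local-global, that each $r_k$ is idempotent, that $r_k r_\ell = 0$ for $k \ne \ell$, and that $r_0 + r_1 + \cdots + r_n = 1$: these are relations of the form ``such and such elements of $A$ are zero'', which is a local notion.

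Once the orthogonal system is in hand, the promised isomorphisms $A \simeq \prod_k A_{r_k}$ and $M \simeq \prod_k M_{r_k}$ are a standard consequence of a fundamental system of orthogonal idempotents. It remains to see that $M_{r_k}$ is of constant rank $k$ over $A_{r_k}$. On inverting $r_k$ one may freely use the identity $r_k = 1$; reading the local computation the other way around, wherever $F$ locally has rank $j \ne k$ we would get $r_k = 0$ locally, which is incompatible with $r_k$ being invertible. Hence after localising at $r_k$, the image of $F$ is everywhere locally free of rank $k$, which is the definition of being of constant rank $k$ — again by Théorème~\ref{th ptf-loc}.

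The only genuine obstacle is, of course, precisely the one the paper is about: every step above is phrased in terms of the abstract local-global principle (localisation at every prime), and the whole point of the article is to turn this into a concrete, constructive statement producing explicit witnesses for the idempotency and orthogonality of the $r_k$. For the statement of Théorème~\ref{th ptf-idpt} as written, however, the polynomial $R_F(X) = \det(I_n + (X-1)F)$ does all the work, and the abstract principe local-global finishes it.
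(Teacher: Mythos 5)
Your proposal is essentially the paper's own proof: Théorème~\ref{th matproj} introduces exactly the polynomial $\det(\In+X F)=\R(1+X)$, shows its coefficients $r_0,\ldots,r_n$ form a système fondamental d'idempotents orthogonaux by the same local computation (over a local ring $F$ is similar to a standard projector, so locally $r_k=\delta_{kj}$) glued by the principe local-global abstrait de recollement des égalités, and Théorème~\ref{theorem loc libre} then deduces Théorème~\ref{th ptf-idpt} just as you conclude, with your ``rank at every prime'' step being the classically equivalent reading of the rank-$k$ condition. The only nuance is that the local step should be credited to the lemme de la liberté locale (a projective module over a local ring is free, i.e.\ $F$ is similar to $\I_{j,n,n}$) rather than to Théorème~\ref{th ptf-loc}, whose explicit form is itself obtained in the paper from this very construction and which, as stated, only gives freeness of $\Im F$ over the $A_{s_i}$, not the similarity you use.
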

%-------------------------------------------

La partie la plus mystérieuse du théorème \ref{th ptf-loc} est que 
la condition est nécessaire. En pratique, le module $M$ peut être vu 
comme l'image dans  $A^n$ d'une {\sl  matrice de projection } $F$ (i.e., 
$F^2=F$)~à coefficients dans~$A$. On veut récupérer les $s_k$~à 
partir des coefficients  $f_{i,j}$ de $F$. De même dans le théorème 
\ref{th ptf-idpt} on veut récupérer les $r_k$~à partir des 
coefficients  $f_{i,j}$ de $F$.  Ceci est réalisé dans les 
théorèmes~\ref{th matproj} et~\ref{theorem loc libre}.

L'idée générale pour obtenir ces résultats est la suivante. On 
remarque pour commencer que si~$A$ est intègre, le module est de rang 
$k$ avec 
$0\leq k\leq n$ et le polynôme  caractéristique
 de $F$ est alors \hbox{$(X-1)^kX^{n-k}$}. Son coefficient de degré $n-k$ est 
égal~à $(-1)^k$ et c'est la somme des mineurs diagonaux d'ordre $k$. 
Ce sont ces mineurs qu'il faut prendre comme $s_i$ pour obtenir les 
localisés libres dans le théorème \ref{th ptf-loc}.
Enfin, si on ne suppose pas~$A$ intègre, et notamment dans le cas 
générique, les rangs possibles se mélangent de manière bien 
contrôlée gr\^ace~à un système fondamental d'idempo\-tents 
orthogonaux qui se lisent sur le polynôme caractéristique de $F$.

En fait 
on est particulièrement intéressé par le cas générique: 
$A=\Bn =\Z[(f_{i,j})_{1\leq i,j\leq n}]/\cj$, où~$\cj$ 
est l'idéal défini par les $n^2$ relations obtenues en écrivant 
$F^2=F$.

\ms Le principe local-global abstrait en algèbre commutative est un 
principe informel selon lequel certaines propriétés concernant les 
modules sur les anneaux commutatifs sont vraies si et seulemment si elles 
sont vraies après localisation en n'importe quel idéal premier. Dans 
nos démonstrations, le seul ingrédient non constructif est un principe 
local-global abstrait de recollement des égalités.  
La démonstration de ce principe utilise des outils hautement non constructifs 
(dont le recours~à la considération de tous les idéaux premiers de 
$A$). Dans la section \ref{sec local-global}, nous expliquons comment 
interpréter de manière constructive ce principe local-global. En gros, 
le cadre de l'évalutation dynamique permet de traiter les idéaux 
premiers de l'anneau comme des objets idéaux présents seulement~à 
l'état latent et parfaitement inoffensifs. Ceci nous permet 
d'interpréter l'utilisation du principe abstrait de recollement des 
égalités comme une machinerie purement calculatoire~à l'intérieur 
des évalutations dynamiques. En définitive, nous récupérons une 
démonstration constructive complète des théorèmes concrets que ce principe 
permet de démontrer. 

Il nous semble qu'il s'agit là d'un pas non négligeable dans la mise 
en place du ``programme de Hilbert'' pour l'algèbre abstraite, i.e. la 
traduction automatique des démonstrations d'algèbre abstraite en démonstrations 
constructives{\footnote{~Du moins lorsque le résultat est de nature 
concrète}}. Notre espoir est notamment d'obtenir une relecture 
constructive automatique du chapitre IV de \cite{Kun} concernant la 
méthode générale de passage du local au global.        

\ms Les références générales pour ce travail sont les suivantes. 
Dans \cite{MRR} on trouve une approche constructive des bases de 
l'algèbre. Les théorèmes cités ci-dessus, pour lesquels nous 
demandons une explicitation précise, ainsi que ceux cités dans la 
section suivante (Rappels) sont dans les traités classiques d'algèbre 
commutative (cf. par exemple \cite{Kni}, \cite{Bou}, \cite{Kun}, 
\cite{Nor}.) 
Plus précisément on peut trouver le théorème \ref{th ptf-loc} 
comme (partie du) théorème 1 dans \cite{Bou} chap. II {\S}5, ou comme 
(partie du) théorème 3.3.7 de \cite{Kni}, on peut trouver   le 
théorème \ref{th ptf-idpt} comme exercice 3 dans \cite{Bou} \hbox{chap. II 
{\S}5}.

\ms Nous n'avons pas trouvé dans la littérature concernant la 
structure des modules projectifs de type fini des théorèmes aussi 
explicites que les théorèmes \ref{theorem loc libre}, \ref{th ptf rang 
constant} et \ref{theorem rang constant}, que nous donnons~à la section~\ref{sec Matproj}. Il nous semble également que pour certains autres 
résultats de nature concrète contenus dans cet article, il n'existait 
pas pour le moment de démonstration entièrement constructive. Nous l'avons 
alors signalé dans le cours de l'article.  

\ms L'article est organisé comme suit. Dans la section 1, nous faisons 
quelques rappels d'algèbre commutative, dans le but notamment de mettre 
en valeur le caractère constructif de nombreux théorèmes de base et 
de présenter quelques aspects du principe local-global.

Dans la section \ref{sec Matproj}, nous donnons une explicitation précise des 
théorèmes \ref{th ptf-loc} et \ref{th ptf-idpt}. Nous faisons appel dans la démonstration~à un principe 
local-global abstrait élémentaire mais non constructif. Nous terminons 
en remarquant que dans le cas générique, toute la démonstration peut être 
rendue constructive, moyennnant un gros travail sur les idéaux des 
anneaux de polynômes~à coefficients entiers. Ceci assure la validité 
constructive de tous les théorèmes de la section \ref{sec Matproj}, dans tous les cas 
(pas seulement le cas générique).

Dans la section \ref{sec local-global}, nous donnons une interprétation constructive du 
principe local-global abstrait de recollement des égalités. Ceci 
permet de rendre constructives les démonstrations de la section \ref{sec Matproj} selon l'esprit 
du programme de Hilbert: donner une garantie automatique de la validité 
constructive des résultats concrets obtenus par des méthodes 
abstraites.

Dans la section \ref{sec iclg2}, nous apportons quelques compléments sur le thème du programme de Hilbert.

Dans l'article en préparation \cite{LQ}, nous donnons un traitement 
entièrement élémentaire, sans recours aux principes local-globals 
abstraits ni~à leur version dynamique et constructive, des résultats 
que nous démontrons ici. Dans un autre article en préparation 
(\cite{Lo}), nous essayons de tenir la promesse d'une relecture 
constructive automatique des principes local-globals abstraits dont nous 
avons connaissance.  

\mni {\bf  Remerciements.}  Nous remercions Fred Richman pour sa lecture 
attentive et ses suggestions.
%----- SECTION{Rappels}--------------------------------
\section{Rappels} \label{secRappels}
%-----------------------------------------------------
Nous donnons ici quelques rappels concernant les modules projectifs de 
type fini et la localisation, de manière~à faciliter la lecture de la 
suite au lecteur ou~à la lectrice non averti(e), et~à faciliter la 
discussion, dans la section \ref{sec local-global}, au sujet du 
caractère constructif des résultats obtenus précédemment. Le 
lecteur ou la lectrice{\footnote{~Désormais, la personne humaine qui 
intervient au cours de cet article subira la règle inexorable de 
l'alternance des sexes. Espérons que les lecteurs n'en seront pas plus 
affectés que les lectrices. En tout cas, cela nous éconmisera bien des 
``ou'' et bien des ``(e)''.}} 
 qui connaît bien ces sujets mais qui est intéressé(e) par la 
critique constructive des démonstrations classiques pourra donc également jeter 
un coup d’œil sur cette section. 
%---- SUBsection{prf}------------------------------
\subsection{Modules de présentation finie} 
\label{subsec prf}
%-------------------------------------------------------------

Un module {\sl  de présentation finie} est un~$A$-module donné par un 
nombre fini de générateurs et de relations. De manière 
équivalente, c'est un module $M$ isomorphe au conoyau d'un homorphisme 
%-----------------begin $$------------------
$$g: A^m\rightarrow A^q
$$
%-----------------end $$------------------
La matrice $G$ de $g$ a pour colonnes les relations entre les 
générateurs 
$a_1,\ldots,a_q$ (les images de la base canonique de $A^q$ par $\pi:A^n\to M\simeq \mathrm{Coker} (g)$).
Une telle matrice s'appelle une {\sl matrice de présentation du module~$M$}. 
On ne change pas la structure de $M$ lorsque l'on fait subir~à $G$ une des 
transformations suivantes:

\noindent --- ajout d'une colonne nulle, (ceci ne change pas le module des 
relations entre des générateurs fixés),

\noindent --- suppression d'une colonne nulle, sauf~à aboutir~à une 
matrice vide,

\noindent --- remplacement de $G$, de type $q\times m$, par $G'$ de type
$(q+1)\times (m+1)$ obtenue~à partir de $G$ en rajoutant une ligne nulle 
en dessous puis une colonne~à droite avec $1$ en position $(q+1,m+1)$, 
(ceci revient~à rajouter un vecteur parmi les générateurs, en 
indiquant sa dépendance par rapport aux générateurs précédents):
%--------------------begin $$---------------
$$G\;\mapsto \;G'\;=\;
\pmatrix{ 
    G      &C           \cr 
    0_{1,m}&1     
}$$
%---------------------end $$--------------

\noindent --- opération inverse de la précédente, sauf~à aboutir 
à une matrice vide,

\noindent --- ajout~à une colonne d'une combinaison linéaire des 
autres colonnes, (ceci ne change pas le module des relations entre des 
générateurs fixés),

\noindent --- ajout~à une ligne d'une combinaison linéaire des autres 
lignes, (ceci revient~à changer le système générateur en rempla\c 
cant par exemple le générateur $a_q$ par un élément de la forme
$a_q-\Sigma_{i=1,\ldots,q-1} \;\lambda_i a_i$ sans  changer les autres 
générateurs),

\noindent --- permutation de colonnes ou de lignes,

\noindent --- multiplication d'une colonne ou d'une ligne par un 
élément inversible  (facultatif).

\smallskip On voit aisément que si $G$ et $H$ sont deux matrices de 
présentation d'un même module~$M$, on peut passer de l'une~à 
l'autre au moyen des transformations décrites ci-dessus. Un peu mieux: 
on voit que pour tout système générateur fini de $M$, on peut 
construire~à partir de~$G$, en utilisant ces transformations, une 
matrice de présentation de $M$ correspondant au nouveau système 
générateur. Notez aussi qu'un changement de base de $A^q$ ou $A^m$ 
correspond~à la multiplication de $G$ (à gauche ou~à droite) par une 
matrice inversible, et peut être réalisé par les opérations 
décrites précédemment. 

Un module libre de rang $k$ est présenté par une matrice colonne 
formée de $k$ zéros.

Il existe un cas facile où une matrice présente un module libre. 
Rappelons que deux matrices de même type $q\times m$ sont dites {\sl  
équivalentes} lorsque l'on passe de la première~à la seconde en 
multipliant la première,~à droite et~à gauche, par deux matrices 
inversibles.

%--- Lemme{de la liberté}------------
\mni {\bf  Lemme de la liberté.} \label{lem prf libre}  
{\sl Soit $M$ un module de présentation finie, (isomorphe au) conoyau 
d'une matrice~$G$ de type
$q\times m$ (i.e. le module est donné par $q$ générateurs soumis~à 
$m$ relations). Si la matrice $G$ contient un mineur d'ordre $k$ 
inversible et si tous les mineurs d'ordre $(k+1)$ sont nuls, alors elle 
est équivalente~à la matrice canonique
%--------------------begin pmatrix---------------
$$\I_{k,q,m}\;=\;
\pmatrix{ 
    \I_{k}   &0_{k,m-k}      \cr 
    0_{q-k,k}&     0_{q-k,m-k}      }$$
%---------------------end pmatrix--------------
Alors, le module $M$ est libre de rang $q-k$. En fait, dans ce cas, 
l'image, le noyau et le conoyau de~$G$ sont libres, respectivement de 
rangs $k$, $m-k$ et $q-k$. En outre l'image et le noyau possèdent des 
supplémentaires libres.  
}
%-------fin lemme de la liberte--------------
%----begin{proof---------------
\begin{proof} En permutant éventuellement les lignes et les 
colonnes on ramène le mineur inversible en haut~à gauche. Puis en 
multipliant ~à droite  (ou~à gauche) par une  matrice inversible  on 
se ramène ~à la forme 
%--------------------begin pmatrix---------------
$$G_1\;=\; 
\pmatrix{ 
\I_k & A     \cr
  B &  C    \\
} $$ 
%---------------------end pmatrix--------------
puis par des manipulations élémentaires de lignes et de colonnes, on 
obtient
%--------------------begin pmatrix---------------
$$G_2\;=\;
\pmatrix{ 
   \I_{k}   &0_{k,m-k}      \cr 
    0_{q-k,k}&     G_3}$$
%---------------------end pmatrix--------------
et $G_3$ est nulle parce que tous les mineurs d'ordre $(k+1)$ de $G_2$ 
sont nuls.  
\end{proof}
%----end{proof------------------
%---- SUBsection{ptf}------------------------------
\subsection{Modules projectifs de type fini} \label{subsec ptf}
%-------------------------------------------
Ils sont caractérisés de la manière suivante.
%--- Propdef{ptf}----------------------
\begin{propdef}[modules projectifs de type fini] \label{propdef ptf}
Les propriétés suivantes pour un~$A$-module  $M$  sont 
équivalentes.
\begin{enumerate}
\item [a)]   $M$  est  isomorphe ~à un facteur direct dans un~$A$-module $A^n$, 
i.e. il existe un entier $n$, un~$A$-module $N$ et un isomorphisme 
$M\oplus N \rightarrow A^n$.
\item [b)]  Il existe un entier  $n$, des générateurs  $(g_i)_{i=1,\ldots 
,n}$  de  $M$  et des formes linéaires  $(\alpha_i)_{i=1,\ldots ,n}$  
sur  $M$  telles que :      $\quad \forall  x\in M   \qquad    x = 
\Sigma\; \alpha_i (x)  g_i$.
\item [b')]  $M$  est de type fini et pour tout système fini de générateurs  
$(h_i)_{i=1,\ldots ,m}$  de  $M$  il existe des formes linéaires  
$(\beta_i)_{i=1,\ldots ,m}$  sur  $M$  telles que:      $\quad \forall  
x\in M   \qquad    x = \Sigma\;  \beta_i (x)  h_i$.
\item [c)]   Il existe un entier  $n$  et deux applications linéaires   $\varphi 
: M\rightarrow A^n$  et  $\psi : A^n\rightarrow M$  telles que  $\psi\circ 
\varphi= {\rm  Id}_M$. On a alors 
$A^n=\Im(\varphi )\oplus\Ker(\psi)$  et $M\simeq\Im(\varphi )$.
\item [c')]   $M$  est de type fini et pour toute application linéaire 
surjective   $\psi : A^m\rightarrow M$ il existe une application
 linéaire $\varphi : M\rightarrow A^m$  telle que  
$\psi\circ \varphi= {\rm  Id}_M$.  On a alors 
$A^m=\Im(\varphi )\oplus\Ker(\psi)$  et $M\simeq\Im(\varphi )$.
\end{enumerate}
Lorsque ces conditions sont réalisées on dit que le module $M$ est 
{\em  projectif de type fini}.
\end{propdef}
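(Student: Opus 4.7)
Mon plan est d'établir les équivalences selon le schéma : d'abord la chaîne a) $\Rightarrow$ c) $\Rightarrow$ b) $\Rightarrow$ a), puis les équivalences b) $\Leftrightarrow$ b') et c) $\Leftrightarrow$ c'), les implications b') $\Rightarrow$ b) et c') $\Rightarrow$ c) étant immédiates (il suffit de choisir un système générateur fini particulier, dont l'existence découle des autres conditions).

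Pour a) $\Rightarrow$ c), je décompose l'isomorphisme $M\oplus N\simeq A^n$ en prenant pour $\varphi$ l'injection naturelle $M\hookrightarrow M\oplus N\simeq A^n$ et pour $\psi$ la projection. Pour c) $\Rightarrow$ b), je pose $g_i=\psi(e_i)$ (où $(e_i)$ est la base canonique de $A^n$) et $\alpha_i=\pi_i\circ\varphi$ (où $\pi_i$ est la $i$-ième coordonnée) : la relation $\psi\circ\varphi=\mathrm{Id}_M$ donne immédiatement $x=\sum\alpha_i(x)g_i$, et comme les $g_i$ engendrent $M$ (car $\psi$ est surjective via $\psi\varphi=\mathrm{Id}$ après la composition), on obtient b). Pour b) $\Rightarrow$ a), je définis $\varphi:M\to A^n$ par $\varphi(x)=(\alpha_i(x))_i$ et $\psi:A^n\to M$ par $\psi(e_i)=g_i$ ; la condition $\sum\alpha_i(x)g_i=x$ signifie $\psi\varphi=\mathrm{Id}_M$, et alors la matrice $F=\varphi\psi\in\mathrm{End}(A^n)$ vérifie $F^2=\varphi\psi\varphi\psi=\varphi\psi=F$, donc $A^n=\mathrm{Im}(F)\oplus\mathrm{Ker}(F)$ avec $\mathrm{Im}(F)=\mathrm{Im}(\varphi)\simeq M$ (puisque $\varphi$ est injective à cause de $\psi\varphi=\mathrm{Id}$), ce qui fournit bien $M$ comme facteur direct de $A^n$.

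L'affirmation $A^n=\mathrm{Im}(\varphi)\oplus\mathrm{Ker}(\psi)$ dans c) se vérifie en écrivant tout $u\in A^n$ sous la forme $u=\varphi(\psi(u))+(u-\varphi(\psi(u)))$ : le premier terme est dans $\mathrm{Im}(\varphi)$ et le second est dans $\mathrm{Ker}(\psi)$ grâce à $\psi\varphi\psi=\psi$ ; l'unicité de la décomposition est immédiate. Pour b) $\Rightarrow$ b'), étant donné un autre système générateur fini $(h_j)_{j=1,\ldots,m}$, j'écris $g_i=\sum_j a_{ij}h_j$ et je pose $\beta_j(x)=\sum_i a_{ij}\alpha_i(x)$, ce qui fournit bien la décomposition $x=\sum_j\beta_j(x)h_j$.

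Pour c) $\Rightarrow$ c'), étant donné une surjection quelconque $\psi:A^m\to M$ et une solution $\varphi_0:M\to A^n$, $\psi_0:A^n\to M$ de c), je relève chaque $\psi_0(e_i^{(n)})\in M$ en un élément $u_i\in A^m$ via la surjection $\psi$, ce qui définit une application linéaire $U:A^n\to A^m$ telle que $\psi\circ U=\psi_0$ ; alors $\varphi=U\circ\varphi_0:M\to A^m$ vérifie $\psi\varphi=\psi U\varphi_0=\psi_0\varphi_0=\mathrm{Id}_M$. Le point délicat à mentionner ici est le caractère constructif de ce relèvement : la surjectivité de $\psi$ doit être donnée explicitement (avec une méthode de relèvement des éléments de $M$ vers $A^m$), ce qui est conforme à l'esprit constructif adopté dans l'article. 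Pour l'essentiel, la seule subtilité véritable dans toute la démonstration est l'astuce clé $F=\varphi\psi$ est idempotent, qui relie directement la notion projective au thème central du papier.
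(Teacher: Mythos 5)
Votre démonstration est correcte et suit pour l'essentiel la même voie que celle de l'article : mêmes identifications $g_i=\psi(e_i)$, $\alpha_i=\pi_i\circ\varphi$ (le papier dit simplement que (b) et (b') sont des reformulations de (c) et (c')), même idempotent $\theta=\varphi\circ\psi$ pour obtenir le facteur direct, et même argument de réécriture des $g_i$ en fonction des $h_j$ pour (b)$\Rightarrow$(b'). Vous explicitez en plus le relèvement pour (c)$\Rightarrow$(c') et la décomposition $A^n=\Im(\varphi)\oplus\Ker(\psi)$, détails que l'article laisse implicites.
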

%------------------------------
%----begin{proof------------------
\begin{proof} Le point (b) (resp (b')) n'est qu'une reformulation du point (c) (resp. (c')).

(a) $\Rightarrow $ (c): considérer les applications canoniques 
$M\rightarrow M\oplus N$ et  $M\oplus N\rightarrow M$.

(c) $\Rightarrow $ (a): considérer $\theta=\varphi\circ\psi$. On a 
$\theta^2=\theta$. Cela fournit la projection de $A^n$ sur $M$ 
parallèlement~à~$N$.

(b) $\Rightarrow $ (b'): en exprimant les $g_i$ comme combinaisons 
linéaires des $h_j$ on obtient les $\beta_j$~à partir des~$\alpha_i$.
\end{proof}
%----end{proof------------------

Une matrice de projection est une matrice carrée $F$ vérifiant $F^2=F$.
En pratique, conformément au (a) ci-dessus, nous considèrerons un 
module projectif de type fini comme (copie par isomorphisme de l') image 
d'une matrice de projection $F$.

Lorsqu'on voit un module projectif de type fini selon la définition (c), 
la matrice de projection est celle de l'application linéaire 
$\varphi\circ\psi$. De même, si on utilise la définition (b) la 
matrice de projection est celle ayant pour entrées les $\alpha_j (g_i)$ 
en position $(i,j)$.

\smallskip Si~$A$ est un anneau intègre, on obtient par passage au corps 
des fractions un espace vectoriel de dimension finie $k$. On en déduit 
que le polynôme caractéristique de la matrice $F$  est égal~à 
\hbox{$(X-1)^kX^{n-k}$} (nous considérons le polynôme caractéristique comme 
polynôme unitaire: $\det (X\In -F)$). Ceci caractérise en termes 
purement calculatoires la dimension $k$: le premier monôme non nul du 
polynôme caractéristique (en partant des bas degrés) est égal~à 
$(-1)^kX^{n-k}$. En outre tous les mineurs d'ordre~$k+1$ de $F$ sont nuls.

Ceci conduit~à la proposition suivante.
%--- Prop{ptf rang constant}-----------
\begin{proposition}  \label{prop ptfrangconstant} 
Soit $k$ un entier naturel et $M$ un module projectif de type fini sur un 
anneau~$A$ non trivial. Alors les conditions suivantes sont 
équivalentes:
\begin{enumerate}
\item [a)] Pour tout idéal premier $\cp$ de~$A$, le module $M/\cp M$ sur 
l'anneau intègre $A/\cp$ est de rang $k$ (i.e. tout système de $k+1$ 
éléments est linéairement dépendant et il existe un système de 
$k$ éléments linéairement indépendant).
\item [a')] Pour tout idéal premier $\cp$ de~$A$, l'espace vectoriel obtenu~à 
partir de $M$ en étendant les scalaires au corps des fractions de 
$A/\cp$ est de dimension $k$.
\item [b)] Le polynôme caractéristique d'une matrice de projection $F$ de type 
$n\times n$ ayant pour image (un module isomorphe~à) $M$ est égal,~à 
des nilpotents près, au polynôme $(X-1)^kX^{n-k}$.
\item [b')] Même chose que (b), mais pour toute matrice $F$.
\item [c)] Le polynôme caractéristique d'une matrice de projection $F$ de type 
$n\times n$ ayant pour image (un module isomorphe~à) $M$ est égal,~à 
des nilpotents près, au polynôme $(X-1)^kX^{n-k}$, et tous les mineurs  
d'ordre $k+1$ de $F$ sont nilpotents.
\item [c')] Même chose que (c), mais pour toute matrice $F$.
\end{enumerate}
\end{proposition}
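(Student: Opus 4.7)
Mon plan est d'organiser la démonstration autour de l'équivalence centrale (a) $\Leftrightarrow$ (b') $\Leftrightarrow$ (c'), dont les versions existentielles (b) et (c) découleront trivialement, et de régler d'emblée (a) $\Leftrightarrow$ (a').

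L'équivalence (a) $\Leftrightarrow$ (a') est immédiate : sur l'anneau intègre $A/\cp$ de corps des fractions $K_\cp$, le nombre maximal d'éléments linéairement indépendants dans un module de type fini $N$ coïncide avec $\dim_{K_\cp}(N \otimes_{A/\cp} K_\cp)$. Appliquée à $N = M/\cp M$, cette remarque donne le résultat.

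Pour l'équivalence principale, je fixe une matrice de projection $F$ de type $n \times n$ dont l'image est isomorphe à $M$. Par réduction modulo un idéal premier $\cp$, on obtient une matrice de projection $\bar F$ sur l'anneau intègre $A/\cp$ dont l'image est (isomorphe à) $M/\cp M$. La discussion précédant l'énoncé montre que sur un anneau intègre, une matrice de projection de rang $r$ a pour polynôme caractéristique exactement $(X-1)^r X^{n-r}$, et tous ses mineurs d'ordre $r+1$ sont nuls. Par conséquent, si (a) est vérifié, alors pour tout idéal premier $\cp$ le polynôme $\det(X\In - F) - (X-1)^k X^{n-k}$ a tous ses coefficients dans $\cp$, et de même pour tout mineur d'ordre $k+1$ de $F$. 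Puisque le nilradical de $A$ est l'intersection de tous les idéaux premiers, ces coefficients et mineurs sont nilpotents, ce qui donne (c'), et \emph{a fortiori} (b'). Réciproquement, sous (b'), la réduction modulo un premier quelconque $\cp$ force $\bar F$ à avoir pour polynôme caractéristique $(X-1)^k X^{n-k}$ sur l'anneau intègre $A/\cp$, d'où $M/\cp M$ est de rang $k$, c'est-à-dire (a).

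Les implications restantes (b') $\Rightarrow$ (b), (c') $\Rightarrow$ (c), (c) $\Rightarrow$ (b) et (c') $\Rightarrow$ (b') sont triviales, tandis que les réciproques (b) $\Rightarrow$ (b') et (c) $\Rightarrow$ (c') passent par (a), laquelle ne fait référence à aucune matrice particulière représentant $M$. Le principal obstacle, et l'unique ingrédient non constructif, est l'identification classique du nilradical avec l'intersection de tous les idéaux premiers, invoquée chaque fois que l'on passe de \og annulation modulo tout premier \fg\ à \og nilpotent \fg. C'est précisément le principe local-global abstrait dont l'auteur développe l'interprétation constructive à la section \ref{sec local-global}, et que la suite de l'article est conçue pour justifier.
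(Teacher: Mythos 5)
Votre démonstration est correcte et suit essentiellement la même voie que l'article : réduction modulo chaque idéal premier (où, sur l'anneau intègre $A/\cp$, le polynôme caractéristique d'un projecteur d'image de rang $r$ vaut exactement $(X-1)^rX^{n-r}$ et les mineurs d'ordre $r+1$ sont nuls), puis identification du nilradical avec l'intersection des idéaux premiers — c'est précisément l'argument que l'article qualifie d'immédiat et dont vous explicitez les détails. Seule nuance : pour (b) $\Rightarrow$ (b') vous passez par (a), ce qui est correct classiquement, tandis que l'article signale en complément que l'équivalence (b) $\Leftrightarrow$ (b') admet une démonstration constructive directe via le lemme \ref{lem polycar ptf}.
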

%------------------------------
%----begin{proof------------------
\begin{proof} D'un point de vue classique, la démonstration est 
immédiate~; il suffit de se rappeler que l'intersection des idéaux 
premiers est le nilradical de~$A$, i.e. l'ensemble des nilpotents. 
\\
Notez que d'un point de vue constructif, la condition (a) est a priori 
trop faible (par manque d'idéaux premiers), et les conditions (b) et (c) 
ne sont pas clairement équivalentes. 
\\
Une démonstration constructive de l'équivalence de (b) et (b') est une 
conséquence le lemme qui suit. 
\end{proof}
%----end{proof------------------

%--- Lemma{polycar ptf}----------------
\begin{lemma} 
\label{lem polycar ptf}
Soient $F_1$ de type $m\times m$ et $F_2$ de type $n\times n$  deux
matrices de projection avec des images isomorphes. Alors on a
%-----------------begin $$------------------
$$X^n\det(X\I_m-F_1)=X^m\det(X\I_n-F_2).
$$
%-----------------end $$------------------
\end{lemma}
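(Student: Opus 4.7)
Mon approche repose sur l'identité classique reliant les polynômes caractéristiques de $UV$ et $VU$ pour des matrices rectangulaires: si $U$ est $n\times m$ et $V$ est $m\times n$~à coefficients dans un anneau commutatif, alors
$$X^n\det(X\I_m-VU)\;=\;X^m\det(X\I_n-UV).$$
Cette identité se démontre en calculant de deux manières le déterminant de la matrice bloc $\left(\begin{smallmatrix} X\I_m & V \\ U & \I_n\end{smallmatrix}\right)$ (par exemple en la multipliant~à droite par $\left(\begin{smallmatrix} \I_m & -V \\ 0 & X\I_n\end{smallmatrix}\right)$, puis ~à gauche par $\left(\begin{smallmatrix} \I_m & 0 \\ -U & X\I_n\end{smallmatrix}\right)$, pour aboutir~à deux matrices triangulaires par blocs avec complément de Schur $X\I_m-VU$ d'un côté et $X\I_n-UV$ de l'autre).

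Il reste alors~à construire $U$ et $V$ de sorte que $VU=F_1$ et $UV=F_2$. Pour cela, je note $\pi_1:A^m\to \Im(F_1)$ et $\pi_2:A^n\to \Im(F_2)$ les surjections induites par les projecteurs $F_1$ et $F_2$, et $\iota_1:\Im(F_1)\to A^m$, $\iota_2:\Im(F_2)\to A^n$ les inclusions. L'idempotence donne $\pi_k\iota_k=\Id_{\Im(F_k)}$ et $\iota_k\pi_k=F_k$. Soit $\varphi:\Im(F_1)\to \Im(F_2)$ un isomorphisme fixé d'inverse $\varphi^{-1}$. Posons
$$V\;=\;\iota_1\circ\varphi^{-1}\circ\pi_2 : A^n\to A^m, \qquad U\;=\;\iota_2\circ\varphi\circ\pi_1 : A^m\to A^n.$$
Ce sont des applications $A$-linéaires entre modules libres, donc représentables par des matrices de tailles respectives $m\times n$ et $n\times m$. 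On vérifie immédiatement
$$VU\;=\;\iota_1\varphi^{-1}(\pi_2\iota_2)\varphi\pi_1\;=\;\iota_1\varphi^{-1}\varphi\pi_1\;=\;\iota_1\pi_1\;=\;F_1,$$
et symétriquement $UV=F_2$.

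La conclusion est alors directe en appliquant l'identité rappelée ci-dessus. Le seul point où il faut être attentif est que $\Im(F_1)$ et $\Im(F_2)$ ne sont pas supposés libres~; mais cela ne gêne pas, puisque les composées $U=\iota_2\varphi\pi_1$ et $V=\iota_1\varphi^{-1}\pi_2$ sont des morphismes entre modules libres, et la seule chose dont on a besoin d'elles est qu'elles soient représentables par des matrices~à coefficients dans $A$. L'unique difficulté de rédaction me semble donc être de présenter proprement la factorisation de $F_1$ et $F_2$ via l'isomorphisme $\varphi$~; il n'y a pas d'obstacle technique sérieux.
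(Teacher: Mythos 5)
Votre démonstration est correcte, mais elle emprunte une route réellement différente de celle de l'article. Vous factorisez $F_1=VU$ et $F_2=UV$ à travers l'isomorphisme $\varphi:\Im(F_1)\to\Im(F_2)$ --- les vérifications $VU=F_1$ et $UV=F_2$ sont exactes, et $U$, $V$ sont bien représentables par des matrices puisque ce sont des applications linéaires entre les modules libres $A^m$ et $A^n$, sans aucune hypothèse de liberté sur les images --- puis vous concluez par l'identité $X^n\det(X\I_m-VU)=X^m\det(X\I_n-UV)$, valable sur tout anneau commutatif. L'article procède autrement: il écrit $A^m\simeq M\oplus N_1$ et $A^n\simeq M\oplus N_2$, considère l'endomorphisme de $A^{m+n}\simeq M\oplus N_2\oplus M\oplus N_1$ égal à l'identité sur une copie de $M$ et à $0$ sur les trois autres composantes, et le lit dans deux regroupements de la somme directe; on obtient deux matrices semblables, diagonales par blocs ($F_1$ complétée par $0_n$, et $F_2$ complétée par $0_m$), dont les polynômes caractéristiques sont précisément les deux membres de l'égalité. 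Votre voie achète la conclusion au prix d'un lemme déterminantal supplémentaire (qu'il faut établir par opérations par blocs, donc sans inverser de bloc, pour qu'il vaille sur un anneau arbitraire --- ce qui reste parfaitement constructif), tandis que celle de l'article n'utilise que le fait que deux matrices semblables ont même polynôme caractéristique. Signalons un lapsus dans votre parenthèse: les deux multiplications que vous indiquez produisent toutes deux le complément de Schur $X\I_n-UV$, jamais $X\I_m-VU$; pour obtenir ce dernier, multipliez par exemple à droite par $\pmatrix{\I_m & 0_{m,n}\cr -U & \In}$, de déterminant $1$, ce qui donne $\det\pmatrix{X\I_m & V\cr U & \In}=\det(X\I_m-VU)$, à combiner avec votre premier calcul qui fournit $X^n\det\pmatrix{X\I_m & V\cr U & \In}=X^m\det(X\I_n-UV)$. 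C'est une retouche immédiate, pas une lacune de fond.
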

%--- end-lemma-----------------------------------------
%-----------------begin proof------------------
\begin{proof}
On écrit $A^m\simeq M\oplus N_1$ et $A^n\simeq M\oplus N_2$ de sorte que 
$A^{n+m}\simeq M\oplus N_2\oplus M\oplus N_1$ on considère 
l'endomorphisme $f$ de $A^{n+m}$ qui est égal~à l'identité sur une 
composante 
$M$ et~à $0$ sur les trois autres composantes. Selon la manière
dont on regroupe les termes de la somme directe on trouve pour $f$
une ou l'autre des matrices
%--------------------begin pmatrix---------------
$$\pmatrix{ 
    F_1    &  0_{m,n}      \cr 
   0_{n,m} &  0_{n}     
}
\quad {\rm  ou} \quad 
\pmatrix{ 
    0_{m}    &    0_{m,n}      \cr 
    0_{n,m}  &    F_2     
}
$$
%---------------------end pmatrix--------------
qui ont pour polynômes caractéristiques les deux membres de 
l'égalité~à démontrer.
\end{proof}
%-----------------end proof------------------
Ceci justifie constructivement la définition suivante.

%--- Definition{ptfrangconstant}-------
\begin{definition} 
\label{def ptfrangconstant} 
Un module $M$ projectif de type fini sur un anneau~$A$ non trivial est dit 
{\sl  de rang constant égal~à $k$} lorsque la condition (b') de la 
proposition \ref{prop ptfrangconstant} est réalisée: le polynôme 
caractéristique d'une matrice de projection $F$ de type $n\times n$ 
ayant pour image (un module isomorphe~à) $M$ est égal,~à des 
nilpotents près, au polynôme $(X-1)^kX^{n-k}$.
\end{definition}
%---end-definition-----------------------------------------

En fait, nous verrons plus loin  des caractérisations plus agréables 
des modules projectifs de rang constant. Notamment, on peut ``supprimer 
les nilpotents'' dans les conditions (b)--(c'). 
%--- Remark{det ptf}-------------------
\begin{remark} 
\label{rem det ptf}
{\rm Avec une démonstration tout~à fait analogue~à celle du lemme \ref{lem 
polycar ptf}, on peut démontrer que le déterminant (et donc aussi le 
polynôme caractéristique) d'un endomorphisme d'un module projectif de 
type fini est bien défini{\footnote{~Bien que la démonstration du lemme 
\ref{lem polycar ptf} soit convaincante, il peut sembler un peu choquant 
que le déterminant d'un endomorphisme puisse être bien défini 
lorsque le rang du module lui-même n'est pas bien défini. 
Intuitivement, cela se passe comme suit: lorsque l'on décompose le module 
selon ses composantes équidimensionelles, chaque composante de 
l'endomorphisme a clairement un déterminant, et les déterminants en 
chaque dimension sont mis ensemble (via les idempotents correspondant aux 
composantes) pour former un déterminant global.}}. On peut alors lire la 
condition (b) comme signifiant que le polynôme caractéristique de 
l'endomorphisme $\Id_M$ est égal,~à des nilpotents près,~à 
$(X-1)^k$. 
} 
\end{remark}
%--- end-remark-----------------------------------------

%--- Convention{rangconstant}----------
\begin{convention} \label{conven rgc}
Lorsque l'anneau~$A$ est trivial (réduit~à $\{0\}$) tous les 
$A$-modules sont triviaux. Néanmoins, conformément~à la définition 
ci-dessus, il est logique de considérer que le module trivial est 
projectif de type fini de rang constant égal~à $k$, pour n'importe 
quelle valeur de l'entier $k\geq 0$. Cette convention permet une 
formulation plus uniforme des théorèmes et des démonstrations.
\end{convention}
%------------------------------
%--- Defi{anneau local}----------------
\begin{definition} \label{def anneau local}
%------------------------------
Un {\sl  anneau local} est un anneau où est vérifié l'axiome 
suivant:
%----begin $$------------------
$$\forall x\in A \qquad x \;\; {\rm  ou} \;\;  1-x\;\; {\rm est \; 
inversible. } $$
%----end $$------------------
\end{definition} 
%------------------------------

Notez que selon cette définition l'anneau trivial est local. Dans un 
anneau local, les éléments ``non inversibles''
 (ceux pour lesquels l'hypothèse d'inversibilité implique $1=0$ dans 
l'anneau~$A$) forment un idéal. Le quotient de l'anneau par cet idéal 
est un corps, appelé corps résiduel de l'anneau~$A$ (nous admettons 
l'anneau trivial comme corps). 
%--- Defi{discret}---------------------
\begin{definition} \label{def discret} Un ensemble~$A$ muni d'une relation 
d'égalité est appelé {\sl  discret} lorsque l'axiome suivant est 
vérifié$$\forall x,y\in A \qquad x=y \;\; {\rm  ou} \;\;
  \lnot (x=y). $$
\end{definition}
%------------------------------
%--- Comment{discret}------------------
\begin{comment} 
\label{comment discret}
{\rm  Classiquement, tous les ensembles sont discrets, car le ``ou'' 
présent dans la définition est compris de manière ``abstraite''. 
Constructivement, le ``ou'' présent dans la définition est compris 
selon la signification du langage usuel: une des deux alternatives au 
moins doit avoir lieu de manière certaine. Il s'agit donc d'un ``ou'' de 
nature algorithmique. Un ensemble est discret si on~a un test pour 
l'égalité de deux éléments arbitraires de cet ensemble. 
Constructivement l'ensemble des nombres réels n'est pas discret (plus 
précisément: le supposer discret impliquerait un principe 
d'omniscience qui n'est pas accepté constructivement, même si on ne 
peut prouver qu'un tel principe est absurde).\\
Le corps résiduel d'un anneau local est discret si et seulement si il y 
a un test d'inversibilité pour les éléments de~$A$. On dit dans ce 
cas que le groupe des unités $A^{\times}$ est une {\sl  partie 
détachable} de~$A$.
} 
\end{comment}
%---end-comment-----------------------------------------

Rappelons que deux matrices carrées $m\times m$ sont dites {\sl  
semblables} lorsqu'elles représentent le même endomorphisme de $A^m$ 
sur deux bases distinctes (ou non). 

Nous donnons maintenant trois démonstrations différentes pour un lemme 
fondamental, que nous appelons lemme de la liberté locale.

%--- Lemme de la liberté locale -----
\mni {\bf  Lemme de la liberté locale.}   \label{lelilo}
{\sl  
Soit~$A$ un anneau local. Tout module projectif de type fini sur~$A$ est 
libre. De manière équivalente: toute matrice de projection $F$ de 
type $n\times n$ est semblable~à une {\em matrice de projection 
standard}, c.-à-d. de la forme:}
%--------------------begin{array---------------
$$\I_{k,n,n}=\pmatrix{ 
\I_k&   &0_{k,n-n}  \cr 
0_{n-k,k}&   &0_{n-k}      
}.$$
%---------------------end{array--------------
%----fin du Lemme de la liberté locale ---
%----begin{proof------------------
\begin{proof}[Première démonstration (classique usuelle)] Cette 
démonstration suppose que le corps résiduel est discret. À fortiori, on sait si 
l'anneau est trivial ou non. Si l'anneau est trivial, c'est clair. Si 
l'anneau est non trivial et si le corps résiduel est discret cela va 
aussi, en suivant la démonstration classique usuelle. Notons $\varphi 
:A^n\rightarrow A^n$ la projection de matrice $F$. On passe au corps 
résiduel, la matrice $F$ est alors la matrice de la projection sur le 
sous espace 
${\rm  Im}(\varphi )$ parallèlement au sous espace  ${\rm  Im}({\rm  Id} 
-\varphi )$. On considère alors un mineur résiduellement non nul 
d'ordre maximum $k$ dans $F$, et de même  un mineur résiduellement 
non nul d'ordre maximum $n-k$ dans $\I_n-F$. En mettant cote~à cote les 
$k$ colonnes de $F$ et les $n-k$ colonnes de $\I_n-F$ correspondant~à 
ces mineurs, on obtient une matrice~$Q$ qui est résiduellement 
inversible, donc inversible (car son déterminant est inversible). La 
matrice $G=QFQ^{-1}$ représente l'application linéaire $\varphi $ sur 
une nouvelle base dont les $k$ premiers vecteurs sont dans  ${\rm  
Im}(\varphi )$ et les $n-k$ derniers sont dans 
${\rm  Im}({\rm  Id} -\varphi )$. Puisque $\varphi^2=\varphi $ ceci 
implique que~$G$ est la matrice de projection standard sur le sous espace 
des $k$ premiers vecteurs de base parallèlement au sous espace des $n-k$ 
derniers.
\end{proof}
%----end{proof------------------
%-----------------begin proof------------------
\begin{proof}[Deuxième démonstration  (par la platitude)] Cette 
démonstration suppose aussi que le corps résiduel est discret. C'est une démonstration 
un peu plus ``calculatoire'', qui sera plus facile~à utiliser dans la 
section~\ref{sec iclg2}. Nous l'avons extraite de la démonstration classique qui 
démontre d'abord qu'un module projectif est plat, puis qu'un module plat 
de présentation finie sur un anneau local est libre.
\\
Tout d'abord, nous établissons le lemme suivant:
%--- Lemme de la presentation locale --
\begin{lemma}[lemme de la présentation locale] 
\label{lem prf local} %{\rm  (Lemme de la présentation locale)} 
Soit  $A$ un anneau local dont le corps résiduel est discret. Une 
matrice $G$ de type $q\times m$~à coefficients dans~$A$ est 
équivalente (sur~$A$)~à une matrice:
%--------------------begin pmatrix---------------
$$\pmatrix{ 
    \I_{k}   &0_{k,m-k}      \cr 
    0_{q-k,k}&     G'      
}$$
%---------------------end pmatrix--------------
où $G'$ a tous ses coefficients dans l'idéal maximal de~$A$.
\\
Tout module de présentation finie sur~$A$ peut être présenté par 
une matrice $G'$ de ce type.
\end{lemma}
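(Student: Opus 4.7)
Le plan est de procéder par récurrence sur $\min(q,m)$, en imitant le pivot de Gauss. L'hypothèse que le corps résiduel est discret fournit, pour chaque coefficient $g_{i,j}$, un test effectif : soit $g_{i,j}$ est inversible dans~$A$, soit il appartient~à l'idéal maximal (cf.~commentaire~\ref{comment discret}). C'est précisément cette détachabilité de $A^{\times}$ qui rend l'algorithme effectif, les éléments non inversibles jouant dans la réduction le rôle des zéros du cas classique sur un corps.

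Si tous les coefficients de $G$ appartiennent~à l'idéal maximal, on prend $k=0$ et $G'=G$, et la matrice a déjà la forme voulue. Sinon, on repère un coefficient $g_{i,j}$ inversible. Par permutations de lignes et de colonnes (opérations admissibles de la liste donnée au~\S\ref{subsec prf}), on l'amène en position $(1,1)$ ; on multiplie alors la première ligne par $g_{i,j}^{-1}$ pour normaliser le pivot~à $1$, puis on annule toutes les autres entrées de la première ligne et de la première colonne par ajout aux autres lignes et colonnes de multiples convenables de la première. On aboutit ainsi~à une matrice équivalente~à $G$ de la forme
$$\pmatrix{1 & 0_{1,m-1} \cr 0_{q-1,1} & H}$$
où $H$ est de type $(q-1)\times(m-1)$. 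L'hypothèse de récurrence appliquée~à $H$ fournit la décomposition annoncée avec un entier $k-1$, et la récurrence termine puisque la taille de la matrice décroît strictement~à chaque étape ; le recollement des matrices de passage (complétées par un bloc $1\times 1$ dans le coin supérieur gauche) donne le résultat pour $G$.

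Pour la seconde assertion, on part d'une présentation quelconque du module $M$, on lui applique ce qui précède, et on observe que les $k$ premières colonnes de la matrice obtenue expriment simplement la nullité (modulo les relations) des $k$ premiers générateurs ; on peut donc éliminer ces générateurs et les relations correspondantes par les opérations inverses décrites au~\S\ref{subsec prf} (suppression successive de colonnes contenant un $1$ isolé), obtenant une présentation du même module par la seule matrice $G'$~à coefficients dans l'idéal maximal.

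La principale subtilité, et le seul endroit où le caractère constructif est en jeu, consiste~à s'assurer~à chaque étape de la décidabilité effective de l'alternative « inversible ou non inversible » pour un élément de~$A$ ; sans l'hypothèse que le corps résiduel est discret (c'est-à-dire la détachabilité de $A^{\times}$), on ne pourrait pas démarrer la récurrence. Tout le reste de la démonstration n'est qu'une manipulation algorithmique de matrices analogue au pivot de Gauss usuel.
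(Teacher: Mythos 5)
Votre démonstration est correcte, et elle suit une route légèrement différente de celle du texte. Le papier se contente de dire qu'on recopie, mutatis mutandis, la démonstration du lemme de la liberté~: on repère d'abord l'entier $k$ maximal tel qu'il existe un mineur d'ordre $k$ inversible (tous les mineurs d'ordre $k+1$ étant non inversibles — repérage qui exige déjà le test d'inversibilité, donc le corps résiduel discret), on amène ce mineur en haut à gauche, on fait la réduction par blocs, et l'on conclut que $G'$ a ses coefficients dans l'idéal maximal parce que ses coefficients sont, à unité près, des mineurs d'ordre $k+1$ de la matrice réduite, donc non inversibles (invariance de l'idéal déterminantiel par équivalence). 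Vous procédez au contraire par pivot de Gauss, un coefficient inversible à la fois, avec récurrence sur $\min(q,m)$~: cela évite toute considération de mineurs et d'idéaux déterminantiels, l'entier $k$ étant produit incrémentalement, et c'est sans doute la version la plus directement algorithmique — parfaitement suffisante pour l'usage qui en est fait dans la deuxième démonstration du lemme de la liberté locale. Ce que la version du papier conserve en plus, c'est le lien explicite entre $k$ et les mineurs de la matrice $G$ de départ, dans l'esprit du lemme de la liberté et du théorème~\ref{th matproj}. Dans les deux cas, le seul point constructif délicat est bien celui que vous identifiez (détachabilité de $A^{\times}$), et votre traitement de la seconde assertion — suppression des $k$ générateurs devenus nuls par les transformations admissibles de la section~\ref{subsec prf} — explicite ce que le papier laisse implicite.
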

%--- end-lemma-----------------------------------------
%-----------------begin proof------------------
\begin{proof}[Démonstration du lemme] On recopie, mutatis mutandis, la démonstration du 
lemme de la liberté. Notez que les matrices de passage $P$ et $Q$ se 
calculent explicitement~à partir de $G$ une fois qu'on a repéré un 
mineur d'ordre $k$ inversible, tous les mineurs d'ordre $k+1$ étant non 
inversibles.
\end{proof}
%-----------------end proof------------------
En appliquant le lemme précédent, on obtient un entier $k$, des 
matrices $P$, $Q$, $P_1$, $Q_1$ inversibles et~$H$ résiduellement nulle, 
avec
%--------------------begin pmatrix---------------
$$PFQ\;=\;
\pmatrix{ 
    \I_{k}   &0_{k,n-k}      \cr 
    0_{n-k,k}&     H      }\;,\quad QQ_1=\In\;,\quad PP_1=\In.
$$
%---------------------end pmatrix--------------
On a 
%-----------------begin $$------------------
$$(PFQ)(Q_1P_1)(PFQ)=(PF^2Q)=(PFQ),
$$
%-----------------end $$------------------
ce qui se réécrit, avec $(Q_1P_1)$ décomposée en blocs:
%-----------------begin $$------------------
$$\pmatrix{\I_{k}&0_{k,n-k}\cr 0_{n-k,k}&H}
\pmatrix{B&C\cr D&E}
\pmatrix{\I_{k}&0_{k,n-k}\cr 0_{n-k,k}&H}
=\pmatrix{\I_{k}&0_{k,n-k}\cr 0_{n-k,k}&H},
$$
%-----------------end $$------------------
c.-à-d., tous calculs faits
%-----------------begin $$------------------
$$\pmatrix{B&CH\cr HD&HEH}
=\pmatrix{\I_{k}&0_{k,n-k}\cr 0_{n-k,k}&H},
$$
%-----------------end $$------------------
Ainsi $B=\I_k$ et $HEH=H$, donc $(\I_{n-k}-HE)H=0_{n-k}$. Mais $HE$ a ses 
coefficeints dans l'idéal maximal, donc $\det(\I_{n-k}-HE)=1+j$ avec $j$ 
dans l'idéal maximal est inversible. Donc $(\I_{n-k}-HE)$ est 
inversible, et $H=0_{n-k}$. Ceci implique que l'image de $F$ est un module 
libre de rang $k$ puisque
%-----------------begin $$------------------
$$F\;=\;P_1\;
\pmatrix{ 
    \I_{k}   &0_{k,n-k}      \cr 
    0_{n-k,k}&   0_{n-k}      }\;Q_1.
$$
%-----------------end $$------------------ 
En fait, on a même
%-----------------begin $$------------------
$$ PFP^{-1}=PFP_1=PFQQ_1P_1=\pmatrix{ 
    \I_{k}   &0_{k,n-k}      \cr 
    0_{n-k,k}&     0_{n-k}      }
\pmatrix{ 
    \I_{k}   &C      \cr 
    D&     E      }=
\pmatrix{ 
    \I_{k}   &      C      \cr 
    0_{n-k,k}&     0_{n-k}      },
$$
%-----------------end $$------------------ 
et donc en posant
%-----------------begin $$------------------
$$R:=\pmatrix{ 
    \I_{k}   &      C      \cr 
    0_{n-k,k}&     I_{n-k}      },
$$
%-----------------end $$------------------ 
on obtient
%-----------------begin $$------------------
$$R^{-1}= \pmatrix{ 
    \I_{k}   &      -C      \cr 
    0_{n-k,k}&     I_{n-k}      } \quad {\rm  et } \quad
(RP)F(RP)^{-1}=\pmatrix{ 
    \I_{k}   &0_{k,n-k}      \cr 
    0_{n-k,k}&     0_{n-k}      }.
$$
%-----------------end $$------------------
\end{proof}
%-----------------end proof------------------
%----begin{proof------------------
\begin{proof}[Troisième démonstration  (à la Azuyama)] Cette démonstration 
ne suppose pas le corps résiduel discret. Elle est la traduction 
matricielle de la démonstration du théorème d'Azuyama (Theorem III.6.2 dans 
\cite{MRR}), pour le cas qui nous occupe ici. Nous allons diagonaliser la 
matrice~$F$. La démonstration fonctionne avec un anneau local non 
nécessairement commutatif.\\ 
Appelons $f_1$ le vecteur colonne $f_{1..n,1}$ de la matrice $F$, et 
$e_1,\ldots ,e_n$ la base canonique de $A^n$.\\
\noindent -- Premier cas, $f_{1,1}$ est inversible. Alors $f_1,e_2,\ldots 
,e_n$ est une base de $A^n$. Par rapport~à cette base $\varphi$ a une 
matrice:
%-----------------begin $$------------------
$$G:=\pmatrix{ 
    1   &      li      \cr 
    0_{n-1,1}&   F_1      }.$$
%-----------------end $$------------------ 
En écrivant $G^2=G$ on obient $F_1^2=F_1$ et $F_1li=0$.
On a alors:
%-----------------begin $$------------------
$$LGL^{-1}:=\pmatrix{ 
    1   &      li      \cr 
    0_{n-1,1}&   \I_{n-1}      }
\pmatrix{ 
    1   &      li      \cr 
    0_{n-1,1}&   F_1      }
\pmatrix{ 
    1   &      -li      \cr 
    0_{n-1,1}&   \I_{n-1}      }= 
\pmatrix{ 
    1   &      0_{1,n-1}      \cr 
    0_{n-1,1}&   F_1      }.
$$
%-----------------end $$------------------ 

\noindent -- Deuxième cas, $1-f_{1,1}$ est inversible. Alors 
$e_1-f_1,e_2,\ldots ,e_n$ est une base de $A^n$. Par rapport~à cette 
base, $\Id_n-\varphi$ a une matrice:
%-----------------begin $$------------------
$$G:=\pmatrix{ 
    1   &      li      \cr 
    0_{n-1,1}&   F_1      }$$
%-----------------end $$------------------ 
avec $G^2=G$. Avec le même calcul que dans le cas précédent, 
$\I_n-F$ est donc semblable~à une matrice
%-----------------begin $$----------------
$$\pmatrix{ 
    1   &      0_{1,n-1}      \cr 
    0_{n-1,1}&   F_1      }
$$
%-----------------end $$------------------
avec $F_1^2=F_1$, ce qui signifie que $F$ est semblable ~à une matrice:
%-----------------begin $$----------------
$$\pmatrix{ 
    0   &      0_{1,n-1}      \cr 
    0_{n-1,1}&   H_1      }
$$
%-----------------end $$------------------
avec $H_1^2=H_1$.\\
On termine la démonstration par récurrence sur $n$.
\end{proof}
%----end{proof------------------

%--- Comment{ ptf libre}---------------
\begin{comment} 
\label{comment ptf libre}
Du point de vue classique, tous les ensembles sont discrets, et 
l'hypothèse correspondante est superflue dans les deux premières 
démonstrations.  
Nous avons signalé les trois démonstrations parce que le lemme de la liberté 
locale est un lemme crucial dans la suite, et que les différentes 
démonstrations conduisent~à différentes méthodes, plus ou moins 
compliquées, permettant de rendre constructifs les théorèmes que 
nous avons en vue. 
\end{comment}
%---end-comment-----------------------------------------

%---- SUBsection{Localisation}---------------------
\subsection{Localisation} \label{subsec locali}
%-------------------------------------------

Nous supposons la lectrice familière du processus de localisation en une 
partie multiplicative~$S$ de~$A$, ainsi qu'avec les notations $A_S$, $M_S$ 
(pour le localisé du~$A$-module $M$), et~$A_s$, $M_s$ lorsque $S$ est 
engendré par l'élément $s$ de~$A$. Nous voulons cependant garder la 
possibilité de localiser en un monoïde (multiplicatif) pouvant 
contenir 0. Le résultat est alors l'anneau trivial (et le module 
trivial). 

Des résultats essentiels sont les suivants:
%--- Fact{sexloc} ---------------------
\begin{fact} \label{fact sexloc}~
\begin{enumerate}
\item Si $M$ est un sous module de $N$, on a l'identification canonique de $M_S$ 
avec un sous module de~$N_S$ et de $(N/M)_S$ avec $N_S/M_S$.
\item Si $f:M\rightarrow N$ est une application~$A$-linéaire, 
${\rm  Im}(f_S)$  s'identifie canoniquement~à $({\rm  Im}(f))_S$, 
${\rm  Ker}(f_S)$  s'identifie canoniquement~à $({\rm  Ker}(f))_S$ et 
${\rm  Coker}(f_S)$  s'identifie canoniquement~à $({\rm  Coker}(f))_S$.
\item 
Si $$M\vers{f}N\vers{g}P$$ est une suite exacte de~$A$-modules et 
$S\subset A$ un monoïde, alors  
$$M_S\vers{f_S}N_S\vers{g_S}P_S$$ 
est une suite exacte de $A_S$-modules.   
\end{enumerate}
\end{fact}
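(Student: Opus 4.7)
La démonstration repose entièrement sur la caractérisation concrète de l'annulation dans un localisé: un élément $x/s\in M_S$ est nul si et seulement si il existe $t\in S$ tel que $tx=0$ dans $M$. C'est la définition même du localisé vu comme quotient de $M\times S$ par la relation d'équivalence appropriée; cette caractérisation reste valable lorsque $0\in S$, auquel cas $M_S$ est simplement le module trivial. Le plan est de traiter d'abord l'assertion (2) pour $\Im$ et $\Ker$, d'en déduire (1) et l'assertion (2) pour ${\rm Coker}$, puis enfin (3).

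Pour l'image, l'égalité $\Im(f_S)=(\Im f)_S$ est immédiate: tout $x/s\in M_S$ est envoyé par $f_S$ sur $f(x)/s$, et réciproquement tout élément de $(\Im f)_S$ se présente sous cette forme. Pour le noyau, l'inclusion $(\Ker f)_S\subset \Ker(f_S)$ est tout aussi immédiate par fonctorialité. Pour l'inclusion réciproque, soit $x/s\in\Ker(f_S)$; alors $f(x)/s=0$ dans $N_S$, donc il existe $t\in S$ avec $tf(x)=f(tx)=0$ dans $N$, c.-à-d. $tx\in\Ker f$; et l'on a $x/s=(tx)/(ts)$ dans $M_S$, qui provient bien de $(\Ker f)_S$.

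De ces deux points on tire le point (1): si $M$ est un sous-module de $N$, on applique ce qui précède à l'injection canonique $i:M\to N$, dont le noyau est nul, ce qui donne l'injectivité de $i_S$ et donc l'identification de $M_S$ à un sous-module de $N_S$. Pour identifier $(N/M)_S$ à $N_S/M_S$, on vérifie directement que la suite $M_S\to N_S\to (N/M)_S\to 0$ obtenue en localisant la suite exacte naturelle est exacte: la surjectivité est évidente, et l'inclusion de $M_S$ dans le noyau de la seconde flèche est claire; l'inclusion réciproque s'obtient en observant que si $y/s\in N_S$ s'envoie sur $0$ dans $(N/M)_S$, alors l'image de $y$ dans $N/M$ est annulée par un certain $t\in S$, i.e. $ty\in M$, et $y/s=(ty)/(ts)$ est bien dans l'image de $M_S$. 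Ceci donne simultanément l'identification voulue sur ${\rm Coker}$ pour la flèche $i$, et le cas général du conoyau se traite de manière analogue en factorisant $f$ par son image.

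Enfin, le point (3) est alors une conséquence purement formelle: si $g\circ f=0$ et $\Ker g=\Im f$, alors $g_S\circ f_S=(g\circ f)_S=0$ et $\Ker(g_S)=(\Ker g)_S=(\Im f)_S=\Im(f_S)$ grâce aux identifications déjà établies. Le seul point qui demande un peu d'attention — et qui constitue l'unique ingrédient non formel de la démonstration — est de bien utiliser à chaque étape le dénominateur $t$ témoignant de la nullité, que ce soit pour passer de $\Ker(f_S)$ à $(\Ker f)_S$ ou pour l'exactitude en $N_S$ ci-dessus; tout le reste n'est que fonctorialité.
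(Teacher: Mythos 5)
Your argument is correct, and it is the standard one: the paper states this Fait in the ``Rappels'' section without any proof (it is recalled as a known result), so there is nothing to compare it against; your reduction of everything to the criterion that $x/s=0$ in $M_S$ exactly when $tx=0$ for some $t\in S$ is exactly how this is normally established. The only point to watch is the order of the steps — the identification of $(\mathrm{Im}\,f)_S$ with a submodule of $N_S$ already uses the injectivity statement of point (1), which itself comes from your kernel computation, so the kernel case must (and in your plan does) come first; with that ordering the proof is complete.
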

%------------------------------
%--- Fact{hom egaux apres loc} --------
\begin{fact} 
\label{fact hom egaux}
Soit $f:M\rightarrow N$, $g:M\rightarrow N$ deux applications linéaires 
entre~$A$-modules, {\em  avec $M$ de type fini}. Soit $S$ un monoïde 
de~$A$.  Alors $f_S=g_S$ si et seulement si il existe $s\in S$ tel que  
$sf=sg$. En d'autres termes, l'application canonique 
$(\Hom_A(M,N))_S\rightarrow \Hom_{A_S}(M_S,N_S)$ est injective.
\end{fact}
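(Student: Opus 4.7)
Le plan est de réduire le problème au cas où $g = 0$ en posant $h := f - g$, puis de démontrer l'équivalence $h_S = 0 \Longleftrightarrow \exists\, s \in S,\ sh = 0$. L'énoncé principal s'en déduit immédiatement, et l'injectivité de l'application canonique $(\Hom_A(M,N))_S \to \Hom_{A_S}(M_S, N_S)$ en sera également une conséquence directe.

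La direction facile ($\Leftarrow$) est immédiate : si $sh = 0$ avec $s \in S$, alors pour tout $m \in M$ et tout $t \in S$ on~a $h_S(m/t) = h(m)/t = sh(m)/(st) = 0$ dans $N_S$, puisque $s$ devient inversible dans $A_S$.

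La direction non triviale ($\Rightarrow$) est celle où intervient de manière essentielle l'hypothèse que $M$ est de type fini. Je commencerais par fixer un système générateur fini $m_1, \ldots, m_n$ de~$M$. L'hypothèse $h_S = 0$ appliquée~à chaque $m_i/1$ livre $h(m_i)/1 = 0$ dans $N_S$, et la définition même de l'égalité dans un localisé fournit alors, pour chaque $i$, un élément $s_i \in S$ tel que $s_i\, h(m_i) = 0$ dans~$N$. L'étape-clé consiste~à poser $s := s_1 s_2 \cdots s_n \in S$ : cet élément annule simultanément tous les $h(m_i)$, et puisque $sh$ est $A$-linéaire et que les $m_i$ engendrent~$M$, on conclut $sh = 0$ sur $M$ tout entier, soit $sf = sg$.

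Pour l'injectivité annoncée, un élément du noyau est représenté par un couple $(f,s)$ avec $f_S = 0$ comme application $M_S \to N_S$ ; la partie principale fournit alors un $t \in S$ vérifiant $tf = 0$, ce qui signifie $f/s = 0$ dans $(\Hom_A(M,N))_S$. Le seul point demandant un soin réel est l'utilisation essentielle de la finitude d'un système générateur pour former le produit $s = \prod_i s_i$ ; sans cette hypothèse l'énoncé deviendrait faux en général (par exemple pour un module libre de rang infini). Aucune obstruction plus profonde n'est~à prévoir : l'argument est entièrement élémentaire et constructif.
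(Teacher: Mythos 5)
Votre démonstration est correcte : la réduction à $h=f-g$, le choix d'un système générateur fini $m_1,\ldots,m_n$, les annulateurs $s_i\in S$ et le produit $s=s_1\cdots s_n$ constituent exactement l'argument standard attendu, et votre remarque sur la nécessité de la finitude (contre-exemple avec un module non de type fini) est pertinente. Le papier énonce d'ailleurs ce résultat comme un \emph{Fait} sans en donner de démonstration (il l'utilise ensuite tel quel dans la preuve du Fait~\ref{fact homom loc prf}), de sorte qu'il n'y a pas d'écart de méthode à signaler.
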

%--- end-fact--------------------------
%--- Fact{homom loc prf} --------------
\begin{fact} \label{fact homom loc prf} Soient $M$ et $N$ deux~$A$-modules, 
$S$ un  monoïde de~$A$ et $\varphi:M_S\rightarrow N_S$ une 
application $A_S$-linéaire. On suppose que  $M$ est de présentation finie. 

\ni Alors il existe une application~$A$-linéaire $\phi:M\rightarrow N$ 
et $s\in S$ tels que
%----begin $$------------------
$$\forall x\in M\quad  \varphi \Big({x\over 1}\Big) = {\phi(x)\over s} \;.$$
%----end $$------------------
En d'autres termes, l'application canonique $(\Hom_A(M,N))_S\rightarrow 
\Hom_{A_S}(M_S,N_S)$ est bijective.
\end{fact}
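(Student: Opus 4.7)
L'injectivité résulte déjà du fait~\ref{fact hom egaux}, on se concentrerait donc sur la surjectivité : étant donné $\varphi : M_S \to N_S$, il s'agit de construire $\phi : M \to N$ et $s \in S$ tels que $\varphi(x/1) = \phi(x)/s$ pour tout $x \in M$.

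Puisque $M$ est de présentation finie, on fixerait une présentation $A^m \vers{G} A^q \vers{\pi} M \to 0$, en notant $g_i = \pi(e_i)$ les $q$ générateurs et $(a_{i,j})$ les coefficients de la matrice $G$, de sorte que les $m$ relations $\sum_i a_{i,j}\, g_i = 0$ (pour $j=1,\dots,m$) engendrent tout le module des relations. Pour chaque $i$, on écrirait $\varphi(g_i/1) = y_i/u_i$ avec $y_i \in N$ et $u_i \in S$. En posant $u := u_1\cdots u_q \in S$ et $n_i := (u/u_i)\, y_i \in N$, on obtiendrait un dénominateur commun : $\varphi(g_i/1) = n_i/u$ pour tout $i$.

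La linéarité de $\varphi$ appliquée aux relations donnerait alors $\sum_i a_{i,j}\, n_i / u = 0$ dans $N_S$ pour chaque $j$. Par définition de l'égalité dans $N_S$, il existerait, pour chaque $j$, un $t_j \in S$ tel que $t_j \sum_i a_{i,j}\, n_i = 0$ dans $N$. Comme il n'y a qu'un \emph{nombre fini} de relations, le produit $t := t_1 \cdots t_m \in S$ annulerait simultanément toutes les relations. L'application $A$-linéaire $A^q \to N$, $e_i \mapsto t\, n_i$, passerait donc au quotient et fournirait $\phi : M \to N$. En posant $s := tu \in S$, on vérifierait enfin, pour $x = \sum b_i g_i$, que $\phi(x)/s = (t \sum b_i n_i)/(tu) = \sum b_i\, n_i/u = \sum b_i\, \varphi(g_i/1) = \varphi(x/1)$.

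L'obstacle principal, et l'unique endroit où l'hypothèse ``de présentation finie'' intervient réellement (en lieu et place du simple ``de type fini''), est précisément la transition entre annulation dans $N_S$ et annulation effective dans~$N$ : la finitude du nombre de relations est ce qui permet d'exhiber un facteur $t \in S$ les tuant toutes d'un seul coup. Sans cela, on ne saurait pas définir $\phi$ au-dessus de la présentation.
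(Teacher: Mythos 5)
Votre démonstration est correcte et suit essentiellement la même voie que celle de l'article : on relève $\varphi$ via une présentation finie, on prend un dénominateur commun $u\in S$ pour les images des générateurs, puis on tue les (finitement nombreuses) relations par un seul $t\in S$ pour faire passer l'application au quotient. La seule différence est cosmétique : là où l'article invoque le fait~\ref{fact hom egaux} appliqué à $\Psi\circ g$ pour obtenir le facteur $s'$, vous refaites directement l'argument (définition de l'égalité dans $N_S$ et produit des $t_j$), ce qui revient au même.
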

%------------------------------
%----begin{proof------------------
\begin{proof} (Cf. \cite{Nor} exercice 9 p. 50 ou \cite{Kun} chap. IV proposition 1.10) 
Supposons que $M$ est le conoyau de l'application linéaire 
$g:A^m\rightarrow A^q$ avec une matrice $G=(g_{i,j})$ par rapport aux 
bases canoniques, alors d'après le fait \ref{fact sexloc} $M_S$ est le 
conoyau de l'application linéaire $g_S:A_S^m\rightarrow A_S^q$ avec la 
matrice $G_S=(g_{i,j}/1)$ par rapport aux bases canoniques.
On note $j_m:A^m\rightarrow A_S^m$, $j_q:A^q\rightarrow A_S^q$,  
$j_M:M\rightarrow M_S$, $j_N:N\rightarrow N_S$, $\pi:A^q\rightarrow M$, 
$\pi_S:A_S^q\rightarrow M_S$ les applications canoniques. Soit 
$\psi:=\varphi \circ \pi_S$, de sorte que $\psi\circ g_S=0$. 
Donc $\psi\circ g_S\circ j_m=0=\psi \circ j_q \circ g$. Il existe un 
dénominateur commun $s\in S$ pour les images par $\psi$ des vecteurs de 
la base  canonique, donc il existe une application linéaire 
$\Psi:A^q\rightarrow N$ avec $(s\psi)\circ j_q=j_N\circ \Psi$. D'où 
$j_N\circ\Psi\circ  g= s(j_m\circ g_S\circ \psi)=0$. D'après le fait 
\ref{fact hom egaux} appliqué~à $\Psi\circ g$,  l'égalité 
$j_N\circ(\Psi\circ  g)= 0$ dans $N_S$ implique qu'il existe $s'\in S$ tel 
que $s'(\Psi\circ  g)=0$. Donc $s' \Psi$ se factorise sous forme 
$\phi\circ \pi$. 
On obtient alors $(ss'\varphi)\circ j_M\circ \pi=ss'(\varphi 
\circ\pi_S\circ j_q)=ss'\psi\circ j_q=s'j_N\circ\Psi=j_N\circ \phi\circ 
\pi$, et puisque $\pi$ est surjective $ss'\varphi \circ j_M=j_N\circ\phi$. 
C.-à-d., pour tout $x\in M$ $\varphi(x/1)=\phi(x)/ss'$.
\begin{figure}[htbp]  
%\epsfxsize=8cm
%$$\includegraphics[scale=.8]{ptf1.eps}$$
\centerline{
\xymatrix{
A^m \ar[d]_{\displaystyle g} \ar[rrr]^{\displaystyle j_m} 
& &
&A_S^m\ar[d]^{\displaystyle g_S} & \\
A^q \ar[d]_{\displaystyle \pi}\ar[rdd]^(.4){\displaystyle
\Psi}\ar[rrr]^{\displaystyle j_q} & & 
&A_S^q\ar[d]_{\displaystyle\pi_S}\ar[rdd]^{\displaystyle 
\psi} & \\
M\ar[rrr]
% |!{[u];[rd]}\hole
^{\displaystyle j_M}\ar[rd]_{\displaystyle \phi}
& & &M_S\ar[rd]_{\displaystyle \varphi} & \\
 & N\ar[rrr]^{\displaystyle j_N} & & &N_S}
}
\caption[]{\label{fig1}  Localisation des homomorphismes
}  
\end{figure}
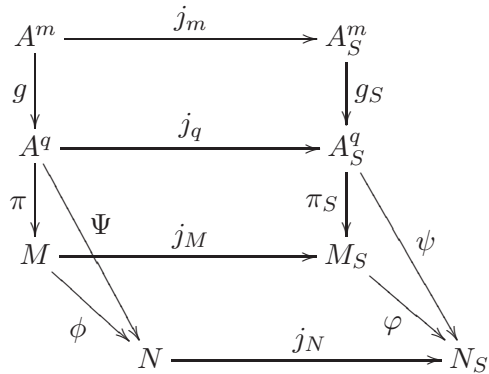  
\end{proof}
%----end{proof------------------

Un cas particulier est le suivant.

%--- Fact{lin form loc prf} ----------- 
\begin{fact} \label{fact lin form loc prf} Soit $M$ un~$A$-module de 
présentation finie, $S$ un  monoïde de~$A$ et 
$\varphi:M_S\rightarrow A_S$ une forme $A_S$-linéaire.  
Alors il existe une forme~$A$-linéaire $\phi:M\rightarrow A$ et 
$s\in S$ tels que
%----begin $$------------------
$$\forall x\in M\quad  \varphi \Big({x\over 1}\Big) = {\phi(x)\over s} .$$
%----end $$------------------
\end{fact}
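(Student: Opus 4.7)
The statement is a direct specialization of Fact \ref{fact homom loc prf}: we simply take $N = A$, since $A$ is itself an $A$-module and an $A_S$-linear form $\varphi : M_S \to A_S$ is exactly an $A_S$-linear map $M_S \to A_S$ in the sense of that fact.

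My plan is therefore to invoke Fact \ref{fact homom loc prf} with $N := A$. Applied to the given $\varphi : M_S \to A_S$, it yields an $A$-linear map $\phi : M \to A$ (i.e.\ an $A$-linear form on $M$) and an element $s \in S$ such that $\varphi(x/1) = \phi(x)/s$ for every $x \in M$. This is exactly the conclusion required, so no additional work is needed beyond citing the hypothesis that $M$ is finitely presented (which is required to apply Fact \ref{fact homom loc prf}).

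There is essentially no obstacle in this step: the only subtlety to mention is that the general fact treats homomorphisms into an arbitrary $A$-module $N$, and that the surjectivity of the canonical map $(\Hom_A(M,A))_S \to \Hom_{A_S}(M_S,A_S)$ is the content we are extracting. Injectivity (and hence bijectivity of this canonical map), when $M$ is finitely presented, follows as in the preceding statement from Fact \ref{fact hom egaux} applied to linear forms.
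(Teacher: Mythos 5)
Votre démonstration est correcte et coïncide avec celle du texte : le fait \ref{fact lin form loc prf} y est présenté précisément comme le cas particulier $N=A$ du fait \ref{fact homom loc prf}, sans argument supplémentaire. La spécialisation que vous proposez (avec le rappel que la présentation finie de $M$ est l'hypothèse requise) suffit donc entièrement.
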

%------------------------------
%--- Fact{bilocal} --------------------
\begin{fact} 
\label{fact bilocal}
Si $S\subset S'$ sont deux monoïdes de~$A$ et $M$ est un~$A$-module on a 
des identifications canoniques $(A_S)_{S'}\simeq A_{S'}$  et  
$(M_S)_{S'}\simeq M_{S'}$.
\end{fact}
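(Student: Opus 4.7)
Le plan est d'utiliser la propriété universelle de la localisation pour construire des morphismes canoniques en sens inverses, puis de vérifier sur les générateurs que ces morphismes sont inverses l'un de l'autre.

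Pour la partie concernant les anneaux: comme $S\subset S'$, la composée des applications canoniques $A\to A_S\to (A_S)_{S'}$ envoie tout élément de $S'$ sur un inversible (les éléments de $S$ le deviennent dès $A_S$, les autres au second niveau de localisation). La propriété universelle de $A_{S'}$ fournit alors un morphisme canonique $\alpha: A_{S'}\to (A_S)_{S'}$ envoyant $a/t$ sur $(a/1)/(t/1)$. Dans l'autre sens, l'application $A\to A_{S'}$ envoie $S$ sur des inversibles, d'où un morphisme $A_S\to A_{S'}$ qui, à son tour, envoie les éléments de $S'$ sur des inversibles: d'où par la propriété universelle un morphisme $\beta: (A_S)_{S'}\to A_{S'}$ envoyant $(a/s)/(t/1)$ sur $a/(st)$ (ceci a bien un sens car $st\in S'$ puisque $S\subset S'$). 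On vérifie immédiatement sur les générateurs que $\alpha\circ\beta=\Id$ et $\beta\circ\alpha=\Id$.

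Pour la partie concernant les modules, deux approches sont envisageables. La première consiste à reproduire l'argument précédent avec la propriété universelle analogue pour les modules: $M_{S'}$ est le $A_{S'}$-module caractérisé par une application $A$-linéaire universelle $M\to M_{S'}$ rendant les actions des éléments de $S'$ inversibles. La seconde approche exploite l'identification naturelle $M_S\simeq A_S\otimes_A M$ et l'associativité du produit tensoriel, via $(M_S)_{S'}\simeq A_{S'}\otimes_{A_S}(A_S\otimes_A M)\simeq A_{S'}\otimes_A M\simeq M_{S'}$, en utilisant l'identification $(A_S)_{S'}\simeq A_{S'}$ démontrée à l'étape précédente.

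Le principal point technique est la vérification que les formules proposées pour les morphismes sont bien définies, c'est-à-dire indépendantes du choix des représentants des classes d'équivalence dans les localisés. Cette vérification, routinière mais fastidieuse, utilise de manière essentielle l'hypothèse $S\subset S'$ (en particulier pour s'assurer que les produits $st$ avec $s\in S$ et $t\in S'$ appartiennent bien à $S'$). On remarquera enfin que la convention adoptée en début de paragraphe (autorisant $0\in S'$) est compatible: dans ce cas dégénéré, les deux côtés de l'identification sont réduits à l'anneau trivial (respectivement au module trivial), et l'identification est triviale.
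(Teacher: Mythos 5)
Votre démonstration est correcte. Notez que l'article énonce ce fait sans démonstration (c'est un rappel standard de la sous-section sur la localisation), il n'y a donc pas de preuve du texte à laquelle comparer la vôtre. Votre argument par propriété universelle — deux morphismes canoniques $\alpha$ et $\beta$ construits via la propriété universelle, coïncidant avec l'identité sur les générateurs — est la voie standard, et l'alternative par associativité du produit tensoriel pour le cas des modules est également valable; vous interprétez correctement $(A_S)_{S'}$ comme le localisé de $A_S$ en l'image de $S'$, et l'hypothèse $S\subset S'$ est bien utilisée là où il faut (pour que $A\to A_{S'}$ inverse $S$ et que $st\in S'$). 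Deux remarques mineures: dès lors que les morphismes sont obtenus par la propriété universelle, la vérification d'indépendance du choix des représentants que vous annoncez comme ``point technique principal'' est en fait automatique (elle est absorbée par la propriété universelle), ce paragraphe est donc superflu; et votre observation finale sur le cas $0\in S'$ est bienvenue, puisque l'article autorise explicitement les monoïdes contenant $0$, auquel cas les deux membres sont l'anneau (resp. le module) trivial.
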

%--- end-fact-----------------------------------------
%---- SUBsection{sfio}-----------------------------
\subsection{Système fondamental d'idempotents orthogonaux} \label{subsec 
sfio}
%-------------------------------------------
Dans la suite nous serons amenés~à considérer l'anneau localisé 
$A_r$  où $r$ est un idempotent, ainsi que le localisé $M_r$ pour un 
$A$-module $M$.   Il est bon de remarquer que $A_r$ s'identifie 
canoniquement~à l'idéal $rA$ muni de la structure d'anneau où $r$ 
est l'élément neutre de la multiplication. L'application canonique de 
$A$ vers $A_r$ identifié~à $rA$ est donnée par $x\mapsto rx$.
Quant~à $M_r$, il s'identifie naturellement~à $rM$ (avec l'application 
canonique $M\rightarrow rM,\;x\mapsto rx$).

Si $M$ est image d'une application linéaire $f:A^n
\rightarrow A^n$ de matrice $F$, le module  $M_r$ s'identifie aussi 
naturellement~à  l'image de l'application linéaire 
$f_r:A_r^n\rightarrow A_r^n$ ayant pour matrice la matrice $rF$ (lorsque l'on 
identifie $A_r$ avec $rA$). Ceci résulte du fait \ref{fact sexloc} 
modulo les identifications canoniques.

Rappelons que dans un anneau~$A$ un {\sl   système fondamental 
d'idempotents orthogonaux} (sfio) est une liste d'éléments de~$A$, 
$(r_1,\ldots ,r_n)$, qui vérifie
%----begin $$------------------
$$r_ir_j=0 {\rm\; \; si\;\;  } i\neq j, \quad
 {\rm et }\quad \Sigma\; r_i=1  $$
%----end $$------------------
(nous ne réclamons pas qu'ils soient tous non nuls). Ceci implique que 
$r_i=r_i^2$ pour chaque $i$.

On obtient alors:
%--- Fact{sfio} -----------------------
\begin{fact} \label{fact sfio} Si $(r_1,\ldots ,r_n)$ est un sfio d'un 
anneau~$A$, et si $M$ est un~$A$-module, on a:
$$\displaylines{A \simeq   A_{r_1}\times\cdots
\times A_{r_n}, \cr M =  {r_1}M\oplus\cdots\oplus {r_n}M
 \simeq  M_{r_1}\times\cdots\times M_{r_n} .}$$
\end{fact}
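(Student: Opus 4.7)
Le plan est d'exploiter systématiquement les identifications $A_{r_i}\simeq r_iA$ et $M_{r_i}\simeq r_iM$ rappelées au début de cette sous-section, de sorte que l'application canonique $A\to A_{r_i}$ se lit simplement $x\mapsto r_ix$, et de même pour $M$. Tout revient alors à manipuler formellement les relations $r_ir_j=0$ pour $i\neq j$, $r_i^2=r_i$ et $\sum r_i=1$.

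Pour l'isomorphisme d'anneaux, je définirais $\phi:A\to A_{r_1}\times\cdots\times A_{r_n}$ par $x\mapsto(r_1x,\ldots,r_nx)$ (vu dans les $r_iA$), qui est clairement un homomorphisme, chaque $r_iA$ étant muni de la structure d'anneau où $r_i$ est l'élément neutre. L'injectivité résulte immédiatement de l'égalité $x=(\sum r_i)x=\sum r_ix$~: si tous les $r_ix$ s'annulent, alors $x=0$. Pour la surjectivité, étant donné une famille $(r_1a_1,\ldots,r_na_n)$, je poserais $y=\sum_i r_ia_i$~; les relations d'orthogonalité donnent alors $r_jy=\sum_i r_jr_ia_i=r_j^2a_j=r_ja_j$, de sorte que $\phi(y)$ est bien la famille désirée.

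Pour la partie module, je procèderais de manière parallèle. La relation $x=(\sum r_i)x=\sum r_ix$ fournit que tout élément de $M$ s'écrit comme somme d'éléments des $r_iM$. Pour voir que cette somme est directe, je considèrerais une relation $\sum_i y_i=0$ avec $y_i\in r_iM$, c'est-à-dire $y_i=r_iy_i$~; en multipliant par $r_j$ et en utilisant l'orthogonalité puis $r_j^2=r_j$, on obtient $0=r_j\sum_i y_i=r_jy_j=y_j$, d'où la décomposition en somme directe. L'identification finale avec $M_{r_1}\times\cdots\times M_{r_n}$ découle aussitôt de l'isomorphisme $r_iM\simeq M_{r_i}$ rappelé en début de sous-section.

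Il n'y a pas réellement d'obstacle dans cette démonstration~: tous les arguments sont des vérifications élémentaires et explicitement calculatoires à partir des axiomes d'un sfio. Le seul point auquel je prêterais attention est de bien distinguer, pour la rédaction, les deux points de vue équivalents sur $A_{r_i}$ (localisé abstrait versus idéal $r_iA$), afin que les formules comme $\phi(y)=(r_1a_1,\ldots,r_na_n)$ aient un sens sans ambiguïté. L'ensemble de la preuve est entièrement constructif.
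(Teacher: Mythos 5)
Votre démonstration est correcte et constitue exactement la vérification élémentaire attendue : le papier énonce ce fait sans démonstration, en s'appuyant précisément sur les identifications $A_r\simeq rA$ et $M_r\simeq rM$ rappelées juste avant, que vous exploitez de la même manière. Rien à redire.
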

%---end-fact-----------------------------------------
%---- SUBsection{Le principe local-global}---------
\subsection{Le principe local-global} \label{subsec loc glob}
%-------------------------------------------
Un outil essentiel en algèbre classique est la localisation en (le 
complémentaire d') un idéal premier. Cet outil est a priori difficile 
à utiliser constructivement parce qu'on ne sait pas fabriquer les 
idéaux premiers qui interviennent dans les démonstrations classiques, et dont 
l'existence repose sur l'axiome du choix. Cependant, on peut remarquer que 
ces idéaux premiers sont en général utilisés~à l'intérieur de 
démonstrations par l'absurde, et ceci donne une explication du fait que le 
recours~à ces objets ``idéaux'' pourra être contourné et même 
interprété constructivement dans la section \ref{sec local-global}.

Le principe local-global abstrait en algèbre commutative est un principe 
informel selon lequel certaines propriétés concernant les modules sur 
les anneaux commutatifs sont vraies si et seulemment si elles sont vraies 
après localisation en n'importe quel idéal premier. 

\ss Nous étudions maintenant quelques cas élémentaires où le 
principe local-global s'applique. 

Nous commen\c cons~à chaque fois par des versions concrètes en 
apparence plus faibles, mais qui s'avèreront bien utiles, au moins d'un 
point de vue constructif. Pour ces versions concrètes, la localisation 
n'est pas réclamée ``en n'importe quel idéal premier'' mais en un 
nombre fini d'éléments de~$A$ qui engendrent~$A$ en tant qu'idéal. 
En langage savant, dans un principe local-global concret on recouvre le 
spectre de l'anneau par un nombre fini d'ouverts, tandis que dans un 
principe local-global abstrait on voit le spectre comme l'ensemble de ses 
points.

\ss Nous disons qu'un élément $a$ de~$A$ est {\sl  non diviseur de 
zéro} si la suite 
$$0\vers{}A\vers{a .}A$$
est exacte. Autrement dit, on~a:
%----begin $$------------------
$$\forall b\in A\quad (ba=0 \; \Rightarrow \; b=0).$$
%----end $$------------------
C'est seulement pour l'anneau trival que 0 est non diviseur de zéro.
%--- Pr local global concret{plcc ring} 
\begin{plcc} \label{plcc ring} Soient $s_1,\ldots s_n\in A$ avec 
$s_1A+\cdots +s_nA= A$, et soit $a\in A$. Alors on~a les équivalences 
suivantes:

\sni $\bullet$ Recollement concret des égalités:
%--------------------begin snic---------------

\snic{ 
a=0\;    \Leftrightarrow \; 
\forall i\in \{1,\ldots,n \}\;\; a/1= 0  \quad   dans \quad   A_{s_i}  } 
%---------------------end snic--------------

\sni $\bullet$ Recollement concret des non diviseurs de zéro:
%--------------------begin snic---------------

\snic{ 
a\;  est\;  non\;  diviseur\;  de\;  z\E ero\;  dans\;   A\;    
\Leftrightarrow  \;      
\forall i\in \{1,\ldots,n \} 
\;\;a/1\;  est\;  non\;  diviseur\;  de\;  z\E ero\;  dans\;   A_{s_i}
} 
%---------------------end snic--------------

\sni $\bullet$ Recollement concret des inversibles:
%--------------------begin snic---------------

\snic{ 
a  \;  est \;inversible  \;dans\;    
A\;    \Leftrightarrow  \;     
\forall i\in \{1,\ldots,n \}\; \;a/1 \; \;  est \;inversible  \;dans\;    
A_{s_i}  
} 
%---------------------end snic--------------

\end{plcc}
%---------------------------------
%----begin{proof------------------
\begin{proof} Les conditions sont nécessaires en raison du fait 
\ref{fact sexloc}. Une vérification directe est d'ailleurs immédiate.\\
Pour prouver que les conditions sont suffisantes, nous traitons sans perte 
de généralité  le cas avec $n=2$ et $s_1=s,\; s_2=t,\; s+t=1$. 

\smallskip\noindent
Supposons d'abord que $a/1=0$ dans~$A_s$ et dans $A_t$. Pour un entier 
$h\le 0$ convenable on~a donc $s^ha=0=t^ha$ dans~$A$. Or 
$1=(s+t)^{2h}=us^h+vt^h$ pour $u$ et $v$ convenables dans~$A$. Donc 
$a=1a=us^ha+vt^ha=u\times 0+v\times 0=0$ dans~$A$.

\smallskip\noindent 
Supposons maintenant que $a/1$ soit non diviseur de zéro   dans~$A_s$ et 
dans $A_t$. Soit $b\in A$ avec $ab=0$ dans~$A$ donc aussi $ab/1=0$ dans 
$A_s$ et dans $A_t$. On a donc $b/1=0$ dans~$A_s$ et dans $A_t$, donc 
aussi dans~$A$.

\smallskip\noindent 
Supposons enfin que $a/1$ soit inversible   dans~$A_s$ et dans $A_t$. 
Soient donc $b,c\in A$ et un entier $k\geq 0$ avec $ab/s^k=1$ dans~$A_s$ 
et $ac/t^k=1$ dans $A_t$, i.e., pour un entier $p\geq 0$, $abs^p=s^{p+k}$ 
et  $act^p=t^{p+k}$ dans~$A$. Posons $h=p+k$ et comme ci-dessus 
déterminons $u$ et $v$ dans~$A$ tels que $us^h+vt^h=1$  dans~$A$. Alors 
$a\times (ubs^p+vct^p)= us^h+vt^h=1$ dans~$A$. \end{proof}
%----end{proof------------------

%--- Notation{S(A)} -------------------
\begin{notation} 
\label{nota Spec(A)}
On note $\Spec(A)$ l'ensemble des  idéaux premiers de~$A$. 

\ni Pour $\cp\in\Spec(A)$ et $S=A\setminus\cp$ on note $\Ap $ pour $A_S$ 
(l'ambiguité entre les deux notations contradictoires $\Ap $ et $A_S$ 
est levée en pratique par le contexte).

\ni Si $x$ est un élément d'un~$A$-module $M$, nous notons \\
$$\Ann(x):=\{a\in A\; ;\; ax=0\}$$
l'idéal annulateur de $x$.  
\end{notation}
%---end-notation-----------------------------------------
La relation étroite qui existe entre les localisés locaux d'un anneau 
$A$ et ses idéaux premiers est précisée dans le fait suivant.
%--- Fact{localisé local}------------ 
\begin{fact} 
\label{fact localise local}
Un monoïde $S$ d'un anneau~$A$ est dit {\em  saturé} lorsque l'on~a 
l'implication
%-----------------begin $$------------------
$$
\forall s, t \in A \;\;( st\in S \;\Rightarrow\; s\in S).
$$
%-----------------end $$------------------
Pour qu'un monoïde multiplicatif saturé $S$ fasse de $A_S$ un 
anneau local non trivial, il faut et suffit que~$S=A\setminus \cp$ où 
$\cp$ est un idéal premier.\\
Par ailleurs, tout homomorhisme $A\rightarrow B$ de~$A$ vers un anneau 
local $B$ se factorise de manière unique par $A_{\cp}$ où $\cp$ est 
l'image réciproque de l'idéal maximal de $B$.
\end{fact}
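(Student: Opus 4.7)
Le plan consiste à traiter les deux parties séparément et à réduire chacune à une traduction élémentaire des propriétés en jeu.

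Je commencerais par la première assertion, en montrant d'abord la direction facile : si $S=A\setminus\cp$ avec $\cp$ premier, alors $A_S$ est bien local non trivial. Pour cela, je vérifierais que, pour $a\in A$ et $s\in S$ quelconques, soit $a\in S$ et $a/s$ est inversible d'inverse $s/a$, soit $a\in\cp$ et alors $s-a\in S$ (car $S$ est le complémentaire d'un idéal premier, donc stable par somme d'un élément de $S$ et d'un élément de $\cp$), de sorte que $1-a/s=(s-a)/s$ est inversible. La non-trivialité découle de $0\notin S$. Pour la direction inverse, l'observation centrale est la suivante : puisque $S$ est saturé, un élément $a/s$ de $A_S$ est inversible si et seulement si $a\in S$. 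En effet, si $(a/s)(b/t)=1$ dans $A_S$, il existe $u\in S$ avec $uab=ust\in S$, donc $a\in S$ par saturation ; la réciproque est immédiate.

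Je poserais ensuite $\cp=A\setminus S$ et je montrerais que c'est un idéal premier. L'inclusion $0\in\cp$ vient de la non-trivialité de $A_S$. La stabilité par multiplication par un élément de $A$ provient directement de la saturation : si $ra\in S$ alors $a\in S$. Le point qui me semble être le principal obstacle est la stabilité par somme : supposer $a,b\in\cp$ mais $a+b\in S$. Dans $A_S$, l'élément $a/(a+b)$ est défini, et par le caractère local de $A_S$, ou bien $a/(a+b)$ est inversible — ce qui entraînerait $a\in S$ par l'observation ci-dessus — ou bien $1-a/(a+b)=b/(a+b)$ est inversible, ce qui entraînerait $b\in S$. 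Dans les deux cas on obtient une contradiction, donc $a+b\in\cp$. Enfin, la primalité de $\cp$ équivaut à ce que $S$ soit multiplicativement clos, ce qui est l'hypothèse même.

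Pour la seconde assertion, je poserais $\cm$ l'idéal maximal de $B$ (dont on sait, par le commentaire suivant la définition \ref{def anneau local}, qu'il coïncide avec l'ensemble des non-inversibles) et $\cp=f^{-1}(\cm)$. Par contraposée, si $s\in A\setminus\cp$ alors $f(s)\notin\cm$, donc $f(s)$ est inversible dans $B$. La propriété universelle de la localisation fournit alors l'unique factorisation $\widetilde f:A_{\cp}\to B$ de $f$ par l'application canonique $A\to A_{\cp}$. Il ne reste qu'à vérifier que $\cp$ est bien premier : cela résulte soit directement de la définition (image réciproque d'un idéal maximal), soit, en reliant à la première partie, en observant que le monoïde saturé $A\setminus\cp$ est précisément l'ensemble des $a\in A$ tels que $f(a)$ est inversible dans $B$, et que l'application $A_{\cp}\to B$ montre que $A_{\cp}$ s'injecte dans un anneau local, ce qui permet (via la première partie) d'identifier $\cp$ à un premier. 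L'unicité de $\widetilde f$ est automatique par propriété universelle.
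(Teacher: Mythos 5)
Le papier énonce ce Fait sans démonstration (c'est un rappel de la section~1), il n'y a donc pas de preuve de référence à laquelle comparer la vôtre. Votre argument est correct et complet. L'observation clé --- dans $A_S$ avec $S$ saturé, $a/s$ est inversible si et seulement si $a\in S$ --- est bien justifiée (de $uab=ust\in S$ on tire $a\in S$ par saturation), et c'est elle qui fait fonctionner à la fois la stabilité de $\cp=A\setminus S$ par addition (axiome des anneaux locaux appliqué à $a/(a+b)$) et la réciproque facile; la direction directe (complémentaire d'un premier $\Rightarrow$ localisé local non trivial) et la primalité de $\cp$ sont traitées correctement. Pour la seconde assertion, l'invocation de la propriété universelle de la localisation, après avoir vérifié que $f$ envoie $A\setminus\cp$ dans les inversibles de $B$ et que $\cp$ est premier comme image réciproque d'un idéal premier, est exactement ce qu'il faut. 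Signalez-vous seulement que votre raisonnement est purement classique (tiers exclu pour l'alternative $a\in S$ ou $a\in\cp$, et lecture de l'idéal maximal de $B$ comme complémentaire des inversibles), ce qui est cohérent avec le statut de ce Fait dans l'article: il ne sert qu'aux principes local-globals abstraits, dont les démonstrations sont elles-mêmes non constructives; une version entièrement constructive demanderait d'autres précautions, mais ce n'est pas ce que l'énoncé réclame.
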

%--- end-fact-----------------------------------------

La version abstraite puissante du principe local-global concret 
précédent est la suivante.
%--- Pr local global abstrait{plcaring} 
\begin{plca} \label{plca ring} Soit $a\in A$.  Alors on~a les 
équivalences suivantes:

\sni
$\bullet$ Recollement abstrait des égalités:
%--------------------begin snic---------------

\snic{ 
a=0 \;   \Leftrightarrow \; 
\forall \cp\in\Spec(A)\; \;a/1= 0  \quad   dans \quad   \Ap   
} 
%---------------------end snic--------------

\sni $\bullet$  Recollement abstrait des non diviseurs de zéro:
%--------------------begin snic---------------

\snic{ 
a\;  est\;  non\;  diviseur\;  de\;  z\E ero\;  dans\;   A \;   
\Leftrightarrow  \; 
\forall \cp\in\Spec(A)\; \; a/1\;  est\;  non\;  diviseur\;  de\;  z\E 
ero\;  dans\;   \Ap   
} 
%---------------------end snic--------------

\sni 
$\bullet$ Recollement abstrait des inversibles:
%--------------------begin snic---------------

\snic{ 
a  \;  est \;inversible  \;dans\;    
A\;    \Leftrightarrow  \;     
\forall \cp\in\Spec(A)\; \;a/1 \; est \;inversible \;dans\; \Ap
} 
%---------------------end snic--------------
\end{plca}
%---------------------------------
%----begin{proof------------------
\begin{proof}[Démonstrations  (non constructives).]
Les conditions sont nécessaires en raison du fait~\ref{fact sexloc}. Une 
vérification directe est d'ailleurs immédiate.\\
Pour les réciproques, nous supposons sans perte de généralité que 
l'anneau~$A$ est non trivial. 

\sni {\sl Première démonstration.}\\
Supposons d'abord $a\not= 0$ dans~$A$, soit $\Ann(a)$ l'idéal annulateur 
de $a$, qui est un idéal strict, soit $\cp$ un idéal premier contenant 
$\Ann(a)$ et soit $S=A\setminus \cp$. L'ensemble $S\cap \Ann(a)$ est vide, 
donc $a/1\not= 0$ dans $A_S$.\\
On en déduit la deuxième réciproque comme dans le cas analogue du 
principe local-global concret \ref{plcc ring}.\\
Supposons enfin $a$ non inversible dans~$A$. Soit $\cp$ un idéal premier 
contenant $aA$ et soit $S=A\setminus \cp$. Alors $a/1$ est non inversible 
dans $A_S$.   

\sni {\sl Deuxième démonstration.}  (pour les cas $a=0$ et $a$ inversible)\\
Pour chaque idéal premier $\cp$ on peut trouver $s\notin \cp$ tel que 
$a/1$ est nul (resp. inversible) dans~$A_s$. Les ouverts correspondants 
$U_s=\{\cp\in\Spec(A);\;s\notin \cp\}$ recouvrent $\Spec(A)$, donc les $s$ 
correspondants engendrent~$A$ comme idéal, donc un nombre fini d'entre 
eux, $s_1,\ldots ,s_m$ engendrent~$A$ comme idéal. On peut donc faire 
appel au principe local-global concret correspondant.
\end{proof}
%----end{proof------------------

%--- Comment{plga1} -------------------
\begin{comment} 
\label{comment plga1}
{\rm  La deuxième démonstration montre bien le lien entre le principe 
local-global abstrait et le principe local-global concret. Cependant, il 
ne semble pas qu'elle puisse jamais être rendue constructive. La 
première démonstration n'est pas non plus ``en général'' constructive, mais 
il existe des cas où elle l'est. Il suffit pour cela que les conditions 
suivantes soient vérifiées.

\ni Dans le cas du recollement des égalités:

\ni --- l'anneau~$A$ est discret,

\ni --- pour tout $a\not= 0$ dans~$A$ on sait construire un idéal 
premier $\cp$ de~$A$ contenant $\Ann(a)$.

\ni Dans le cas du recollement des inversibles:

\ni --- l'ensemble des $a\in A$ inversibles est une partie détachable de 
$A$,

\ni --- pour tout $a\in A$ non inversible, on sait construire un idéal 
premier $\cp$ de~$A$ contenant $aA$.

\noindent C'est par exemple le cas lorsque~$A$ est une algèbre de 
présentation finie sur $\Z$ ou sur un corps ``pleinement factoriel'' 
(voir \cite{MRR}).  
} 
\end{comment}
%---end-comment-----------------------------------------

En pratique, on peut comprendre le principe local-global abstrait 
\ref{plca ring}  sous la forme intuitive suivante: pour démontrer un 
théorème d'algèbre commutative dont la signification est qu'un 
certain élément d'un anneau commutatif~$A$ est nul, non diviseur de 
zéro, ou inversible, il suffit de traiter le cas où l'anneau est 
local. C'est un principe du même genre que le principe de Lefschetz: 
pour démontrer un théorème d'algèbre commutative dont la 
signification est qu'une certaine identité algébrique a lieu, il 
suffit de traiter le cas où l'anneau est le corps des complexes (ou 
n'importe quel sous anneau qui nous arrange, d'ailleurs).

\smallskip Un résultat local-global concret, qui donne la moitié la 
plus facile du théorème 1, est le suivant.

%--- Pr local global concret {plcc ptf}
\begin{plcc} 
\label{prop ptf-local-concret}[recollement concret de modules de type fini, de présen\-ta\-tion  finie ou projectifs de type fini] 

\noindent Supposons que $s_1,\ldots s_n\in A$ avec $s_1A+\cdots +s_nA= A$, et soit 
$M$ un~$A$-module.   Alors on~a les équivalences suivantes:

\noindent $\bullet$
$M$ est  de type fini si et seulement si chacun des $M_{s_i}$ est un 
$A_{s_i}$-module  de type fini.

\noindent $\bullet$
$M$ est de présentation finie si et seulement si chacun des $M_{s_i}$ 
est un $A_{s_i}$-module de présentation finie.

\noindent $\bullet$
$M$ est projectif de type fini si et seulement si chacun des $M_{s_i}$ est 
un $A_{s_i}$-module projectif de type fini.
\end{plcc}
%--- end-plcc-----------------------------------------
%----begin{proof------------------
\begin{proof} Les conditions sont clairement nécessaires. Pour 
prouver qu'elles sont suffisantes, nous traitons sans perte de 
généralité  le cas avec $n=2$ et $s_1=s,\; s_2=t,\; s+t=1$. 

Tout d'abord supposons que  $M_s$ est un~$A_s$-module  de type fini et 
$M_t$ est un $A_t$-module  de type fini. Montrons que $M$ est de type fini.
Soit $g_1,\ldots ,g_q$ des éléments de $M$ qui engendrent $M_s$ et 
$M_t$. Soit $x\in M$ arbitraire. On a pour un certain exposant $m$ et 
certains éléments  $a_1,\ldots ,a_q$ de~$A$ une égalité
%----begin $$------------------
$$ s^mx=a_1g_1+\cdots+a_qg_q \quad {\rm  dans} \quad M_s,   
$$
%----end $$------------------
et donc pour un certain exposant $p$
%----begin $$------------------
$$ s^{m+p}x=s^pa_1g_1+\cdots+s^pa_qg_q \quad {\rm  dans} \quad M.   
$$
%----end $$------------------
On écrit une égalité du même style avec $t$, et on les combine 
selon la procédure $us^h+vt^h=1$  comme dans les démonstrations précécentes.

\smallskip Supposons maintenant que  $M_s$ est un~$A_s$-module  de 
présentation finie et $M_t$ est un $A_t$-module  de présentation 
finie. Montrons que $M$ est de de présentation finie. 
\\
Soit $g_1,\ldots ,g_q$ un système générateur de $M$. 
\\
Soit $(a_{i,1},\ldots,a_{i,q})\in A_s^q$ des relations entre les $g_i/1\in 
M_s$ (i.e., $\Sigma_j \;a_{i,j}g_j=0$ dans $M_s$) pour\hbox{ $i=1,\ldots,k_1$}, 
qui engendrent le~$A_s$-module (contenu dans $A_s^q$) des relations entre 
les $g_j/1$. On peut supposer sans perte de généralité que chaque 
$a_{i,j}$ est en fait un élément $a'_{i,j}/1$ avec $a'_{i,j}\in A$. Il 
existe alors un exposant $n$ convenable tel que les vecteurs 
$s^n(a'_{i,1},\ldots,a'_{i,q}) = (a''_{i,1},\ldots,a''_{i,q})\in A^q$ 
soient des~$A$-relations entre les $g_j\in M$.
\\
Considérons de la même manière un système générateur de 
relations $(b_{i,1},\ldots,b_{i,q})\in A_t^q$ \hbox{(où $i=1,\ldots,k_2$)} 
entre les $g_i/1\in M_t$, avec $b_{i,j}=b'_{i,j}/1$ où 
$a'_{i,j}\in A$, puis 
$t^m(b'_{i,1},\ldots,b'_{i,q})=(b''_{i,1},\ldots,b''_{i,q})\in A^q$ qui 
sont des~$A$-relations entre les $g_j\in M$.
\\
Montrons que les deux systèmes de relations ainsi construits entre les
$g_j$ engendrent toutes les relations. Soit en effet une relation 
arbitraire $(c_1,\ldots ,c_q)$  entre les $g_j$. 
Considérons la comme une relation entre les $g_j/1\in M_s$ et 
écrivons la en conséquence comme combinaison~$A_s$-linéaire des 
vecteurs $(a''_{i,1},\ldots,a''_{i,q})\in A_s^q$. Après multiplication 
par une puissance convenable $s^{h}$ de $s$ on obtient une égalité 
dans $A^q$:
%-----------------begin $$------------------
$$
s^{h}(c_1,\ldots ,c_q)=e_1(a''_{1,1},\ldots,a''_{1,q})+
\cdots+e_q(a''_{k_1,1},\ldots,a''_{k_1,q}). 
$$
%-----------------end $$------------------
On fait de même avec $t$ et il reste~à combiner les deux résultats 
selon la procédure $us^{h}+vt^{h}=1$  comme dans les démonstrations 
précécentes.

\smallskip Supposons enfin que  $M_s$ est un~$A_s$-module  projectif de 
type fini et $M_t$ est un $A_t$-module  projectif de type fini. Montrons 
que $M$ est  projectif de type fini. 
Puisque $M_s$ est projectif de type fini, il existe des formes 
$A_s$-linéaires $\alpha_1,\ldots ,\alpha_q$ sur $M_s$ telles que
%----begin $$------------------
$$ \forall x\in M_s \quad x=\alpha_1(x)g_1+\cdots+\alpha_q(x)g_q \quad 
{\rm  dans} \quad M_s.
$$
%----end $$------------------
D'après le fait \ref{fact lin form loc prf}, puisque $M$ est de 
présentation finie, il existe un exposant $m$ et des formes  
$A$-liné\-aires $\alpha'_1,\ldots ,\alpha'_q$ sur $M$ telles que
%----begin $$------------------
$$ \forall x\in M \quad s^m\alpha_1(x)=\alpha'_1(x),\ldots,
s^m\alpha_q(x)=\alpha'_q(x) \quad {\rm  dans} \quad M_s,
$$
%----end $$------------------
et donc
%----begin $$------------------
$$ \forall x\in M \quad s^mx=\alpha'_1(x)g_1+\cdots+\alpha'_q(x)g_q \quad 
{\rm  dans} \quad M_s.
$$
%----end $$------------------
Donc, comme $M$ est de type fini (voir le fait \ref{fact hom egaux}) il 
existe un exposant $p$ tel que
%----begin $$------------------
$$ \forall x\in M \quad 
s^{m+p}x=s^p\alpha'_1(x)g_1+\cdots+s^p\alpha'_q(x)g_q.
$$
%----end $$------------------
On écrit une égalité du même style avec $t$, et on les combine 
selon la procédure $us^h+vt^h=1$  comme dans les démonstrations précécentes.
 \end{proof}
%----end{proof------------------
%--- Remark{ptf-local-concret}---------
\begin{remark} 
\label{rem ptf-local-concret}
Les démonstrations sont toujours ``les mêmes''. Il existe un traitement 
un peu plus abstrait, s'appuyant sur la notion de module fidèlement plat 
qui permet de voir pourquoi. Voir par exemple \cite{Kni} proposition 2.3.5 
et lemme 3.2.3. L'exposé dans \cite{Kni} du principe de recollement 
concret des modules projectifs de type fini manque de peu une démonstration 
entièrement constructive. Dans~\cite{Kun} ce principe est l'objet de la 
règle 1.14 du chapitre IV, mais là aussi la démonstration n'est pas 
constructive.   
\end{remark}
%---end-remark-----------------------------------------

La principe local-global concret \ref{prop ptf-local-concret} de 
recollement des modules projectifs admet la version abstraite suivante. 
Nous n'utiliserons pas ce résultat.  
%--- Pr local global abstrait{plca ptf}
\begin{plca}[recollement abstrait de modules projectifs]
\label{prop ptf-local-abstrait}~\\
Soit $M$ un~$A$-module.  Supposons que $M$ soit de présentation finie ou 
que $M$ soit de type fini et~$A$ intègre, alors 
$M$ est projectif de type fini si et seulement si les localisés 
$M_{\cp}$, pour tous les ${\cp}\in \Spec(A)$  sont libres.
\end{plca}
%--- end-plca-----------------------------------------
%----begin{proof------------------
\begin{proof} (cf. \cite{Nor} chap. 2, théorème 14 p. 43 et 
exercice 10 p. 51, \cite{Kni}  théorème 3.3.7).

\ni Nous donnons une démonstration pour le cas d'un module de présentation 
finie, distincte de celles citées ci-dessus. Cette démonstration fonctionne 
comme la deuxième démonstration du principe local-global abstrait \ref{plca 
ring}. 

\ni Il faut montrer que la condition est suffisante. Dire qu'une matrice 
$G$ présente un module libre de rang $k$ revient~à dire qu'on peut 
passer de $G$~à une matrice nulle de type $k\times 1$ par une suite 
finie de transformations élémentaires décrites~à la section 
\ref{subsec prf}.

\ni Soit maintenant $\cp$ un idéal premier. Si ce que nous venons 
d'expliquer fonctionne pour le $A_{\cp}$-mo\-du\-le~$M_{\cp}$ et un certain 
entier $k$, cela fonctionne aussi pour le~$A_s$-module $M_s$ pour un 
$s\in A\setminus \cp$ convenable, ceci en vertu du nombre fini 
d'égalités dans $A_{\cp}$ mises en jeu lors de ces transformations 
élémentaires.  

\ni Il reste~à recouvrir $\Spec(A)$ par un nombre fini d'ouverts 
$U_{s_i}$ et~à faire appel au principe local-global concret de 
recollement des modules projectifs de type fini.  
\end{proof}
%----end{proof------------------

Les deux principes qui suivent (concret et abstrait) ne seront pas 
utilisés dans la suite de l'article. Les démonstrations sont analogues~à 
celles des principes \ref{plcc ring}. Le principe concret peut par exemple 
être trouvé dans le livre de Knight \cite{Kni}.
%--- Pr local global concret {plcc sex}
\begin{plcc}[recollement concret des suites 
exactes] \label{plcc sex} ~\\
Supposons que $s_1,\ldots s_n\in A$ avec $s_1A+\cdots +s_nA= A$, et soit 
$f:M\rightarrow N$ et $g:N\rightarrow P$ des applications 
$A$-linéaires entre~$A$-modules. Alors la suite

\snic{
M\vers{f}N\vers{g}P}

\ni est exacte si et seulement si les suites

\snic{
M_{s_i}\vers{f_{s_i}}N_{s_i}\vers{g_{s_i}}P_{s_i}}

\ni 
sont exactes pour $i\in \{1,\ldots,n \}$.
\end{plcc}
%---------------------------------
%--- Pr local global abstrait{plca sex}
\begin{plca}[recollement abstrait des suites 
exactes] \label{plca sex}~\\
Soit $f:M\rightarrow N$ et $g:N\rightarrow P$ des applications 
$A$-linéaires entre~$A$-modules. Alors la suite

\snic{
M\vers{f}N\vers{g}P}

\ni  
est exacte si et seulement si les suites 

\snic{
M_{\cp}\vers{f_{\cp}}N_{\cp}\vers{g_{\cp}}P_{\cp}}

\ni 
sont exactes pour tous les $\cp\in\Spec(A)$.
\end{plca}
%---------------------------------
%----- SECTION{Matrices de projection}-----------------
\section{Matrices de projection} \label{sec Matproj}
%-------------------------------------------

%---- SUBsection{cas d'un anneau local}------------
\subsection{Cas d'un anneau local} \label{subsec cas local}
%-------------------------------------------
%--- prop{cas d'un anneau local}-------
\begin{proposition}[cas d'un anneau local] \label{prop cas local}
Soit~$A$ un anneau local,  $F\in{\rm Mat}_n(A)$ avec $F^2=F$ et~$M$ le 
module projectif de type fini image de $F$ dans $A^n$. Il existe un  
entier $k$ $(0\le k \le n)$ tel que  
${\rm det}(\In+XF)=(1+X)^k$. En outre tous les mineurs d'ordre $k+1$ de 
$F$ sont nuls.
\end{proposition}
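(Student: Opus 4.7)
Le plan est d'appliquer directement le lemme de la liberté locale démontré plus haut. Puisque $A$ est local et que $M$ est projectif de type fini sur $A$, ce lemme fournit un entier $k$ avec $0\le k\le n$ et une matrice $P\in \Mat_n(A)$ inversible tels que $P^{-1}FP = \I_{k,n,n}$, la matrice de projection standard. Cet entier $k$ sera celui annoncé dans la proposition.

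Je calculerai ensuite $\det(\In+XF)$ en utilisant l'invariance du déterminant par similitude:
$$\det(\In+XF) \;=\; \det\bigl(P^{-1}(\In+XF)P\bigr) \;=\; \det(\In+X\,\I_{k,n,n}).$$
Or cette dernière matrice est diagonale, avec $k$ coefficients égaux à $1+X$ et $n-k$ coefficients égaux à $1$, donc son déterminant vaut $(1+X)^k$.

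Pour la seconde assertion (annulation des mineurs d'ordre $k+1$ de $F$), j'invoquerai la formule de Binet--Cauchy, qui entraîne que l'idéal déterminantiel $D_r(G)$ engendré par les mineurs d'ordre $r$ d'une matrice $G$ est invariant par multiplication à gauche ou à droite par une matrice inversible. On a ainsi $D_{k+1}(F) = D_{k+1}(\I_{k,n,n})$, et ce dernier idéal est nul puisque la matrice de projection standard $\I_{k,n,n}$ est de rang exactement $k$ et a tous ses mineurs d'ordre $k+1$ manifestement nuls. Donc tous les mineurs d'ordre $k+1$ de $F$ sont nuls.

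Toute la difficulté est déjà résolue par le lemme de la liberté locale (pour lequel trois démonstrations ont été données plus haut, selon les hypothèses faites sur la discrétion du corps résiduel); les deux étapes restantes ne sont que des vérifications immédiates sur la matrice standard combinées à l'invariance par similitude du déterminant et des idéaux déterminantiels.
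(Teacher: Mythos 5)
Votre démonstration est correcte et suit exactement la voie du papier: celui-ci traite la proposition comme une conséquence immédiate du lemme de la liberté locale ($F$ semblable à $\I_{k,n,n}$), et vos deux vérifications (invariance du déterminant par similitude donnant $(1+X)^k$, et invariance des idéaux déterminantiels par Binet--Cauchy donnant la nullité des mineurs d'ordre $k+1$) ne font qu'expliciter ce que le papier laisse au lecteur. Rien à redire.
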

%------------------------------
%----begin{proof------------------
\begin{proof} Il s'agit d'une conséquence immédiate du lemme 
de la liberté locale: toute matrice de projection sur un anneau local 
est semblable~à une matrice de projection standard~$\I_{k,n,n}$. 
L'entier $k$ est  uniquement déterminé si l'anneau est non trivial.
\end{proof}
%----end{proof------------------

Notez que la démonstration précédente est entièrement constructive 
lorsqu'elle est basée sur la troisième démonstration du lemme de la liberté 
locale. Les deux autres démonstrations réclameraient que l'anneau local ait un 
corps résiduel discret.
%---- SUBsection{cas general}----------------------
\subsection{Cas général} \label{subsec cas general}
%-------------------------------------------
%--- Theorem{matrice de projection} ---
\begin{theorem}[matrices de projection: idempotents et localisations libres] 
\label{th matproj} 
~\\
Soit  $A$ un anneau, $F\in{\rm Mat}_n(A)$ avec $F^2=F$ et $M$ le module 
projectif de type fini image de $F$ dans~$A^n$. Posons 
$\R(1+X):={\rm det}(\In+XF),\,\R(X)=:r_0+r_1X+\cdots+r_nX^n$. 
Alors le système  
$(r_0,r_1,\ldots,r_n)$ 
est un système fondamental d'idempotents orthogonaux.

\ni En outre, les mineurs d'ordre $(k+1)$ de la matrice $r_kF$ sont tous 
nuls. Et si $s$ est un mineur diagonal d'ordre $k$ de $r_kF$, alors le 
module $M_s$ est libre de rang $k$ sur l'anneau~$A_s$. 
\end{theorem}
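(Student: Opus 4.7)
The plan is to use the abstract local-global principle (principe local-global abstrait \ref{plca ring}) to reduce the theorem to the local case already treated in Proposition \ref{prop cas local}. For any prime ideal $\cp$ of $A$, the ring $A_{\cp}$ is local, and the matrix $F$ remains idempotent after localization. By Proposition \ref{prop cas local} there exists $k = k(\cp)$ with $0 \le k \le n$ such that, in $A_{\cp}[X]$,
\[
\det(\In + XF) \;=\; (1+X)^{k},
\]
i.e.\ $\R(1+X) = (1+X)^k$ in $A_{\cp}$, which by substituting $Y = 1+X$ gives $\R(Y) = Y^k$. In $A_{\cp}$ we therefore have $r_k = 1$ and $r_j = 0$ for $j \ne k$.

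From this I would deduce the sfio property as follows. For each $\cp$, the equality $\sum_i r_i = 1$ holds in $A_{\cp}$, and for any fixed pair $i \ne j$ one of $r_i, r_j$ vanishes in $A_{\cp}$, so $r_i r_j = 0$ in $A_{\cp}$. The abstract recollement of equalities then gives $\sum_i r_i = 1$ and $r_i r_j = 0$ in $A$ itself, which is the definition of a fundamental system of orthogonal idempotents. For the second assertion, fix $k$ and consider any $(k+1)$-minor $\mu$ of $r_k F$. Localizing at $\cp$: either $r_k = 0$ in $A_{\cp}$ (then $\mu = 0$), or $r_k = 1$ and the local rank is $k$, in which case Proposition \ref{prop cas local} asserts that every $(k+1)$-minor of $F$ vanishes in $A_{\cp}$, hence $\mu = 0$. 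In both cases $\mu/1 = 0$ in every $A_{\cp}$, so by \ref{plca ring} we get $\mu = 0$ in $A$.

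For the final assertion, let $s$ be a diagonal $k$-minor of $r_k F$. Factoring the idempotent out of each of the $k$ columns, we get $s = r_k^{k} \, m$ where $m$ is the corresponding diagonal $k$-minor of $F$. After inverting $s$, the element $r_k^{k}$ becomes invertible in $A_s$, so $r_k$ itself is invertible there; but $r_k$ is idempotent, hence $r_k = 1$ in $A_s$. Consequently $s = m$ is an invertible $k$-minor of $F$ in $A_s$, while the $(k+1)$-minors of $F = r_k F$ all vanish in $A_s$ by the second assertion already proved. Applying the lemme de la liberté (\ref{lem prf libre}) to $F \colon A_s^n \to A_s^n$ shows that $\mathrm{Im}(F)$ is a free $A_s$-module of rank $k$; via the identification of \ref{fact sexloc} this image is exactly $M_s$, which is therefore free of rank $k$.

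The only step that is not a routine verification is the invocation of the abstract local-global principle for equalities, which transforms the pointwise statements obtained at each prime into honest identities in $A$; this is precisely the non-constructive ingredient that section \ref{sec local-global} is designed to reinterpret. The rest of the argument is purely computational, the key algebraic observation being that $s$ being a $k$-minor of $r_k F$ automatically drags $r_k$ up to $1$ on the localization $A_s$, which lets us replace $r_k F$ by $F$ and apply the lemme de la liberté directly.
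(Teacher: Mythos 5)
Votre démonstration est correcte et suit essentiellement la même voie que celle de l'article : réduction au cas local (proposition \ref{prop cas local}) via le principe local-global abstrait \ref{plca ring} pour obtenir $r_ir_j=0$ et l'annulation des mineurs d'ordre $k+1$ de $r_kF$, puis, pour un mineur diagonal $s$ d'ordre $k$ de $r_kF$, observation que $r_k$ devient égal à $1$ dans $A_s$ et application du lemme de la liberté. Vous explicitez simplement un peu plus le passage $s=r_k^k m$ et l'identification $\Im(F_s)\simeq M_s$, que l'article expédie d'un « a fortiori ».
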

%---------------------------------
%--- Rem{convention}-------------------
\begin{remark} \label{rem conv}
On notera que la dernière affirmation du théorème reste vraie 
si $s=0$ en raison de la convention \ref{conven rgc}. De même, avec 
cette convention la proposition \ref{prop cas local} reste vraie dans le 
cas d'un anneau trivial si on ne demande pas l'unicité de $k$. 
\end{remark}
%------------------------------
%--- Rem{polycar}----------------------
\begin{remark} 
\label{rem polycar}
La définition des $r_i$ attachés~à la matrice $F$ peut être 
relue comme suit en utilisant le polynôme caractéristique sous sa forme 
usuelle:
%----begin $$------------------
$$\det(X\In-F)=:r_0X^n+r_1X^{n-1}(X-1)+\cdots+ r_iX^{n-i}(X-1)^i+
\cdots+r_n(X-1)^n$$
%----end $$------------------
(les $X^{n-i}(X-1)^i$ forment une base du module des polynômes de degré 
$\leq n$, triangulaire par rapport~à la base usuelle) 
\end{remark}
%--- end{remark} ----------------
%--------------begin{proof------------------
\begin{proof}[Démonstration du théorème]
On a trivialement $\R(1)=1$, i.e. $\Sigma_{i}r_i=1$. On utilise le 
principe local-global abstrait de recollement des égalités pour 
montrer que $r_ir_j=0$ pour $i\neq j$. En effet cette égalité est 
vraie dans le cas des anneaux locaux d'après la proposition \ref{prop 
cas local} puisque tous les~$r_i$ sont égaux~à $0$, sauf un égal~à 
$1$.  

\ni La même astuce fonctionne pour démontrer que les mineurs d'ordre 
$k+1$ de $r_kF$ sont nuls. Soit en effet $t$ un mineur d'ordre $k+1$ de 
$F$, on doit montrer que $tr_k=0$ dans~$A$. Si~$A$ est local, ou bien 
$r_k=0$ (si le rang est $h\neq k$), ou bien $r_k=1$ et $t=0$ (si le rang 
est $k$). Voyons enfin la dernière affirmation. Nous notons $F_s$ la 
matrice $F$ vue dans~$A_s$. Il est clair que le mineur diagonal $s$ est 
inversible dans~$A_s$ et on vient de voir que tous les mineurs d'ordre 
$k+1$ de $r_kF\simeq F_{r_k}$ sont nuls, donc a fortiori tous les mineurs 
d'ordre $k+1$ de $F_s$ sont nuls. On peut donc
 appliquer le lemme de la liberté (page \pageref{lem prf libre}) et 
déduire que le~$A_s$-module $M_s$ image de la matrice $F_s$ est libre.  
\end{proof}
%----end{proof------------------
%--- Theorem{localement libre}---------
\begin{theorem}[forme explicite des théorèmes \ref{th ptf-loc} et \ref{th 
ptf-idpt}] \label{theorem loc libre} ~

\ni Sous les mêmes hypothèses et avec les mêmes notations qu'au 
théorème \ref{th matproj}, pour chaque $k=0,\ldots,n$, la matrice 
$r_kF$, vue comme matrice~à coefficients dans $A_{r_k}$ (identifié~à 
$r_kA$) a pour image un module 
projectif de rang $k$ sur l'anneau $A_{r_k}$ {\em  (ceci prouve le 
théorème \ref{th ptf-idpt})}. 

\ni Si les  $t_{k,i}$ sont les mineurs diagonaux d'ordre $k$  de  $F$, et 
si on pose $s_{k,i}=r_kt_{k,i}$, la somme (pour $k$ fixé) des $s_{k,i}$ 
est égale~à $r_k$, et chaque module  $M_{s_{k,i}}$ est libre de rang 
$k$. Donc la famille de tous les $s_{k,i}$ a pour somme $1$ et convient 
pour le théorème \ref{th ptf-loc}. 
En particulier, pour tout module projectif de type fini~à $n$ 
générateurs, $2^n$ éléments $s_i$ suffisent pour le théorème 
\ref{th ptf-loc}.
\end{theorem}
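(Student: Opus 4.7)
Le plan est de tout déduire du théorème~\ref{th matproj} en exploitant deux expressions du polynôme caractéristique de $F$.

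Pour la première affirmation sur le rang de $M_{r_k}$, j'utilise l'expression de la remarque~\ref{rem polycar}: on a dans $A[X]$ l'identité $\det(XI_n-F)=\sum_{j=0}^n r_j X^{n-j}(X-1)^j$. En passant à l'anneau $A_{r_k}$, l'orthogonalité du sfio $(r_0,\ldots,r_n)$ fait s'évanouir tous les $r_j$ sauf $r_k$ qui devient égal à $1$, et il reste exactement $X^{n-k}(X-1)^k$ comme polynôme caractéristique de $F$ sur $A_{r_k}$. Par la définition~\ref{def ptfrangconstant} (condition (b')), ceci signifie que $M_{r_k}$ est projectif de rang constant $k$ sur $A_{r_k}$, ce qui établit le théorème~\ref{th ptf-idpt}.

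Pour la seconde affirmation, il faut d'abord prouver que $\sum_i s_{k,i}=r_k$. L'idée est d'identifier les coefficients de $X^{n-k}$ dans les deux écritures du polynôme caractéristique: la forme usuelle donne $(-1)^k \sigma_k$ où $\sigma_k=\sum_i t_{k,i}$ est la somme des mineurs diagonaux d'ordre $k$ de $F$, tandis que le développement de $X^{n-j}(X-1)^j$ fait apparaître $(-1)^k\binom{j}{k}$. On obtient ainsi l'identité $\sigma_k=\sum_{j=0}^n \binom{j}{k}r_j$ dans $A$. En multipliant par $r_k$ et en utilisant l'orthogonalité des idempotents, il ne subsiste que le terme $\binom{k}{k}r_k^2=r_k$, d'où $r_k\sigma_k=r_k$, c'est-à-dire $\sum_i s_{k,i}=r_k$.

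Je constate ensuite que chaque $s_{k,i}=r_k t_{k,i}$ est un mineur diagonal d'ordre $k$ de la matrice $r_k F$, car $r_k^k=r_k$ pour $k\geq 1$ (et trivialement pour $k=0$ avec la convention du mineur vide égal à $1$). Le théorème~\ref{th matproj} assure alors immédiatement que $M_{s_{k,i}}$ est libre de rang $k$ sur $A_{s_{k,i}}$. La sommation $\sum_{k}\sum_{i}s_{k,i}=\sum_k r_k=1$ montre que la famille totale convient pour le théorème~\ref{th ptf-loc}, et le dénombrement $\sum_{k=0}^n \binom{n}{k}=2^n$ donne la borne annoncée. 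La seule vraie difficulté réside dans le passage combinatoire menant à la formule $\sigma_k=\sum_j \binom{j}{k}r_j$; tout le reste est une application directe du théorème~\ref{th matproj} et des propriétés élémentaires d'un système fondamental d'idempotents orthogonaux.
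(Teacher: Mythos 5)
Votre démonstration est correcte et suit essentiellement la même voie que l'article, qui présente ce théorème comme conséquence immédiate du théorème~\ref{th matproj} : vous ne faites qu'expliciter les détails (passage à $A_{r_k}$ dans l'identité de la remarque~\ref{rem polycar}, puis $s_{k,i}=r_k^kt_{k,i}$ mineur diagonal de $r_kF$). Notez seulement que l'identité $\sigma_k=\sum_{j}{j\choose k}r_j$, que vous qualifiez de seule vraie difficulté, s'obtient encore plus directement en comparant les coefficients de $X^k$ dans $\det(\In+XF)=\sum_j r_j(1+X)^j$, c'est-à-dire dans la définition même des $r_j$.
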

%--- end-theorem-----------------------------------------
%----------------begin{proof------------------
\begin{proof} Conséquence immédiate du théorème \ref{th 
matproj}.
\end{proof}
%----------------end{proof------------------
%--- Comment{theorem loc libre}--------
\begin{comment} 
\label{comment theorem loc libre}
{\rm  Le théorème précédent donne une version
complètement explicite des théorèmes~1 et 2. Nous sommes ici
dans une situation typique que se proposait de ``résoudre'' le
programme de Hilbert. Un énoncé explicite concret a été
démontré par des méthodes abstraites a priori peu fiables.
Nous donnons dans la suite deux moyens de récupérer une démonstration
entièrement fiable de l'énoncé concret. L'argument parfois
cité que tout théorème d'arihmétique prouvé dans ZFC peut
également être prouvé sans recours~à l'axiome du choix offre
au moins trois inconvénients. Le premier (mineur) est qu'une
analyse assez poussée doit être menée pour se convaincre qu'un
théorème comme le théorème~\ref{th ptf-idpt}~a, en fait, la
signification d'un théorème d'arithmétique. Le deuxième
(nettement plus sérieux) est que le recours~à l'axiome du choix
n'est pas le seul ingrédient non constructif dans la démonstration qui~a
été fournie. Le troisième (redoutable) est que rien ne
garantit que ZFC soit une théorie cohérente. } 
\end{comment}
%--- end-comment-----------------------------------------

Un autre corollaire du théorème \ref{th matproj} est le suivant.
%--- Theorem{th ptf rang constant}-----
\begin{theorem}[polynôme caractéristique des matrices de projection de rang 
constant] \label{th ptf rang constant} ~

\noindent 
Soit  $F\in{\rm Mat}_n(A)$ avec   $F^2=F$ et $M$ le module projectif de 
type fini image de $F$ dans $A^n$.
Alors le module $M$ est de rang $k$ si et seulement si le polynôme 
caractéristique de $F$ est égal~à $(X-1)^kX^{n-k}$. Dans ce cas tous 
les mineurs d'ordre $k+1$ de $F$ sont nuls.
\end{theorem}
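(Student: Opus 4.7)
Pour ce théorème je partirais du théorème \ref{th matproj} et de la remarque \ref{rem polycar}. Le sens « si » est immédiat par la définition \ref{def ptfrangconstant} : si le polynôme caractéristique vaut exactement $(X-1)^k X^{n-k}$, il le vaut a fortiori modulo les nilpotents, donc $M$ est de rang constant $k$.

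Pour la réciproque, je développerais le polynôme caractéristique dans la base $(X^{n-i}(X-1)^i)_{0 \le i \le n}$ signalée dans la remarque \ref{rem polycar} :
$$\det(X\In - F) = \sum_{i=0}^n r_i X^{n-i}(X-1)^i,$$
où $(r_0,\ldots,r_n)$ est, d'après le théorème \ref{th matproj}, un système fondamental d'idempotents orthogonaux. Comme la matrice de changement de base avec la base monomiale est triangulaire unipotente à coefficients entiers, l'unicité de la décomposition persiste après passage au quotient par n'importe quel idéal, en particulier modulo le nilradical. L'hypothèse que $M$ est de rang constant $k$ se traduit donc par : chaque $r_i$ pour $i \ne k$ est nilpotent, et $r_k - 1$ est nilpotent.

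Le cœur de l'argument --- et la seule observation conceptuellement nouvelle --- sera alors qu'un idempotent nilpotent est nul : si $r^2 = r$ et $r^m = 0$, alors $r = r^2 = \cdots = r^{2^\ell} = 0$ dès que $2^\ell \ge m$. On obtient donc $r_i = 0$ pour $i \ne k$, et la relation $\sum_i r_i = 1$ fournit $r_k = 1$. Par suite $\det(X\In - F) = X^{n-k}(X-1)^k$ exactement.

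Enfin, la dernière assertion découle sans travail supplémentaire : le théorème \ref{th matproj} affirme la nullité des mineurs d'ordre $k+1$ de $r_k F$, et comme $r_k = 1$ on a $r_k F = F$. Aucune vraie difficulté n'est à prévoir : tout le contenu non trivial (système fondamental d'idempotents, nullité locale des mineurs) a déjà été payé dans le théorème \ref{th matproj} ; il ne reste qu'à exploiter la rigidité des idempotents pour transformer les énoncés « à nilpotents près » en égalités exactes.
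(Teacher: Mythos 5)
Votre démonstration est correcte et suit essentiellement la même voie que celle de l'article : on traduit l'hypothèse de rang constant en la nilpotence des idempotents $r_h$ pour $h\neq k$ issus du théorème \ref{th matproj}, on utilise le fait qu'un idempotent nilpotent est nul (d'où $r_k=1$), puis on conclut pour les mineurs d'ordre $k+1$ en réappliquant le théorème \ref{th matproj}. Vous ne faites qu'expliciter des détails (changement de base triangulaire modulo le nilradical, $r_kF=F$) que l'article laisse implicites.
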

%--- end-theorem-----------------------------------------

%----begin{proof------------------
\begin{proof} La condition est clairement suffisante. Montrons 
qu'elle est nécessaire. Nous supposons donc que le polynôme 
caractéristique de $F$ est égal,~à des nilpotents près, au 
polynôme $(X-1)^kX^{n-k}$. En appliquant le théorème \ref{th 
ptf-idpt}, cela implique que pour $h\neq k$ l'idempotent $r_h$ est 
nilpotent, donc nul. En ce qui concerne les mineurs d'ordre $k+1$ de $F$, 
on peut alors appliquer le théorème \ref{th matproj}.
\end{proof}
%----end{proof------------------

%--- Comment{th ptf rang constant}-----
\begin{comment} 
\label{comment th ptf rang constant}
{\rm  Notez que dans la mesure où le théorème peut être prouvé 
constructivement, ceci nous donne une version constructivement 
satisfaisante de la proposition \ref{prop ptfrangconstant} (on ne 
considère pas le (a), et dans les autres conditions équivalentes, on 
peut évacuer les nilpotents). Nous verrons encore un peu mieux~à la 
section \ref{subsec relec const}.
} 
\end{comment}
%---end-comment-----------------------------------------

Un dernier corollaire immédiat dans le même style est le suivant.
(cf. théorème 2 dans \cite{Bou} chap.~II~{\S}5).
%--- Theorem{ rang constant } ---------
\begin{theorem}[caractérisation locale des modules projectifs de rang 
constant] \label{theorem rang constant}~

\noindent  
Un~$A$-module $M$ engendré par $n$ éléments est projectif de rang 
constant $k$ si et seulement si 
il existe un entier $m\leq {n\choose k}$ et des éléments $s_1,\ldots 
,s_m$ de~$A$ tels que, d'une part $s_1A+\cdots +s_mA=A$, et d'autre part 
les modules $M_{s_i}$ soient libres de rang $k$.
\end{theorem}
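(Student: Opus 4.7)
Le plan est de déduire immédiatement le résultat des théorèmes \ref{th matproj}, \ref{theorem loc libre} et \ref{th ptf rang constant}, en traitant séparément les deux implications.

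Pour le sens direct, je supposerais $M$ projectif de rang constant $k$ et je le représenterais comme image d'une matrice de projection $F\in\Mat_n(A)$, ce qui est possible puisque $M$ est engendré par $n$ éléments. D'après le théorème \ref{th ptf rang constant}, le polynôme caractéristique de $F$ vaut exactement $(X-1)^k X^{n-k}$, ce qui, compte tenu de la remarque \ref{rem polycar} (les $X^{n-i}(X-1)^i$ formant une base des polynômes de degré $\le n$), entraîne $r_k=1$ et $r_j=0$ pour $j\ne k$ dans la décomposition du théorème \ref{th matproj}. Le théorème \ref{theorem loc libre} fournit alors, pour les $\binom{n}{k}$ mineurs diagonaux $t_{k,i}$ d'ordre $k$ de $F$, les éléments $s_{k,i}=r_k t_{k,i}=t_{k,i}$ qui vérifient $\sum_i s_{k,i}=r_k=1$, et chaque $M_{s_{k,i}}$ est libre de rang $k$. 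En écartant les $s_{k,i}$ nuls, on obtient au plus $\binom{n}{k}$ éléments comaximaux convenables.

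Pour la réciproque, je partirais d'éléments comaximaux $s_1,\ldots,s_m$ avec $M_{s_i}$ libre de rang $k$ sur $A_{s_i}$. Le principe local-global concret \ref{prop ptf-local-concret} garantit d'abord que $M$ est projectif de type fini sur $A$\,; soit alors $F\in\Mat_n(A)$ une matrice de projection d'image isomorphe à $M$. Dans chaque localisé $A_{s_i}$, l'image de $F_{s_i}$ est isomorphe à $M_{s_i}\simeq A_{s_i}^k$, donc le lemme \ref{lem polycar ptf} appliqué à $F_{s_i}$ et à la matrice $\I_k$ donne $\det(X\I_n-F_{s_i})=X^{n-k}(X-1)^k$ dans $A_{s_i}[X]$. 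Le principe local-global concret \ref{plcc ring} des égalités, appliqué coefficient par coefficient, permet alors de conclure que le polynôme caractéristique de $F$ dans $A[X]$ est $(X-1)^k X^{n-k}$, d'où $M$ de rang constant $k$ par la définition \ref{def ptfrangconstant}.

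L'essentiel se réduit donc à un enchaînement soigneux des résultats antérieurs, sans ingrédient nouveau. Le seul point à ne pas négliger est la vérification que le polynôme caractéristique se recolle coefficient par coefficient via le principe local-global concret des égalités, ce qui est immédiat puisque la localisation commute à la prise du déterminant.
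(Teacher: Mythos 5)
Votre démonstration est correcte et suit exactement la voie que l'article sous-entend : le théorème y est présenté comme corollaire immédiat (sans preuve rédigée) des théorèmes \ref{th matproj}, \ref{theorem loc libre} et \ref{th ptf rang constant}, et votre enchaînement — sens direct via $r_k=1$ et les mineurs diagonaux $s_{k,i}=t_{k,i}$, réciproque via le principe local-global concret \ref{prop ptf-local-concret}, le lemme \ref{lem polycar ptf} et le recollement concret des égalités \ref{plcc ring} coefficient par coefficient — est précisément ce remplissage attendu.
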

%---end-theorem-----------------------------------------
Nous terminons cette section par une proposition facile.
%--- Proposition{unicité du sfio} --- 
\begin{proposition}[quand le localisé en un élément de~$A$ 
est de rang constant] 
\label{prop sfio unic}~

\noindent 
Soit $F$ une matrice de projection ayant pour image un module $M$, et 
$r_0,\ldots,r_n$ le sfio défini au théorème~\ref{th matproj}.\\ 
Soit $s$ un élément de~$A$. Pour que le localisé $M_s$ soit 
projectif de rang $h$ il faut et suffit que $r_h/1=1$ dans~$A_s$, 
c.-à-d. que $r_hs^m=s^m$ dans~$A$ pour un certain exposant $m$. Si $s$ 
est un idempotent, cela signifie que $r_h$ divise $s$. \\
Enfin si $s_0,\ldots,s_n$ est un sfio tel que chaque $M_{s_h}$ soit de 
rang $h$, alors $r_h=s_h$ pour $h=0,\ldots ,n$.
\end{proposition}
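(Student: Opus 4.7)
Le plan est de caractériser le rang des localisés $M_s$ directement en termes des $r_i$, puis d'exploiter cette caractérisation et l'arithmétique des sfio. La première étape consistera à établir le lemme clef suivant : \emph{le module $M$ est de rang constant $h$ si et seulement si $r_h = 1$} (et donc, par orthogonalité du sfio, $r_i = 0$ pour $i \neq h$). Ceci découlera de la Remarque \ref{rem polycar}, qui fournit la décomposition
$$\det(X\In - F) = \sum_{i=0}^{n} r_i X^{n-i}(X-1)^i,$$
combinée au Théorème \ref{th ptf rang constant} selon lequel $M$ est de rang $h$ si et seulement si $\det(X\In - F) = (X-1)^h X^{n-h}$. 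La conclusion viendra par unicité de la décomposition dans la base $(X^{n-i}(X-1)^i)_{0\le i\le n}$ du module des polynômes de degré $\le n$.

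J'appliquerai ensuite ce lemme à $F$ vue comme matrice à coefficients dans $A_s$ : elle reste une matrice de projection, son image s'identifie à $M_s$, et le sfio qui lui est associé est $(r_0/1, \ldots, r_n/1)$. Ainsi $M_s$ est de rang $h$ si et seulement si $r_h/1 = 1$ dans $A_s$, ce qui équivaut à l'existence d'un entier $m$ tel que $s^m(1 - r_h) = 0$ dans $A$, soit $r_h s^m = s^m$. Lorsque $s$ est idempotent on a $s^m = s$, et la condition $s = r_h s$ signifie précisément $s \in r_h A$, c'est-à-dire que $r_h$ divise $s$.

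Pour la dernière assertion, soit $(s_0, \ldots, s_n)$ un sfio avec chaque $M_{s_h}$ de rang $h$. Chaque $s_h$ étant idempotent, la deuxième étape donne $s_h = r_h s_h$ pour tout $h$. On écrit alors
$$r_h = r_h \sum_{j=0}^{n} s_j = \sum_{j=0}^{n} r_h s_j,$$
et pour $j \neq h$ on a de même $s_j = r_j s_j$, d'où $r_h s_j = r_h r_j s_j = 0$ grâce à l'orthogonalité $r_h r_j = 0$. Il ne reste que $r_h = r_h s_h = s_h$. Le seul point un peu subtil est l'unicité de la décomposition polynomiale utilisée à la première étape, conséquence immédiate de la triangularité de la famille $(X^{n-i}(X-1)^i)_i$ par rapport à la base canonique ; tout le reste s'enchaîne par localisation et manipulations directes d'idempotents orthogonaux.
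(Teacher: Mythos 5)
Votre démonstration est correcte et suit exactement la voie que l'article sous-entend en qualifiant l'énoncé de «~proposition facile~» (il ne donne d'ailleurs aucune démonstration explicite) : caractériser «~$M$ de rang $h$~» par «~$r_h=1$~» via la décomposition de la remarque~\ref{rem polycar} et le théorème~\ref{th ptf rang constant}, constater que le sfio se spécialise en $(r_i/1)$ dans $A_s$, puis conclure par l'arithmétique des idempotents orthogonaux. Les trois étapes (équivalence avec $r_hs^m=s^m$, cas idempotent, unicité du sfio $s_h=r_h$) sont complètes et sans lacune.
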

%--- end-proposition----------------------------------------

%---- SUBsection{cas generique}--------------------
\subsection{Cas générique} \label{subsec cas generique}
%-------------------------------------------

Qu'est-ce que nous appelons le cas générique, concernant un module 
projectif~à $n$ générateurs~? 
On considère l'anneau $A=\Bn =\Z[(f_{i,j})_{1\leq i,j\leq n}]/\cj$, où 
$\cj$ 
est l'idéal défini par les $n^2$ relations obtenues en écrivant 
$F^2=F$. Dans cet anneau $\Bn$, nous avons la matrice $F=(f_{i,j})$ dont 
l'image dans $\Bn^n$ est ce qui mérite d'être appelé 
{\sl  le module projectif générique~à $n$ générateurs}.

Reprenons les notations du théorème \ref{th matproj} dans ce cas 
particulier. Dire que $r_hr_k=0$ dans $\Bn$ \hbox{(pour $0\le h\not= k\le n$)} 
signifie que, dans $\Z[{\bf  f}]=\Z[(f_{i,j})_{1\leq i,j\leq n}]$ 
$$r_h({\bf  f})r_k({\bf  f})
\in \cj\qquad (*)$$
Cela implique une identité algébrique qui permet d'exprimer cette 
appartenance. Cette identité algébrique est naturellement valable dans 
tous les anneaux commutatifs.
Il est donc clair que si l'appartenance $(*)$ est vérifiée dans le cas 
générique, elle implique  $r_hr_k=0$ pour n'importe quelle matrice de 
projection pour n'importe quel anneau commutatif.

La même chose vaut pour les égalités $r_hs=0$ lorsque $s$ est un 
mineur d'ordre $h+1$.

En résumé: si le théorème \ref{th matproj} est vérifié dans 
le cas générique, il est vérifié dans tous les cas.

Le seul ingrédient non constructif dans la démonstration du théorème 
\ref{th matproj} était l'appel au principe local-global abstrait 
\ref{plca ring}. Dans le commentaire après ce théorème, nous avons 
indiqué que le théorème admettait une démonstration constructive pour 
certains anneaux, en particulier pour les anneaux $\Z[x_1,\ldots,x_n]/\ci$ 
lorsque $\ci$ est donné comme un idéal de type fini.

Ainsi la démonstration classique est constructive dans le cas générique modulo
un gros travail sur les idéaux des anneaux $\Z[x_1,\ldots,x_n]$. 
Donc les théorèmes \ref{th ptf-loc}, \ref{th ptf-idpt}, \ref{th 
matproj}, \ref{theorem loc libre}, \ref{th ptf rang constant} et 
\ref{theorem rang constant} sont constructivement prouvés. 

\smallskip
Dans la section suivante, nous expliquons comment il est possible de 
suivre de beaucoup plus près la démonstration classique. Autrement dit encore, 
l'appartenance $(*)$ peut être  construite sans appel~à la (belle) 
théorie constructive de la noetherianité et des décompositions 
primaires pour l'anneau $\Z[x_1,\ldots,x_n]$.  
%----- SECTION{Local-global constructif debut}---------
\section{Le contenu constructif du principe local-global} 
\label{sec local-global}
%-------------------------------------------

Notre but ici est donc de faire une relecture constructive de la démonstration du 
théorème~\ref{th matproj} dans le cas général (et non plus le cas 
générique) ``sans autres ingrédients algorithmiques que ceux 
contenus dans la démonstration classique’’. Cette affirmation quelque peu brutale 
ne doit pas être prise comme une boutade ni comme une provocation. 
Nous prétendons réellement débusquer un contenu algorithmique 
précis dans les {\sl  démonstrations} qui utilisent le principe local-global 
abstrait~\ref{plca ring}, même quand les idéaux premiers ne peuvent 
absolument pas être explicités en tant que tels. 

%---- SUBsection{L'idée générale}------------
\subsection{L'idée générale} 
\label{subsec idgen}
%---------------------------------------------------------------
Soit~$A$ un anneau commutatif et $a$ un élément de~$A$ qui est le 
résultat d'un certain calcul fait sous certaines hypothèses. Le 
principe local-global abstrait le plus élémentaire nous dit que~$a$ 
est nul dans~$A$ si et seulement si $a/1$ est nul dans tous les $\Ap$ 
(pour $\cp\in\Spec(A)$).

Supposons que nous ayions une démonstration que $a/1$ est nul dans tous les 
$\Ap$. Comme toute démonstration, elle est de nature finie. En particulier, 
l'{\sl  axiome des anneaux locaux}
%----begin $$------------------
$$\forall s,t\in A \qquad (\; s+t=1 \; \Rightarrow\;  s \;\; {\rm  ou} 
\;\; t\;\; {\rm est \; inversible }) $$
%----end $$------------------
n'est utilisé qu'un nombre fini de fois dans la démonstration (cela est en 
relation étroite avec le théorème qui affirme que $\Spec(A)$ est 
quasicompact).

Au bout du compte la démonstration aura produit des éléments $s_1,\ldots 
,s_m$ de~$A$ qui vérifient $s_1A+\cdots +s_mA=A$ et pour lesquels $a/1$ 
est nul dans chaque $A_{s_i}$.

À condition d'être capable de suivre la démonstration de fa\c con 
suffisamment précise, on pourra donc conclure que $a=0$ dans~$A$ en 
utilisant cette fois-ci le principe local-global {\sl  concret}~\ref{plcc 
ring}.

À vrai dire, cette idée générale semble si simple et si
naturelle qu'il est étonnant qu'elle n'ait pas encore été
exploitée systématiquement. En fait, lorsque l'on essaie de mener
ce travail en détail, on voit apparaître un obstacle, c'est
que la plupart des démonstrations usuelles, même très simples, sont
néanmoins un peu trop compliquées pour pouvoir être
traitées directement selon l'idée générale précédente.
Par exemple, la démonstration classique usuelle du lemme de la liberté
locale utilise de manière cruciale le fait que le corps résiduel
est discret (cf. la première démonstration page~\pageref{lelilo}), ce qui
est un cas particulier d'usage du tiers exclu en logique classique. 

Il s'avère cependant que l'usage du tiers exclu n'est pas un
obstacle bien grave: l'usage de la logique classique est inoffensif
lorsqu'il s'agit de prouver des faits suffisamment concrets~! (cf.~\cite{clr} théorème 1.1). 

%---- SUBsection{SAD}------------------------------
\subsection{Structures algébriques dynamiques} 
\label{subsec SAD}
%------------------------------------------------------------------

Pour mettre en œuvre notre idée générale, nous aurons besoin de la 
notion de {\sl  structure algébrique dynamique} (cf. \cite{Lom}  et 
\cite{clr}).

L'idée qui gouverne la définition d'une structure algébrique 
dynamique est la suivante: il s'agit d'une structure algébrique 
incomplètement spécifiée, dans laquelle on calcule selon des 
règles de nature algébrique simple, celles qui définissent 
axiomatiquement une structure algébrique ordinaire. Le fait que la 
structure est incomplètement spécifiée introduit une arborescence 
dans les calculs.

Par exemple si on dit: voici un corps engendré par 2 éléments $a$ 
et $b$ qui vérifient \hbox{$a^2+b^2+1=0$}, les calculs qui s'ensuivent peuvent 
faire apparaître dans les différentes branches n'importe quelle 
situation correspondant~à cette ``présentation''. Dans un premier 
embranchement $a$ sera nul et dans un autre,~$a$ sera inversible, puisque 
tout élément dans un corps est nul ou inversible. D'autres 
embranchements peuvent apparaître si~à un moment donné du calcul, 
on se pose par exemple la question de savoir si 5 est nul ou inversible.

Autre exemple, qui nous concerne directement ici. Si on dit: voici un 
anneau~$A$ complè\-tement spécifié en tant qu'anneau, mais 
appliquons lui les règles de calcul valables dans les anneaux locaux, 
les calculs vont faire apparaitre des embranchements chaque fois qu'on~a 
besoin d'utiliser l'axiome des anneaux locaux. On est alors en train de 
calculer ce qui se passe dans les différents localisés~$\Ap$ de~$A$. 
Différents cas peuvent se produire: ils sont pris en compte dans les 
différentes branches du calcul. Si la démonstration aboutit, un nombre fini de feuilles seulement apparaitront dans l'arbre du calcul. Cela veut dire 
qu'on n'a pas eu besoin de construire vraiment des localisés~$\Ap$, mais 
seulement des localisés~$A_s$ (qui en général ne sont pas des 
anneaux locaux). En langage savant: on~a recouvert le spectre de~$A$ par 
un nombre fini d'ouverts $U_s=\{\cp\in\Spec(A);\;s\notin \cp\}$. La 
différence entre le point de vue classique et le point de vue 
constructif est alors seulement que le mathématicien classique ``admet'' 
que les idéaux~$\cp$ existent en vertu (d'une version faible) de 
l'axiome du choix, tandis que la mathématicienne constructive (qui ne 
croit qu'à ce qu'elle voit) veut bien ``faire comme si'' ils existaient, 
puisque la seule chose importante dans ce spectre, ce ne sont pas ses 
points, mais ses recouvrements ouverts finis.   

\smallskip
Tout ceci semble avoir quelque rapport avec les tableaux sémantiques en 
logique. Des rapports étroits existent également avec la théorie des 
topos cohérents (cf.~\cite{clr}) et avec la théorie des esquisses 
(cf.~\cite{dr1} et~\cite{dr2}). Notre première inspiration a été fournie 
par l'évaluation dynamique de la clôture algébrique d'un corps ``à 
la D5'' (cf. \cite{ddd}) qui réalisait le fait remarquable suivant: 
{\sl  calculer de manière s\^ure dans la clôture algébrique d'un corps 
arbitraire alors même que cette clôture algébrique ne peut pas être 
construite (pour un corps général)}.
%---- SUBsection{Anneau versus anneau local}-------
\subsection{Anneau versus anneau local (dynamiques)} 
\label{subsec aval}
%------------------------------------------------------------------

La structure d'anneau (commutatif) est la structure algébrique usuelle 
d'anneau commutatif, basée sur $(1,0,+,-,\times)$.
Nous considérons une structure d'anneau comme une structure ``où on 
calcule'' et pour laquelle on utilise le seul prédicat ``$x=0$'' ~à 
l'exclusion de tous autres prédicats plus compliqués.
L'égalité $t=t'$ est elle-même considérée simplement comme une 
autre écriture pour $t-t'=0$.

Se donner une {\sl  présentation d'anneau}, c'est donner un ensemble $G$ 
de ``générateurs'' et un ensemble~$\Rzero$ de ``relations''  qui sont 
toutes de la forme $t=0$ avec $t$ un élément de $\zg$. Dans la suite 
pour simplifier, nous considérons $\Rzero$ simplement comme une partie 
de $\zg$.

La plupart des axiomes d'anneau commutatifs sont absorbés par les 
calculs dans $\zg $ et il nous reste alors les axiomes suivants, qui sont 
les règles que nous pourrons appliquer dans nos calculs.
%--------------------begin array-------------
$$\begin{array}{rlccl} 
&\vdash\;  0= 0&\qquad&\qquad&A(1) \\ 
(x = 0,\ y=0)&\vdash\;  x+y = 0&\qquad&\qquad&A(2) \\ 
x = 0 &\vdash\;   xy= 0&\qquad&\qquad& A(3) \\
\end{array}$$
%---------------------end array--------------

Le but est de calculer, pour la présentation $(G;\Rzero)$, tous les 
termes $t\in\zg $ pour lesquels $t=0$ peut être prouvé.
Ce type de calcul ne comporte aucun embranchement, ce qui fait que nous 
sommes dans un cadre ``non dynamique'', même si on peut penser la 
structure comme une structure dynamique. En fait, la différence, 
lorsque l'on pense la structure comme dynamique, c'est qu'on ne prouve que 
des égalités $t=0$, et rien d'autre. On ne peut pas prouver, par 
exemple $1\not= 0$, parce que le prédicat $x\not= 0$ n'a pas été 
introduit, et on n'a pas dit selon quelles règles on le manipulerait.

Un anneau dynamique n'est rien d'autre qu'une présentation $(G;\Rzero)$ 
(où $\Rzero)$ est une partie de~$\zg$)~à partir de laquelle on fait 
les calculs conformément aux 3 axiomes $A(1,2,3)$ des anneaux. Du point 
de vue des égalités $t=0$, il n'y a aucune différence avec la 
structure d'anneau usuelle (non dynamique), comme le dit la proposition 
triviale suivante.

%--- Proposition{faits anneau}---------
\begin{proposition} 
\label{prop faits anneau}
Soit $(G;\Rzero)$ un anneau dynamique, et $t\in\zg$. Alors $t=0$ est 
prouvable si et seulement si $t$ est dans l'idéal $\Izero$ de $\zg $ 
engendré par $\Rzero$.
\end{proposition}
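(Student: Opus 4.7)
The plan is to prove the two implications separately by straightforward inductions, exploiting the tight match between the three axiom schemes $A(1)$, $A(2)$, $A(3)$ and the defining closure properties of the ideal generated by $\Rzero$.

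For the ``only if'' direction, I would argue by induction on the derivation of $t=0$ from the hypotheses in $\Rzero$ using the rules $A(1)$--$A(3)$. The base cases are the axiom $A(1)$, which asserts $0=0$ and for which $0\in\Izero$ is immediate, and the hypotheses themselves, which by definition of $\Izero$ belong to it. For the inductive step, a conclusion obtained by $A(2)$ from provable $x=0$ and $y=0$ is $x+y=0$; the inductive hypothesis gives $x,y\in\Izero$, and closure of $\Izero$ under addition yields $x+y\in\Izero$. A conclusion obtained by $A(3)$ from provable $x=0$ is $xy=0$ for some $y\in\zg$, and closure of $\Izero$ under multiplication by arbitrary elements of $\zg$ yields $xy\in\Izero$.

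For the ``if'' direction, I would take $t\in\Izero$ and use the standard description of the ideal generated by $\Rzero$: there exist $r_1,\ldots,r_n\in\Rzero$ and $a_1,\ldots,a_n\in\zg$ with $t=\sum_{i=1}^n a_i r_i$. Each equality $r_i=0$ is a hypothesis of the presentation, hence derivable; applying $A(3)$ to each gives a derivation of $a_i r_i=0$, and then $n-1$ applications of $A(2)$ assemble these into a derivation of $\sum_i a_i r_i=0$, i.e.\ of $t=0$. The empty-sum case $t=0$ is covered by $A(1)$.

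There is no real obstacle here, which is why the proposition is called trivial; the only point worth mentioning is that $\Izero$ is automatically closed under subtraction because $\zg$ contains $-1$, so that whenever $x=0$ is derivable, $A(3)$ also derives $(-1)x=-x=0$. Thus the purely equational calculus defined by $A(1)$--$A(3)$ computes exactly membership in the ideal generated by the relations, which is what legitimates identifying the dynamic ring $(G;\Rzero)$ with the usual quotient $\zg/\Izero$ as far as equalities of the form $t=0$ are concerned.
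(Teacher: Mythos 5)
Your proof is correct and is precisely the routine double induction (soundness of $A(1)$--$A(3)$ with respect to ideal membership, and derivability of every finite combination $\sum a_i r_i$) that the paper deems too trivial to write out — it states the proposition without proof. Nothing is missing; your remark on negation via $A(3)$ with $-1$ is a harmless but sensible clarification.
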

%--- end-proposition----------------------------------------

Le fait que, lorsque la présentation est finie, il existe une méthode 
algorithmique pour tester la prouvabilité des faits n'a rien d'évident.

Cependant, en l'absence de toute théorie constructive des bases de 
Gr\"obner, ou bien encore dans le cas d'une présentation non finie, la 
t\^{a}che de déterminer les faits prouvables peut être grandement 
facilitée par l'usage d'un analogue du principe local-global abstrait, 
que nous pouvons formuler constructivement dans le cadre des structures 
dynamiques.

Tout d'abord nous devons introduire la notion d'anneau local dynamique.
Un {\sl  anneau local dynamique} est simplement un anneau dynamique où 
on~a le droit d'appliquer une nouvelle règle de calcul, donnée par 
l'axiome des anneaux locaux, écrit comme suit:
%--------------------begin array-------------
$$\begin{array}{rlccl} 
x + y  = 1&\vdash\;\; (\exists u\;ux =1)\;  \lor \; (\exists v\;vy=1) 
&\qquad&\qquad&AL \\ 
\end{array}$$
%---------------------end array--------------

Comment cet axiome doit-il être appliqué~? Chaque fois qu'on~a 
prouvé, dans une branche du calcul, une égalité $t+t'=1$, on~a la 
possibilité d'ouvrir deux sous branches, dans la première un nouveau 
paramètre $u$ est introduit (i.e. un paramètre qui ne figure ni dans 
$G$ ni parmi les paramètres précédemment introduits dans la branche) 
ainsi qu'une nouvelle relation $ut=1$, dans la seconde branche on 
introduit un nouveau paramètre $v$ et la nouvelle relation $vt'=1$.
Si $P$ est l'ensemble des paramètres introduits au dessus d'un certain 
point de notre calcul arborescent, les termes qui peuvent être 
considérés~à cet endroit sont les éléments de $\Z[G\cup P]$.

Un tel calcul arborescent, arrêté au bout d'un temps fini, s'appelle 
une {\sl  évaluation dynamique} de  l'anneau local (dynamique) 
$(G;\Rzero)$. À chaque feuille du calcul ont été prouvées des 
égalités $t=0$ avec $t\in\Z[G\cup P]$.

Quand un fait $t=0$, avec $t\in\zg$, est-il déclaré prouvé pour un 
anneau local dynamique~? C'est {\sl  lorsqu'il est prouvé~à toutes les 
feuilles d'une évaluation dynamique de l'anneau local}.

Le principe local-global abstrait admet maintenant une {\sl  
interprétation} constructive: c'est l'objet de la proposition (facile 
mais non triviale) suivante.

%--- Principe local global dynamique 1 
\begin{plcd}[recollement dynamique des égalités, première version] 
\label{plcd faits anneau local} ~

\noindent 
Pour prouver un fait $t=0$ dans un anneau, vous pouvez aussi bien faire 
comme si l'anneau était local. \\ 
De manière plus formelle:\\
Soit $(G;\Rzero)$ un anneau dynamique, et $t\in\zg$. Si le fait $t=0$ est 
prouvé dans l'anneau local dynamique  $(G;\Rzero)$ alors il est 
également prouvable dans l'anneau dynamique~$(G;\Rzero)$: ajouter 
l'axiome des anneaux locaux ne permet pas de prouver plus de faits. \\
Ou si l'on préfère:  l'évaluation dynamique d'un anneau comme 
anneau local dynamique est une procédure légitime pour prouver les 
faits $t=0$.
\end{plcd}
%--- end-plcd-----------------------------------------
%--- Remark{plcd1}---------------------
\begin{remark} 
\label{rem plcd1}
L'énoncé précédent doit être compris de manière 
constructive: nous vous donnons une procédure uniforme qui transforme 
toute démonstration dynamique d'un fait $t=0$ dans un anneau local dynamique  de 
présentation $(G;\Rzero)$ en une démonstration dynamique du même fait $t=0$ 
dans l'anneau dynamique ayant la même présentation  $(G;\Rzero)$.
\end{remark}
%--- end-remark-----------------------------------------

%-----------------begin proof------------------
\begin{proof}
Il suffit de montrer que l'utilisation une fois de l'axiome des anneaux 
locaux ne permet pas de prouver de nouveaux faits.

Soit $\Izero$ l'idéal de $\zg$ engendré par $\Rzero$ et $s,t,p\in\zg$.
Supposons que $s+t-1\in\Izero$. Supposons également sans perte de 
généralité que $s$ et $t$ ne sont pas nuls dans $\zg$. Appliquons 
l'axiome des anneaux locaux avec $s+t=1$, et supposons qu'ensuite, nous 
sachions prouver $p=0$ dans chacune des deux branches créées.

Dans la première branche on a introduit le paramètre $u$ avec la 
relation $us-1=0$, donc si on prouve $p=0$ c'est qu'on~a une égalité 
dans $\zg[u]$:
%-----------------begin $$------------------
$$p\=i_0(u)+(us-1)r_0(u)
$$
%-----------------end $$------------------
avec $i_0\=i_{0,0}+i_{0,1}u+\cdots+i_{0,n}u^n\in\Izero[u]$  et 
$r_0(u)\in\zg[u]$.
Nous utilisons le symbole $\=$ pour désigner une égalité dans $\zg$, 
c.-à-d. une identité algébrique, en vue de distinguer cette 
égalité du prédicat $=0$ dans la structure algébrique dynamique.
En multlipliant par $s^n$ et en réduisant dans $i_0(u)$ les $u^ks^k$ 
modulo $us-1$, on obtient une nouvelle égalité dans $\zg[u]$:
%-----------------begin $$------------------
$$s^np\=i_1+(us-1)r_1(u)
$$
%-----------------end $$------------------
avec $i_1\in\Izero$ et $r_1(u)\in\zg[u]$. Mais comme la variable $u$ ne 
figure que dans le dernier produit, on~a~$r_1=0$, et donc 
$$s^np\=i_1\eqno(1)$$
(ceci est couramment appelé le {\sl  truc de Rabinovitch}).

De la même manière, dans la seconde branche, on obtient une 
égalité 
$$t^mp\=i_2\eqno (2)$$
avec $i_2\in\Izero$.

Il reste~à recoller ces deux égalités selon la procédure qui a 
été constamment utilisée dans les démonstrations ``local-global 
concrètes''. Précisément, on considère l'égalité $s+t\=1+i_3$ 
dans $\zg$ avec $i_3\in \Izero$. Cela donne, en élevant~à la puissance 
$m+n$, 
$$as^n+bt^m\=1+i_4\eqno (3)$$
dans $\zg$ avec $i_4\in \Izero$. En combinant $(1)$, $(2)$ et $(3)$ on 
obtient 
$p\in\Izero$.
\end{proof}
%-----------------end proof------------------

%---- SUBsection{Relectures constructives}---------
\subsection{Relectures constructives d'énoncés et de démonstrations} 
\label{subsec relec const}
%------------------------------------------------------------------

Muni de cette interprétation constructive  du principe local-global 
abstrait \ref{plca ring}, pouvons-nous maintenant directement traiter la 
démonstration du théorème \ref{th matproj}~?

Une inspection détaillée de cette démonstration nous montre que ce que nous 
avons~à faire se résume en deux grandes étapes:

\ni --- fournir une démonstration du lemme de la liberté locale (page 
\pageref{lelilo}) sous forme d'une démonstration par évaluation dynamique~; la 
troisième démonstration que nous avons indiquée, la démonstration à la Azuyama, est 
justement de ce type.

\ni --- dans la démonstration du théorème \ref{th matproj} utiliser le 
principe local-global dynamique \ref{plcd faits anneau local} en lieu et 
place du principe local-global abstrait~\ref{plca ring}.  

\ss {\sl  Ainsi nous avons gagné notre pari: nous obtenons une démonstration 
entièrement constructive du théorème  \ref{th matproj}, et par 
exemple, dans le cas générique, cette démonstration construit les identités 
algébriques recherchées. En outre cette démonstration est une traduction 
``mot~à mot'' de la démonstration classique. Nous avons seulement~à remplacer 
le recollement abstrait des égalités par le recollement dynamique des 
égalités. Notez aussi que, du point de vue classique, ces deux 
théorèmes de recollement sont équivalents}.

\ms Nous traiterons la question: ``comment faire avec une démonstration moins 
élémentaire (que celle par Azuyama) du lemme de la liberté locale~?'' 
dans la section~\ref{sec iclg2}.

\medskip
Signalons aussi le fait remarquable suivant (qui court-circuite notre 
constructivisation de la démonstration classique): 

\ni {\sl  la réalisation dynamique de la démonstration du lemme de la liberté 
locale dans la théorie des anneaux locaux fournit, pour un module 
projectif de type fini $M$ sur un anneau arbitraire (lorsqu cet anneau est 
évalué dynamiquement comme un anneau local), la construction d'un 
nombre fini d'éléments $s_i$ qui engendrent~$A$ comme idéal et  tels 
que les  $M_{s_i}$ sont libres.}  

En effet, cette démonstration dynamique fournit un arbre aux feuilles duquel 
``$M$ est libre (après avoir rendu inversibles suffisamment 
d'éléments de~$A$)'' et dont chaque embranchement est obtenu en rendant 
inversible un des deux éléments $s$, $t$ pour lesquels on~a prouvé 
$s+t=1$. Une inspection détaillée de la démonstration par Azumaya nous montre 
d'ailleurs que l'arbre d'évaluation dynamique~a exactement $2^n$ 
feuilles lorsque la matrice de projection $F$ est de type $n\times n$. On 
peut donc se poser la question de savoir si la borne $2^n$, obtenue par 
deux voies assez différentes, est la borne la plus 
naturelle{\footnote{~Il est difficile de qualifier cette question de 
mathématique,~à cause du mot ``naturel'' qui, ici, semble résister 
à tout interprétation en termes de foncteurs. Mais parfois les 
questions ``non mathématiques'' sont importantes en mathématiques.}} 
(bien que peut-être pas optimale) pour l'explicitation du 
théorème~\ref{th ptf-loc}.

\ms Ce n'est pas seulement le principe local-global abstrait 1 qui admet 
une interprétation constructive.

Chaque fois qu'on~a un théorème local-global d'algèbre commutative, 
c.-à-d. un énoncé du genre ``telle propriété est vraie pour 
l'anneau~$A$ et le~$A$-module $M$ si et seulement si elle est vraie en 
tous les localisés $A_{\cp}$ et $M_{\cp}$'', on lui donnera alors 
l'interprétation constructive suivante  ``telle propriété est vraie 
pour l'anneau~$A$ et le~$A$-module $M$ si et seulement si elle est vraie 
lorsque l'on se place dans un cadre dynamique et qu'on rajoute l’axiome 
des anneaux locaux''. 

Dire qu'une propriété est vraie dans un cadre dynamique signifie qu'on 
peut construire une évaluation dynamique de la situation telle qu'à 
chaque feuille de l'arbre la propriété soit démontrée vraie. 

Du point de vue classique, les deux théorèmes (le théorème 
classique et son interprétation dynamique et constructive) sont en 
général équivalents (cela dépend cependant de la propriété en 
cause). Du point de vue constructif, seul le deuxième énoncé fait 
sens. L'important, mais c'est là une thèse qui reste~à vérifier en 
pratique, c'est que la démonstration classique de l'énoncé classique se 
réécrit ``automatiquement'' comme démonstration constructive de l'énoncé 
dynamique.

Nous donnons deux exemples de tels énoncés.

Concernant les modules projectifs de type fini, on~a le théorème 
dynamique suivant qui est la version dynamique et constructive du principe 
local-global abstrait de recollement des modules projectifs.

%--- Theorem{ptf dyna loc libre}-------
\begin{theorem} 
\label{th dyna ptf loc libre}  Soit  $M$ un~$A$-module de présentation 
finie. Les propriétés suivantes sont équivalentes:
\begin{itemize}
  \item Le module $M$ est projectif de type fini.
  \item Lorsqu'on évalue dynamiquement~$A$ comme anneau local, le module 
$M$ est projectif de type fini.
  \item Lorsqu'on évalue dynamiquement~$A$ comme anneau local, le module 
$M$ est libre.
 \end{itemize}
\end{theorem}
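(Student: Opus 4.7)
The plan is to establish the cycle $(3) \Rightarrow (2) \Rightarrow (1) \Rightarrow (3)$, using the concrete principle \ref{prop ptf-local-concret} as the bridge from dynamic to static, and the Azumaya-style proof of the local freedom lemma (third proof, page~\pageref{lelilo}) as the bridge in the opposite direction.

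The implication $(3) \Rightarrow (2)$ is immediate: at each leaf of the dynamic evaluation, $M$ is free of finite type, hence projective of finite type, and this deduction survives the assembly of the tree. For $(1) \Rightarrow (3)$, the strategy is to reread the Azumaya proof inside the dynamic theory of local rings. Since $M$ is of finite presentation and projective of finite type, it is (isomorphic to) the image of an idempotent matrix $F \in \mathrm{Mat}_n(A)$. Each ``first case / second case'' step of the Azumaya argument, which classically rests on a discrete residue field, is replaced in the dynamic framework by a single application of the axiom $AL$ to the identity $f_{1,1} + (1 - f_{1,1}) = 1$. This opens two branches; in each, a symbolic manipulation strictly reduces the problem to an idempotent matrix of size $n-1$. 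After at most $n$ iterations the tree has at most $2^n$ leaves, at each of which $F$ has been conjugated to a standard idempotent matrix $\I_{k,n,n}$, so $M$ is manifestly free at that leaf.

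For $(2) \Rightarrow (1)$, one reads the dynamic proof backwards. A dynamic proof that $M$ is projective of finite type is a finite tree whose internal nodes correspond to splittings $s_\alpha + t_\alpha = 1$ in $A$; at each leaf one has a finite list of elements $s_{i_1}, \ldots, s_{i_k}$ which have been forced invertible along the branch, and the hypothesis guarantees that $M$ is projective of finite type over the concrete localisation $A_{s_{i_1} \cdots s_{i_k}}$. The same Rabinowitsch-style combination used in the proof of \ref{plcd faits anneau local} assembles the products coming from the different leaves into a finite family $(u_1, \ldots, u_m)$ with $u_1 A + \cdots + u_m A = A$ and $M_{u_j}$ projective of finite type for each $j$. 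The concrete principle \ref{prop ptf-local-concret} then delivers that $M$ itself is projective of finite type over $A$.

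The main obstacle is the rigorous dynamic semantics of the compound predicate ``$M$ is projective of finite type'', since the dynamic local-global principle \ref{plcd faits anneau local} is stated only for atomic facts $t = 0$. The finite-presentation hypothesis on $M$ is precisely what makes this predicate reducible to a finite conjunction of such atomic facts (for a fixed finite generating family $g_1, \ldots, g_q$, one expresses projectivity by the existence of linear forms $\alpha_i$ satisfying $x = \sum \alpha_i(x)\, g_i$, and this in turn is expressible by equations thanks to fact~\ref{fact lin form loc prf}). The concrete recollement \ref{prop ptf-local-concret} then plays, for this compound predicate, exactly the role that the concrete recollement of equalities plays in the proof of \ref{plcd faits anneau local}, and the dynamic-to-concrete transfer goes through without further difficulty.
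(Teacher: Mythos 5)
Your proposal is correct and follows essentially the same route the paper intends for this theorem (which it states without a detailed formal proof): the dynamic rereading of the Azumaya-style third proof of the lemme de la liberté locale (page~\pageref{lelilo}) gives $(1)\Rightarrow(3)$ with its tree of $2^n$ leaves, $(3)\Rightarrow(2)$ is immediate, and $(2)\Rightarrow(1)$ is obtained by extracting from the finite evaluation tree a comaximal family of elements of $A$ and invoking the concrete principle \ref{prop ptf-local-concret}. You merely make explicit two points the paper leaves implicit in its surrounding discussion, namely the Rabinowitsch-style elimination of the adjoined inverses (as in the proof of \ref{plcd faits anneau local}) and the handling of the compound predicate via the finite presentation of $M$.
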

%--- end-theorem-----------------------------------------

Concernant les modules projectifs de rang constant, on~a le théorème 
dynamique suivant, qui constitue notre version constructive la plus 
élaborée de la proposition \ref{prop ptfrangconstant}.

%--- Theorem{ptf dyna loc rg cst}------
\begin{theorem} 
\label{th dyna ptf rang constant}  Soit  $M$ un~$A$-module de 
présentation finie et $k$ un entier naturel. Les propriétés 
suivantes sont équivalentes:
\begin{itemize}
  \item Le module $M$ est projectif de type fini et lorsque l'on évalue 
dynamiquement~$A$ comme corps, l'espace vectoriel $M$ est de dimension 
$k$.
  \item Lorsqu'on évalue dynamiquement~$A$ comme anneau local, le 
module $M$ est libre de rang~$k$.
  \item Le module $M$ est projectif de type fini et si $F$ est une 
matrice de projection $n\times n$ ayant pour image un module isomorphe~à 
$M$, le polynôme caractéristique de $F$ est égal~à $X^{n-k}(X-1)^k$,~à des nilpotents près.
  \item Le module $M$ est projectif de type fini et si $F$ est une 
matrice de projection $n\times n$ ayant pour image un module isomorphe~à 
$M$, le polynôme caractéristique de $F$ est égal~à $X^{n-k}(X-1)^k$ 
et tous les mineurs d'ordre $k+1$ de $F$ sont nuls.
\end{itemize}
\end{theorem}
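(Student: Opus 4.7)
Le plan est d'établir (4) $\Rightarrow$ (3) $\Rightarrow$ (2) $\Rightarrow$ (4) et (1) $\Leftrightarrow$ (2). L'implication (4) $\Rightarrow$ (3) est triviale, tandis que (3) $\Rightarrow$ (4) est précisément le contenu du théorème \ref{th ptf rang constant}: l'hypothèse sur le polynôme caractéristique, combinée à la décomposition en idempotents du théorème \ref{th matproj}, force $r_h=0$ pour $h\neq k$ et $r_k=1$, d'où l'égalité exacte du polynôme caractéristique à $X^{n-k}(X-1)^k$ et l'annulation de tous les mineurs d'ordre $k+1$.

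Pour (3) $\Rightarrow$ (2), j'appliquerais la troisième démonstration (à la Azumaya) du lemme de la liberté locale comme procédure dynamique, en traitant $A$ comme anneau local dynamique au sens de la section \ref{subsec aval}. Cette démonstration procède par embranchements successifs de la forme ``$f_{1,1}$ inversible ou $1-f_{1,1}$ inversible'', produisant un arbre à feuilles libres. À chaque feuille, située dans un localisé $A_{s_i}$, le module $M$ est libre d'un certain rang $r_i$; la proposition \ref{prop cas local} donne alors que le polynôme caractéristique de $F$ y vaut $(X-1)^{r_i}X^{n-r_i}$. Par ailleurs, ce même polynôme est l'image dans $A_{s_i}$ de celui sur $A$, à savoir $X^{n-k}(X-1)^k$ à des nilpotents près. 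La comparaison des coefficients force alors $r_i=k$ dès que $A_{s_i}$ est non trivial, et le cas trivial est absorbé par la convention \ref{conven rgc}.

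Pour (2) $\Rightarrow$ (4), une évaluation dynamique de $A$ comme anneau local dont les feuilles donnent $M$ libre de rang $k$ fournit, par simple lecture des embranchements, une famille finie $s_1,\ldots,s_m$ d'éléments de $A$ avec $\sum s_iA=A$ telle que chaque $M_{s_i}$ soit libre de rang $k$ sur $A_{s_i}$. Le principe local-global concret \ref{prop ptf-local-concret} assure que $M$ est projectif de type fini. Dans chaque $A_{s_i}$, le polynôme caractéristique de $F$ vaut exactement $X^{n-k}(X-1)^k$ et tous les mineurs d'ordre $k+1$ s'annulent; le principe local-global concret de recollement des égalités \ref{plcc ring} transporte alors ces relations vers $A$ lui-même.

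Enfin, pour (1) $\Leftrightarrow$ (2), l'axiome ``corps'' (tout élément est nul ou inversible) étant strictement plus fort que l'axiome des anneaux locaux, une évaluation dynamique comme corps raffine une évaluation dynamique comme anneau local: chaque feuille locale se subdivise en feuilles-corps, et le rang du module libre à la feuille locale devient la dimension de l'espace vectoriel à chaque feuille-corps descendante. Sous (2), toute feuille-corps porte donc un $k$-espace vectoriel, et $M$ est projectif de type fini par (2) $\Rightarrow$ (4), d'où (1). Réciproquement, sous (1), la procédure à la Azumaya fournit un arbre local à feuilles libres de rang $r_i$; en prolongeant chaque feuille par une évaluation dynamique comme corps, on obtient des feuilles de dimension $r_i$, qui doivent valoir $k$ par (1). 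Le point délicat se situe dans cette dernière équivalence: il faut justifier rigoureusement la notion de raffinement entre les deux types d'évaluations dynamiques et la compatibilité du rang libre local avec la dimension des feuilles-corps descendantes.
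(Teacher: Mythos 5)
Notez d'abord que l'article ne démontre pas ce théorème : il est énoncé, juste après le théorème \ref{th dyna ptf loc libre}, comme illustration de la thèse selon laquelle la démonstration classique (ici celle de la proposition \ref{prop ptfrangconstant}) se réécrit automatiquement en démonstration dynamique ; il n'y a donc pas de démonstration officielle à laquelle comparer la vôtre. Cela dit, votre cycle (4)$\Rightarrow$(3)$\Rightarrow$(2)$\Rightarrow$(4) est solide et n'utilise que les outils du texte : le théorème \ref{th ptf rang constant} pour (3)$\Rightarrow$(4) ; pour (3)$\Rightarrow$(2), l'évaluation dynamique à la Azumaya, où votre « comparaison des coefficients » se formule plus proprement via les idempotents du théorème \ref{th matproj} (à une feuille où le rang est $r_i\neq k$, l'image de $r_k=1$ vaut $0$, la feuille est donc triviale et la convention \ref{conven rgc} s'applique) ; pour (2)$\Rightarrow$(4), l'extraction d'éléments comaximaux à partir de l'arbre --- extraction que l'article lui-même affirme sans détail, et qui demande en toute rigueur de chasser les dénominateurs pour ramener dans $A$ les éléments inversés aux n\oe uds profonds --- suivie des principes \ref{prop ptf-local-concret} et \ref{plcc ring} (ou, pour les seules égalités, du principe \ref{plcd faits anneau local}), du lemme \ref{lem polycar ptf} et du théorème \ref{th matproj} dans chaque $A_{s_i}$.

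Le trou véritable, que vous signalez sans le combler, est l'équivalence (1)$\Leftrightarrow$(2), surtout dans le sens (1)$\Rightarrow$(2). La condition (1) affirme l'existence d'\emph{une} évaluation dynamique de $A$ comme corps dont toutes les feuilles donnent la dimension $k$ ; pour en déduire que le rang $r_i$ obtenu aux feuilles d'une \emph{autre} évaluation (celle d'Azumaya, comme anneau local) vaut $k$, il faut (i) greffer la première évaluation sous chaque feuille de la seconde, (ii) constater qu'une feuille combinée où sont prouvés à la fois « libre de rang $r_i$ » et « dimension $k$ » avec $r_i\neq k$ prouve $1=0$, et (iii) faire remonter ce collapsus à la feuille locale par un argument de conservativité pour l'axiome des corps (du type truc de Rabinovitch, ou via la machinerie de la section \ref{sec iclg2} : l'axiome $ALMI(2)$ est précisément « nul ou inversible », cf. propositions \ref{prop faits AIU ALMI} et \ref{plcd faits AIU ALMI}), pour conclure que cette feuille est triviale donc inoffensive. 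Rien de cela n'est fourni dans votre texte, qui se contente d'invoquer un « raffinement » entre les deux types d'évaluations ; ce raffinement ne va que dans le sens facile (2)$\Rightarrow$(1), où l'axiome des anneaux locaux se simule par l'axiome des corps. Une voie plus conforme à l'esprit de l'article consisterait à lire le « corps » de (1) comme corps résiduel de l'anneau local dynamique (structure de la section \ref{subsec AInv}), de sorte qu'une seule et même évaluation voit simultanément « $M$ libre de rang $k$ » et « $M$ résiduellement de dimension $k$ » --- c'est essentiellement la deuxième démonstration du lemme de la liberté locale --- ce qui évite toute comparaison de deux arbres distincts.
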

%--- end-theorem-----------------------------------------

%----- SECTION{Local-global constructif suite et fin}--
\section{Compléments sur l'inter\-préta\-tion cons\-truc\-tive du 
principe local-global} 
\label{sec iclg2}
%------------------------------------------------------------------
 
Nous reprenons dans cette section la question de la relecture constructive 
de la démonstration du théorème \ref{th matproj}. Comme nous l'avons déjà 
signalé, une inspection détaillée de cette démonstration nous montre que ce 
que nous avons~à faire se résume en deux grandes étapes:

\ni --- fournir une démonstration du lemme de la liberté locale sous forme 
d'une démonstration par évaluation dynamique. 

\ni --- dans la démonstration du théorème \ref{th matproj} utiliser le 
principe local-global dynamique \ref{plcd faits anneau local} en lieu et 
place du principe local-global abstrait \ref{plca ring}.  

La troisième démonstration du lemme de la liberté locale remplit la 
première condition. Cependant, si on considère la première ou la 
deuxième démonstration du lemme de la liberté locale, on constate qu'elle 
n'est pas directement une démonstration par évaluation dynamique dans la 
théorie des anneaux locaux (telle que nous l'avons définie~à la 
section \ref{subsec aval}). Il s'agit néanmoins dans les deux cas d'une 
démonstration {\sl  élémentaire}, i.e. qui peut être développée en 
tant que démonstration formelle~à l'intérieur de la théorie du premier 
ordre des anneaux locaux. 

Le théorème 1.1 de \cite{clr}, qui est un théorème de logique (une 
variante du {\sl  théorème d'élimination des coupures}), nous permet 
de transformer toute démonstration d'un fait $t=0$ dans la théorie formelle du 
premier ordre des anneaux locaux en une démonstration par simple évaluation 
dynamique. Ainsi, nous  avons mis~à jour un contenu algorithmique 
caché pour la démonstration classique abstraite que nous avons donnée du 
théorème \ref{th matproj}, même si nous prenons la première ou la 
deuxième démonstration du lemme de la liberté locale (qui ne sont pas 
entièrement constructives). 

Pour ne pas faire appel~à ce théorème de logique, nous donnons dans 
la section \ref{subsec AInv}, le moyen de récupérer directement la 
démonstration du lemme de la liberté locale comme démonstration par évaluation 
dynamique lorsque nous utilisons la deuxième démonstration. Pour cela il nous 
faut introduire en tant que tels les  prédicats d'inversibilité et de 
non inversibilité qui figurent explicitement dans les deux premières 
démonstrations du lemme de la liberté locale.   
%---- SUBsection{Anneau avec inversibles: Définition des structures}-
\subsection{Anneau avec idéal et préinversibles: définition des 
structures} 
\label{subsec AInv}
%------------------------------------------------------------------

Notre premier travail consiste ici~à décrire un anneau muni d'un 
monoïde et d'un idéal, comme première approche d'un anneau local 
avec ses inversibles et son idéal maximal. 

La structure d'anneau (commutatif) avec idéal et préinversibles est la 
structure d'anneau commutatif, basée sur $(1,0,+,-,\times)$, où on 
rajoute deux prédicats: $\Ua(x)$ pour dire ``$x$ est préinversible'' 
(i.e.~$x$ est inversible modulo l'idéal), et $\Ja(x)$ pour dire ``$x$ 
est dans l'idéal’’ (c.-à-d. résiduellement nul). Nous avons déjà 
les trois axiomes $A(1),\;A(2),\;A(3)$ et nous rajoutons le système 
d'axiomes suivant.
%--------------------begin array---------------
$$\begin{array}{rlccl} 
  x=0&\vdash \;  \Ja(x)   &\qquad&\qquad& AI(1) \\
(\Ja(x),\;\Ja(y)) \; &\vdash  \;\Ja(x+y)    &\qquad&\qquad& AI(2) \\
\Ja(x)  &\vdash  \;\Ja(xy)    &\qquad&\qquad& AI(3) \\
\Ja(x^2)  &\vdash  \;\Ja(x)    &\qquad&\qquad& AI(4) \\
  &\vdash  \;  \Ua(1)&\qquad&\qquad& AU(1) \\
(\Ua(x),\;\Ja(y))&\vdash  \;  \Ua(x+y)&\qquad&\qquad& AU(2) \\
(\Ua(x),\;\Ua(y))&\vdash  \;  \Ua(xy)&\qquad&\qquad& AU(3) \\
\Ua(xy)&\vdash  \;  \Ua(x)&\qquad&\qquad& AU(4) \\
(\Ua(x),\; xy=0)&\vdash  \;  y=0&\qquad&\qquad& AU(5) \\
(\Ua(x),\; \Ja(xy)&\vdash  \;  \Ja(y)&\qquad&\qquad& AU(6) \\
\end{array}$$
%---------------------end array--------------
Nous considérons une structure d'anneau avec idéal et préinversibles 
comme une structure dynamique, une structure ``où on calcule'' et pour 
laquelle on n'utilise que les trois prédicats ``$x=0$'', $\Ja(x)$ et  
$\Ua(x)$ {\sl ~à l'exclusion de tous autres prédicats plus 
compliqués}.
Rappelons que l'égalité $t=t'$ est elle-même considérée 
simplement comme une autre écriture pour $t-t'=0$. 

Se donner une {\sl  présentation d'anneau avec idéal et 
préinversibles}, c'est donner un ensemble $G$ de ``générateurs'' et 
un ensemble $R$ de ``relations''  qui sont toutes de la forme $t=0$ ou de 
la forme $\Ua(t)$ ou de la forme $\Ja(t)$ avec $t$ un élément de 
$\zg$. Le but du calcul est de construire des termes $t'$ tels que $t'=0$ 
ou tels que $\Ja(t')$ ou tels que $\Ua(t')$. 
Pour simplifier, nous considèrerons que la présentation est donnée 
par $G$ et par trois parties de $\zg$, $\Rzero$, $\Rua$ et $\Rja$, qui 
correspondent aux trois types de relations données dans la 
présentation.

Notez que les trois faits suivants sont équivalents: $1=0$, $\Ja(1)$ et 
$\Ua(0)$. Dans ce cas, pour tout terme $t$ les faits $t=0$, $\Ua(t)$ et 
$\Ja(t)$ sont prouvables.

Récapitluons: nous définissons la structure d'{\sl  anneau avec 
idéal et préinversibles}, comme une structure basée sur 
$(1,0,+,-,\times,\Ja,\Ua)$, et soumise aux axiomes $A(1),\ldots,A(3)$, 
$AI(1),\ldots,AI(4)$, $AU(1),\ldots,AU(6)$. Un anneau dynamique avec 
idéal et préinversibles est donné par une présentation 
$(G;\Rzero,\Rja,\Rua)$ où $\Rzero$, $\Rja$ et $\Rua$ sont trois parties 
de $\zg$.

Le lecteur pourra protester et dire que nous n'avons pas mis exactement 
les axiomes correspondant~à la structure. Nous demandons en effet que 
l'idéal soit radical, et par ailleurs nous ne donnons aucun axiome pour 
garantir que les préinversibles peuvent être inversés modulo 
l'idéal. Disons que ce n'était pas là notre but. Nous décrivons en 
fait  une bonne structure intermédiaire pour arriver~à la structure 
d'anneau local avec son idéal maximal et ses inversibles. 

En fait notre structure ``pauvre'' est intéressante parce qu'elle 
contient suffisamment d'axiomes sans toutefois comporter aucun axiome avec 
$\exists$ ni aucun axiome avec $\lor$. Cela permet de démontrer 
facilement quels sont les faits prouvables pour une structure dynamique 
donnée.

Nous introduisons maintenant la structure dynamique d'{\sl  anneau local 
avec idéal maximal et inversibles}.
C'est la structure d'anneau avec idéal et préinversibles qu'on 
évalue dynamiquement en considérant les trois axiomes 
supplémentaires suivants:

%---- begin array   axiomes ALMI -------------
$$\begin{array}{rlccl} 
\Ua(x)    &\vdash \; \exists u\; ux=1 &\qquad&\qquad& ALMI(1) \\ 
&\vdash\;  (\Ua(x)\lor\Ja(x))&\qquad&\qquad& ALMI(2) \\
x+y=1 &\vdash \;(\Ua(x)\lor\Ua(y))&\qquad&\qquad& ALMI(3) \\ 
\end{array}$$
%---------------------end array--------------

%--- Remark{axiomes ALMI}--------------
\begin{remark} 
\label{rem axiomes ALMI}
En fait le dernier axiome résulte facilement des précédents: 
si on~a $\Ja(x)$, puisqu’on~a $\Ua(1)$ on en déduit $\Ua(1-x)$. 
\end{remark}
%--- end-remark-----------------------------------------

La lectrice n'aura pas de mal~à se convaincre que les axiomes de la 
structure d'anneau local avec idéal maximal et inversibles sont 
exactement ceux qui caractérisent les anneaux locaux~à corps 
résiduel discret avec des prédicats spécifiant les éléments de 
l'idéal maximal et les inversibles.

En fait, on aurait pu, de manière plus naturelle, introduire la 
structure d'anneau local avec idéal maximal et inversibles en donnant 
seulement cinq axiomes qui traduisent la définition des éléments 
inversibles, des éléments non inversibles et l'axiome des anneaux 
locaux. Sans introduire les axiomes $AI$ ni les axiomes $AU$ on aurait 
simplement pris les trois axiomes $ALMI$ ci-dessus et les deux suivants:
%---- begin array   axiomes ALMI -------------
$$\begin{array}{rlccl} 
xy=1    &\vdash \; \Ua(x) &\qquad&\qquad& ALMI(1bis) \\ 
(\Ua(x),\Ja(x))&\vdash\;  1=0 &\qquad&\qquad& ALMI(2bis) \\
\end{array}$$
%---------------------end array--------------

%---- SUBsection{Faits prouvables}----------
\subsection{Faits prouvables et interprétation du principe local-global 
abstrait} 
\label{subsec Faits prouvables}
%------------------------------------------------------------------

%--- Prop{faits ann avec id et preinv}-
\begin{proposition} 
\label{prop faits AIU}
Soit $(G;\Rzero,\Rja,\Rua)$ un anneau avec idéal et préinversibles, 
dynamique, et soit $t\in\zg$. 
Soit $\Izero$ l'idéal de $\zg$ engendré par $\Rzero$, $\Ija$ l'idéal 
de $\zg$ engendré par $\Rja$ et~$\Mua$ le monoïde multiplicatif de~$\zg$ engendré par $\Rua$.
 
\ni Alors:
%-----------------begin item------------------
\begin{itemize}
\item  $t=0$ est prouvable si et seulement si on~a dans $\zg$ une 
égalité du type
%-----------------begin $$------------------
$$(u+j)t+i\=0
$$
%-----------------end $$------------------
avec $u\in\Mua$, $j\in\Ija$, et $i\in\Izero$. 

\item  $\Ja(t)$ est prouvable si et seulement si on~a dans $\zg$ une 
égalité du type
%-----------------begin $$------------------
$$ut^n+j+i\=0
$$
%-----------------end $$------------------
avec $n\in\N$, $u\in\Mua$, $j\in\Ija$ et $i\in\Izero$. 

\item  $\Ua(t)$ est prouvable si et seulement si on~a dans $\zg$ une 
égalité du type
%-----------------begin $$------------------
$$u+j+at+i\=0
$$
%-----------------end $$------------------
avec $u\in\Mua$, $j\in\Ija$, $a\in\zg$ et $i\in\Izero$. 

\end{itemize}
%-----------------end item------------------
\end{proposition}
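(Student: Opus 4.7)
The plan is to prove each of the three equivalences by showing the two directions separately: completeness (given the algebraic identity in $\zg$, construct a derivation of the fact) and soundness (given a derivation of the fact, extract such an identity). Since the axiom system in section~\ref{subsec AInv} contains no $\lor$ or $\exists$, derivations are forward chains, so soundness reduces to a clean induction on derivation length.

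For completeness, I would first establish three auxiliary lemmas by induction on the generation of $\Izero$, $\Ija$, $\Mua$: every $i \in \Izero$ satisfies $i = 0$ provably (using $A(1)$--$A(3)$), every $j \in \Ija$ satisfies $\Ja(j)$ provably (using $AI(1)$--$AI(3)$), and every $u \in \Mua$ satisfies $\Ua(u)$ provably (using $AU(1)$ and $AU(3)$). Given a witnessing identity, the axioms then chain as follows. From $(u+j)t + i = 0$: combine $\Ua(u)$ and $\Ja(j)$ by $AU(2)$ to get $\Ua(u+j)$, use $i = 0$ to rewrite $(u+j)t = 0$, and apply $AU(5)$. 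From $ut^n + j + i = 0$: derive $\Ja(ut^n)$, apply $AU(6)$ to obtain $\Ja(t^n)$, and iterate $AI(4)$. From $u + j + at + i = 0$: derive $\Ua(-at)$ by $AU(2)$ combined with $i = 0$, then apply $AU(4)$ twice (using commutativity) to conclude $\Ua(t)$.

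For soundness, the initial facts (hypotheses in $\Rzero$, $\Rja$, $\Rua$) trivially match the stated forms. Axioms acting within a single predicate or expressing closure properties ($A$, $AI$, $AU(1)$, $AU(3)$, $AU(4)$, and the easy cross-predicate $AI(1)$) are handled by routine multiplicative combinations. For instance $A(2)$ ($x = 0, y = 0 \vdash x+y = 0$) is verified by multiplying each input witness by the pre-invertible from the other and adding, producing a new witness with $U = uu' \in \Mua$; $AI(4)$ simply doubles the exponent; $AI(1)$ rewrites $(u+j)x + i = 0$ as $ux + (jx + i) = 0$, yielding a $\Ja$-witness with $n = 1$.

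The substantive cases, which I expect to be the main technical obstacle, are the cross-predicate axioms $AU(2)$, $AU(5)$ and $AU(6)$. The common technique is to start from the $\Ua$-witness $u + ax \equiv 0$ modulo $\Ija + \Izero$ and raise it to a suitable power, so that the binomial expansion leaves one term involving only $u^k$ (producing the required pre-invertible $U \in \Mua$) while all other terms absorb into $\Ija + \Izero$. Concretely, for $AU(5)$ I would multiply $u + j + ax + i = 0$ by $(u'+j')y$ and use $(u'+j')xy = -i'$ to eliminate $xy$, collecting the pieces into the required form for $y = 0$. For $AU(6)$ I would multiply $u'(xy)^n + j' + i' = 0$ by $(-a)^n$ so that $(-ax)^n$ becomes $(u + j + i)^n$, and the expansion $(u+j+i)^n = u^n + (j+i)Q$ regroups into a $\Ja(y)$-witness with exponent $n$. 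For $AU(2)$ the analogous trick uses the identity $u + a(x+y) = ay - j - i$: raise it to the $m$-th power (where $m$ is the nilpotence exponent from the $\Ja(y)$-witness) and multiply by $u'$, so that the leading term $u'(ay)^m$ collapses via $u'y^m = -j' - i'$ while every other term contains a factor from $\Ija$ or $\Izero$. This binomial manipulation is the heart of the proof.
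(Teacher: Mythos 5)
Your proposal follows essentially the same route as the paper: sufficiency by chaining the axioms from the witnessing identity, and necessity by induction on the (branch-free) derivation, checking that each axiom turns ``conforming'' witnesses into ``conforming'' ones, with the same raise-to-a-power/binomial regrouping for the cross-predicate axioms $AU(2)$ and $AU(6)$ that the paper works out in detail (your $AU(5)$ computation is also correct, though it needs no power trick). One small repair: for $A(2)$ you must multiply each witness by the full factor $u'+j'$ coming from the other one (not just the pre-invertible $u'$), since otherwise the stray terms $u'jx$ and $uj'y$ cannot be absorbed into the required form $(U+J)(x+y)+I\=0$; similarly $AI(2)$ is not a purely routine multiplicative combination but needs the binomial identity $(x+y)^{m+n}=ax^m+by^n$ together with multiplication by $uu'\in\Mua$, which is precisely why the paper singles it out as one of its three detailed cases.
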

%--- end-proposition----------------------------------------

Un corollaire immédiat est le suivant (dans la lignée du théorème 
1.1 de \cite{clr}: on peut toujours rajouter des nouveaux prédicats~à 
condition de les soumettre~à des axiomes ``logiques'' raisonnables).
%--- Corollary{faits anneau AIU}-------
\begin{corollary} 
\label{cor faits anneau AIU} 
Si un anneau dynamique $(G;\Rzero)$ est vu comme un anneau avec idéal et 
préinversibles dynamique $(G;\Rzero,\emptyset,\emptyset)$, les faits 
prouvables $t=0$ sont les mêmes pour les deux structures dynamiques.
\end{corollary}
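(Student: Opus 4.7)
Le plan est d'obtenir le corollaire comme une spécialisation immédiate de la Proposition~\ref{prop faits AIU}, l'une des deux directions étant entièrement triviale. Tout d'abord, toute dérivation dans l'anneau dynamique pur $(G;\Rzero)$ n'utilise que les axiomes $A(1)$--$A(3)$, lesquels restent tous des axiomes de la structure d'anneau avec idéal et préinversibles. Par conséquent, tout fait $t=0$ prouvable dans $(G;\Rzero)$ est \emph{a fortiori} prouvable dans $(G;\Rzero,\emptyset,\emptyset)$.

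Pour la réciproque, j'appliquerais le premier item de la Proposition~\ref{prop faits AIU} à la présentation $(G;\Rzero,\emptyset,\emptyset)$. Avec $\Rua=\emptyset$, le monoïde multiplicatif $\Mua\subset\zg$ engendré par $\Rua$ se réduit à $\{1\}$, et on peut donc prendre $u=1$. Avec $\Rja=\emptyset$, l'idéal $\Ija\subset\zg$ engendré par $\Rja$ se réduit à $\{0\}$, et on peut donc prendre $j=0$. L'identité témoin $(u+j)t+i\equiv 0$ s'effondre alors sur $t+i\equiv 0$ avec $i\in\Izero$, c'est-à-dire sur l'appartenance $t\in\Izero$. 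Par la Proposition~\ref{prop faits anneau}, c'est précisément la condition pour que $t=0$ soit prouvable dans l'anneau dynamique $(G;\Rzero)$.

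Il n'y a aucun obstacle véritable ici: tout le contenu algorithmique est concentré dans la Proposition~\ref{prop faits AIU}, qui fournit une forme normale pour les faits $t=0$ prouvables dans la théorie enrichie. Une fois cette proposition acquise, le corollaire se réduit à l'observation que les témoins issus de $\Mua$ et de $\Ija$ dégénèrent trivialement lorsque $\Rua$ et $\Rja$ sont vides. Autrement dit, on obtient que l'enrichissement de la théorie des anneaux commutatifs par les deux prédicats $\Ja$ et $\Ua$, soumis aux axiomes $AI(1)$--$AI(4)$ et $AU(1)$--$AU(6)$ mais sans relation supplémentaire, est \emph{conservatif} pour les formules de la forme $t=0$, conformément à l'esprit du théorème 1.1 de \cite{clr} invoqué juste avant l'énoncé.
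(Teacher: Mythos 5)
Votre démonstration est correcte et suit exactement la voie du papier, qui présente cet énoncé comme un corollaire immédiat de la proposition~\ref{prop faits AIU}: la direction facile vient de l'inclusion des axiomes $A(1)$--$A(3)$ dans la structure enrichie, et la réciproque s'obtient en spécialisant la forme normale $(u+j)t+i\=0$ au cas $\Mua=\{1\}$, $\Ija=\{0\}$, ce qui donne $t\in\Izero$ et conclut via la proposition~\ref{prop faits anneau}. Rien à redire.
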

%--- end-corollary------------------------------------

Un autre corollaire remarquable et immédiat est le suivant.
%--- Corollary{faits et collapsus AIU}-
\begin{corollary} 
\label{cor faits et collapsus AIU}
Dans un anneau dynamique avec idéal et préinversibles: 

\ni --- a) un fait $\Ua(t)$ est prouvable si et seulement si  la relation 
$\Ja(t)$ (rajoutée dans la présentation) rend prouvable $1=0$,  

\ni --- b) un fait $\Ja(t)$ est prouvable si et seulement si  la relation 
$\Ua(t)$ (rajoutée dans la présentation) rend prouvable $1=0$.  
\end{corollary}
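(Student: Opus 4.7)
The plan is to derive both equivalences as immediate applications of Proposition \ref{prop faits AIU} to the enlarged presentation. First I would record how the auxiliary data change under the addition of one new relation: adjoining $\Ja(t)$ to $\Rja$ replaces $\Ija$ by $\Ija+t\Z[G]$ (the ideal generated by $\Rja\cup\{t\}$) while leaving $\Izero$ and $\Mua$ unchanged; symmetrically, adjoining $\Ua(t)$ to $\Rua$ replaces $\Mua$ by the multiplicative monoid generated by $\Rua\cup\{t\}$, whose elements are exactly the products $ut^n$ with $u\in\Mua$ and $n\in\N$.

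For (a), the Proposition's criterion for $1=0$ applied to the enlarged presentation $(G;\Rzero,\Rja\cup\{t\},\Rua)$ asserts the existence of an identity $(u+j')\cdot 1+i=0$ in $\Z[G]$ with $u\in\Mua$, $i\in\Izero$ and $j'\in\Ija+t\Z[G]$. Writing $j'=j+at$ with $j\in\Ija$ and $a\in\Z[G]$, this identity becomes $u+j+at+i=0$, which is precisely the Proposition's criterion for the provability of $\Ua(t)$ in the original presentation. The converse is just the same identity read backwards: an $\Ua(t)$-witness $u+j+at+i=0$ becomes a $1=0$-witness in the enlarged presentation by re-reading $j+at$ as an element of the enlarged ideal.

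Part (b) follows by the symmetric argument. The $1=0$ criterion in $(G;\Rzero,\Rja,\Rua\cup\{t\})$ produces an identity $u'+j+i=0$ with $u'$ in the enlarged monoid, hence $u'=ut^n$ with $u\in\Mua$ and $n\in\N$; this gives $ut^n+j+i=0$, exactly the Proposition's criterion for the provability of $\Ja(t)$ in the original presentation. Once again the reverse direction is obtained by re-reading the witness in the enlarged structure.

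There is essentially no obstacle here: the whole argument is a transparent bookkeeping check. The only point that warrants some care is verifying that the decompositions $j'=j+at$ and $u'=ut^n$ really do describe the general form of elements in the enlarged ideal and the enlarged monoid respectively, so that the translation between witnesses for $\Ua(t)$ or $\Ja(t)$ in one presentation and witnesses for $1=0$ in the other is faithful in both directions.
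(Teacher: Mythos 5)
Your argument is correct and is exactly what the paper has in mind: it states this corollary as \emph{immédiat} from Proposition~\ref{prop faits AIU}, and your bookkeeping (the enlarged ideal $\Ija+t\Z[G]$, resp.\ the enlarged monoid $\{ut^n\}$, turning the $1=0$ criterion into the $\Ua(t)$, resp.\ $\Ja(t)$, criterion) is precisely the intended verification. No gap to report.
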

%--- end-corollary------------------------------------

%-----------------begin proof------------------
\begin{proof}[Démonstration de la proposition]
On voit facilement que les conditions sont suffisantes. 
\\
Pour voir qu'elles sont nécessaires, il suffit de vérifier que les 
éléments de $\Rzero$, $\Rja$ et $\Rua$ sont ``conformes'' et que chaque 
axiome produit des éléments ``conformes''~à partir d'éléments 
``conformes''.
La plupart des calculs ne présentent aucune difficulté. Nous traitons 
les cas des axiomes $AI(2)$, $AU(2)$ et~$AU(6)$.

\sni
{\sl  Cas de l'axiome $AI(2)$}.
On suppose que l'on~a deux faits prouvables ``conformes'' $\Ja(t_1)$ et 
$\Ja(t_2)$, c.-à-d. qu'on~a deux égalités dans $\zg$
%--------------------begin array---------------
$$\begin{array}{c} 
 u_1t_1^m+j_1+i_1\=0  \\ 
 u_2t_2^n+j_2+i_2\=0  \\ 
\end{array}$$
%---------------------end array--------------
(avec les mêmes conventions que dans l'énoncé pour $u$, $j$ et $i$), 
on en déduit
%--------------------begin array---------------
$$\begin{array}{c} 
u_1u_2(t_1+t_2)^{m+n}\;\=\;u_1u_2(at_1^m+bt_2^n)\;\=\;
(u_2a)(u_1t_1^m)+(u_1b)(u_2t_2^n)\;\=   \\ 
u_2a(-j_1-i_1)+u_1b(-j_2-i_2)\;\=\;-j_3-i_3   \\
\end{array}$$
%---------------------end array--------------
Et le fait prouvable $\Ja(t_1+t_2)$ est donc bien ``conforme''.

\smallskip\ni
{\sl  Cas de l'axiome $AU(2)$}.
On suppose qu'on~a deux faits prouvables ``conformes'' $\Ja(t_1)$ et 
$\Ua(t_2)$, c.-à-d. deux égalités dans $\zg$
%--------------------begin array---------------
$$\begin{array}{c} 
 u_1t_1^m+j_1+i_1\=0  \\ 
 u_2+j_2+a_2t_2+i_2\=0  \\ 
\end{array}$$
%---------------------end array--------------
d'où
%-----------------begin $$------------------
$$u_1u_2+a_2u_1(t_1+t_2)+j_3+i_3\;\=\;a_2u_1t_1 
$$
%-----------------end $$------------------
on élève~à la puissance $m$, dans chaque membre on regroupe 
judicieusement les termes, on obtient
%-----------------begin $$------------------
$$u_4+a_4(t_1+t_2)+j_4+i_4\;\=\;a_5u_1t_1^m\;\=\;-a_5(j_1+i_1);\=\;j_5+i_5
$$
%-----------------end $$------------------
Et le fait prouvable $\Ua(t_1+t_2)$ est donc bien ``conforme''.

\smallskip\ni
{\sl  Cas de l'axiome $AU(6)$}.
On suppose qu'on a deux faits prouvables ``conformes'' $\Ua(t_1)$ et 
$\Ja(t_1t_2)$, c.-à-d. deux égalités dans $\zg$
%--------------------begin array---------------
$$\begin{array}{c} 
 u_1+j_1+a_1t_1+i_1\=0  \\ 
 u_2(t_1t_2)^m+j_2+i_2\=0  \\ 
\end{array}$$
%---------------------end array--------------
Dans la première égalité, on fait passer $a_1t_1$ dans le second 
membre, on élève~à la puissance $m$ et on regroupe judiceusement les 
termes, on obtient
%-----------------begin $$------------------
$$u_3+j_3+i_3\;\=\;a_3t_1^m
$$
%-----------------end $$------------------
puis on multiplie par $u_2t_2^m$, cela donne
%-----------------begin $$------------------
$$u_4t_2^m+j_4+i_4\;\=\;a_3u_2(t_1t_2)^m\;\=\;j_5+i_5
$$
%-----------------end $$------------------
Et le fait prouvable $\Ja(t_2)$ est donc bien ``conforme''.
\end{proof}
%-----------------end proof------------------

%--- Proposition{faits AIU ALMI}-------
\begin{proposition} 
\label{prop faits AIU ALMI}
Soit $A=(G;\Rzero,\Rja,\Rua)$ un anneau avec idéal et préinversibles 
dynamique et $t\in\zg$. \\
Si on l'évalue dynamiquement comme anneau local avec idéal maximal et 
inversibles, tout fait prouvé (du type $t=0$ ou $\Ja(t)$ ou $\Ua(t)$) 
peut également être prouvé sans recours aux trois axiomes 
supplémentaires $ALMI(1,2,3)$. 
\end{proposition}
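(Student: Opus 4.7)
The plan is to proceed by induction on the structure of the dynamic evaluation proof. If no $ALMI$ axiom is used, there is nothing to do. Otherwise I inspect the topmost $ALMI$ application; by induction each sub-proof is already a proof in the basic anneau avec idéal et préinversibles framework, so it suffices to eliminate a single application. By Remark~\ref{rem axiomes ALMI} the axiom $ALMI(3)$ is a consequence of the other axioms, so only $ALMI(1)$ and $ALMI(2)$ require treatment.

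For $ALMI(1)$: under premise $\Ua(x)$ one adjoins a fresh $u$ with relation $ux=1$. If the target fact $P$ is provable in the extended presentation $(G\cup\{u\};\Rzero\cup\{ux-1\},\Rja,\Rua)$, Proposition~\ref{prop faits AIU} produces an explicit witness identity in $\Z[G\cup\{u\}]$. I eliminate $u$ by the Rabinovitch manœuvre: multiply by $x^N$ with $N$ exceeding the $u$-degree, and reduce using $u^k x^k = 1 + (ux-1)Q_k$; the remaining $(ux-1)$-multiple must vanish by a degree argument in the polynomial ring $\zg[u]$, leaving an identity in $\zg$ but with a parasitic factor $x^N$ in place of the desired element of $\Mua$. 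To finish I invoke the characterisation of the premise $\Ua(x)$ from Proposition~\ref{prop faits AIU}, which supplies $u_x+j_x+a_x x+i_x \equiv 0$; raising to the $N$-th power gives $a_x^N x^N \in \pm\Mua + \Ija + \Izero$, and multiplying the identity by $a_x^N$ then absorbs the $x^N$ into the form demanded by Proposition~\ref{prop faits AIU}.

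For $ALMI(2)$: the derivation bifurcates into branch A (with $\Ua(t)$ added to $\Rua$) and branch B (with $\Ja(t)$ added to $\Rja$). By induction and Proposition~\ref{prop faits AIU} applied to each branch, the witness from branch A has monoid element of shape $u_1 t^a$ (since $\Mua$ is enlarged to contain $t$), while the witness from branch B has an ideal part containing an extra term $bt$ (since $\Ija$ is enlarged to contain $t$). The crux is to merge these into a single identity of the exact original form. My plan is to iterate the branch B relation into itself: branch B yields a substitution of shape $bt\cdot X \equiv -(u_2+j_2)X \pmod{\Izero}$ for an appropriate target factor $X$, and iterating $a$ times produces $b^a t^a X \equiv (-1)^a (u_2+j_2)^a X \pmod{\Izero}$. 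Multiplying by $u_1$, substituting $u_1 t^a X$ from branch A, and expanding $(u_2+j_2)^a \in u_2^a + \Ija$ delivers an identity of the exact form required by Proposition~\ref{prop faits AIU} for the original presentation, up to a global sign which is harmless.

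The main obstacle will be recognising this iteration in the $ALMI(2)$ step. The tempting reflex of raising branch B's full identity $(u_2+j_2+bt)s+i_2=0$ to the $a$-th power introduces an unwanted factor $s^a$ on the left, and only produces a witness for $\Ja(s)$ rather than for $s=0$. The successful manœuvre instead iterates only the substitution rule $bt\cdot X = -(u_2+j_2)X - i_2$, with the factor $X$ chosen according to the target prime fact: $X=s$ for $P=(s=0)$, $X=s^n$ (after equalising exponents) for $P=\Ja(s)$, and $X=1$ for $P=\Ua(s)$. One then verifies case by case that the resulting identity falls into exactly the shape prescribed by the corresponding case of Proposition~\ref{prop faits AIU}.
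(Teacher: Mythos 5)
Your proposal is correct and, at its core, follows the same route as the paper: the engine is the characterisation of provable facts (Proposition~\ref{prop faits AIU}), the reduction is to eliminating one use of $ALMI(1)$ or $ALMI(2)$ at a time (with $ALMI(3)$ dispatched by Remarque~\ref{rem axiomes ALMI}), and your iteration of the branch-B substitution is exactly the paper's identity $(u-s)\cdot(\hbox{quelque chose})\=u^m-s^m$ with $u:=u_2+j_2$, $s:=ta_2$, followed by the same multiplication by the branch-A data. The genuine difference is organisational: the paper first invokes Corollaire~\ref{cor faits et collapsus AIU} (together with the remark) to reduce every target fact $\Ja(t)$ or $\Ua(t)$ to a fact of the form $p=0$ in a modified presentation, and then only has to check the single case $p=0$; you instead treat the three target shapes directly, adjusting the auxiliary factor case by case. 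Each choice buys something: the paper's collapsus reduction keeps the verification to one computation, while your direct treatment avoids leaning on the corollary and, usefully, makes explicit what the paper hides behind ``l'usuel truc de Rabinovitch'' for $ALMI(1)$, namely the absorption of the parasitic factor $x^N$ by means of the provable premise $\Ua(x)$ (raise $a_xx\equiv-(u_x+j_x)$ to the $N$-th power and negate if needed). One small bookkeeping point in your $\Ja(s)$ case: the branch-B witness has the shape $u_2s^{n}+j_2+bt+i_2\=0$ (only the monoid element carries $s^{n}$), not $(u_2+j_2+bt)s^{n}+i_2\=0$, so the ``$X=s^n$'' framing is not literally the witness; but the relation $bt\equiv-(u_2s^{n}+j_2)$ modulo $\Izero$ iterates just as well, and combining with the branch-A witness $u_1t^ms^{n'}+j_1+i_1\=0$ yields $u_1u_2^m s^{n'+mn}+j+i\=0$, which is of the required form, so the plan goes through unchanged.
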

%--- end-proposition----------------------------------------

%-----------------begin proof------------------
\begin{proof}
Vues{\footnote{~Accord de genre avec le plus proche cité.}} 
 la remarque \ref{rem axiomes ALMI} et 
le corollaire \ref{cor faits et collapsus AIU}, 
il suffit de montrer que l'utilisation une fois de l'axiome $ALMI(1)$ ou 
de l'axiome $ALMI(2)$ ne change pas les faitrs prouvés de la forme 
$p=0$. Pour $ALMI(1)$ c'est l'usuel truc de Rabinovitch. 
\\
Voyons $ALMI(2)$. On considère un terme $t$ et on ouvre deux branches, 
l'une avec $\Ua(t)$ et l'autre avec $\Ja(t)$. On part de deux égalités 
dans $\zg$ qui correspondent~à la prouvabilité de $p=0$  
respectivement dans chacune des deux branches:
%--------------------begin array---------------
\begin{eqnarray} 
(u_1t^m+j_1)p+i_1\;\=\;0       \\[.3em] 
(u_2+j_2-ta_2)p+i_2\;\=\;0       
\end{eqnarray}
%---------------------end array--------------
avec $u_h\in\Mua$, $j_h\in\Ija$, $a_h\in\zg$ et $i_h\in\Izero$.
\\
On se base sur l'identité $(u-s)\times$(quelque chose)$\;\=\;(u^m-s^m)$ 
avec $u:=u_2+j_2$ et $s:=ta_2$. En multipliant $(2)$ par ``quelque chose'', 
on obtient
%-----------------begin $$------------------
$$(u_3+j_3-t^ma_3)p+i_3\;\=\;0\eqno (3)
$$
%-----------------end $$------------------

\ni On multiplie $(1)$ par $a_3$, on obtient:
%-----------------begin $$------------------
$$(a_3u_1t^m+j_4)p+i_4\;\=\;0\eqno (4)
$$
%-----------------end $$------------------

\ni On multiplie $(3)$ par $u_1$, on obtient: 
%-----------------begin $$------------------
$$(u_5+j_5-a_3u_1t^m)p+i_5\;\=\;0\eqno (5)
$$
%-----------------end $$------------------

\ni Enfin on additionne $(4)$ et $(5)$, on obtient:
%-----------------begin $$------------------
$$(u_5+j_6)p+i_6\;\=\;0   
$$
%-----------------end $$------------------

\ni ce qui est l'égalité cherchée.
\end{proof}
%-----------------end proof------------------

Nous voici en état de prouver constructivement une nouvelle forme 
concrète, un peu plus sophistiquée que le principe local-global 
dynamique \ref{plcd faits anneau local}, du principe local-global abstrait 
\ref{plca ring}. Cette fois-ci, les inversibles sont pris en compte.

%: Principe local global dynamique 2
\begin{plcd}[recollements dynamiques, deuxième  version]  
\label{plcd faits AIU ALMI}~

\noindent 
Soit $A=(G;\Rzero,\emptyset,\emptyset)$ un anneau avec idéal et 
préinversibles dynamique (avec $\Rja$ et $\Rua$ vides) et soit $t\in\zg$. 
\begin{itemize}
\item {\em  (Recollement dynamique des égalités, deuxième 
version)} Pour prouver un fait $t=0$ dans l’anneau $A$, vous pouvez aussi 
bien faire comme si l'anneau était local, en utilisant les prédicats 
d'inversibilité et non inversibilité. \\ 
De manière plus formelle: un fait du type $t=0$ est prouvable dans la 
structure d'anneau local dynamique avec idéal maximal et inversibles si 
et seulement si il est prouvable dans~$A$ comme anneau dynamique.
\item {\em  (Recollement dynamique des inversibles)} 
Un fait du type $\Ua(t)$ est prouvable dans la structure d'anneau local 
dynamique avec idéal maximal et inversibles si et seulement si $t$ est 
inversible dans~$A$ comme anneau dynamique, i.e., s'il existe un $u\in\zg$ 
avec $tu=1$ prouvable.\\ 
De manière moins formelle: Pour prouver un fait ``$t$ est inversible'' 
dans un anneau, vous pouvez aussi bien faire comme si l'anneau était 
local, en utilisant les prédicats d'inversibilité et de non 
inversibilité.
\item {\em  (Une caractérisation dynamique des nilpotents)}\\ 
Un fait du type $\Ja(t)$ est prouvable dans la structure d'anneau local 
dynamique avec idéal maximal et inversibles si et seulement si $t$ est 
nilpotent dans~$A$ comme anneau dynamique, i.e., s'il existe un entier 
naturel $m$ avec $t^m=0$ prouvable.
\end{itemize}
\end{plcd}
%--- end-plcd-----------------------------------------

%-----------------begin proof------------------
\begin{proof}
Cela résulte de la proposition \ref{prop faits AIU ALMI}  et de la 
caractérisation donnée dans la proposition~\ref{prop faits AIU}.
 \end{proof}
%-----------------end proof------------------
%--- Remark{faits AIU ALMI}------------
\begin{remark} 
\label{rem plcd faits AIU ALMI}
Comme conséquence du recollement dynamique des égalités dans 
le théorème précédent, si  $(G;\Rzero)$  est un anneau local 
dynamique, tout fait prouvé en le considérant comme un anneau local 
avec idéal maximal et inversibles peut être prouvé dans la 
structure d'anneau local (on peut toujours rajouter des nouveaux 
prédicats~à condition de les soumettre~à des axiomes ``logiques'' 
raisonnables). 
\end{remark}
%--- end-remark-----------------------------------------

%---- SUBsection{Récapitulons}------------
\subsection{Récapitulons} 
\label{subsec Recapitulons}
%------------------------------------------------------------------

On récapitule sur la relecture constructive  dynamique de la démonstration du théorème~\ref{th matproj}.  Un aspect un peu déroutant est que, une 
fois qu'on dispose des prédicats d'inversibilité et non 
inversibilité pour un anneau local, 
c.-à-d. en fait des prédicats d'égalité~à zéro et de non 
égalité~à zéro dans le corps résiduel, la théorie de la 
dimension des espaces vectoriels sur les corps, vue comme théorie du 
rang des matrices, et nécessaire pour la {\sl  première démonstration} du 
lemme de la liberté locale{\footnote{~Nous pensons par exemple~à la 
partie soulignée de la  phrase suivante: On considère alors un mineur 
résiduellement non nul d'ordre maximum $k$ dans $F$, et de même  un 
mineur résiduellement non nul \underline{d'ordre maximum $n-k$} dans 
$\I_n-F$.}}, n'est pas si simple~à formuler et~à prouver dynamiquement 
sans recours aux prédicats de dépendance linéaire et 
d'indépendance linéaire. Il nous faut introduire des disjonctions de 
conjonctions: un mineur d'ordre $k$ non nul et tous les mineurs d'ordre 
$k+1$ nuls\dots Cela demande donc un travail qui est faisable, mais qu'on ne 
prendra pas la peine de faire ici. \\
Par contre, toujours pour le lemme de la liberté locale, la {\sl  
deuxième démonstration} que nous avons donnée se lit très aisément comme 
une démonstration par évaluation dynamique pour la structure d’anneau local avec inversibles et idéal maximal. Ceci, joint~à la démonstration du principe 
local-global dynamique \ref{plcd faits AIU ALMI} fournit une démonstration 
constructive du théorème  \ref{th matproj}, et par exemple construit 
les identités algébriques recherchées dans le cas générique.

\addcontentsline{toc}{section}{Références}

%--- biblio -------------------------------------------

%---     -----------------------------------------------


\begin{thebibliography}{50}
\bibitem{Bou} Bourbaki. {\it   Algèbre Commutative.} Hermann, 1961.
 
\bibitem{clr} Coste M., Lombardi H., Roy M.-F. {\it  Dynamical method in 
algebra: Effective Nullstellens\"atze}. Prépublication 1996. 

\bibitem{ddd} Della Dora J., Dicrescenzo C., Duval D. {\it  About a new
method for computing in algebraic number fields}.  Proceedings Eurocal'85. 
Lecture Notes in Computer Science 204, p. 289-290 (1985). (Springer)


\bibitem{dr1} Duval D., Reynaud J.-C. {\it  Sketches and Computation (Part 
I) Basic
Definitions and Static Evaluation.}  Mathematical Structures in Computer 
Science 4
(1994) 185--238.

\bibitem{dr2} Duval D., Reynaud J.-C. {\it  Sketches and Computation (Part 
II) Dynamic
Evaluation and Applications.}  Mathematical Structures in Computer Science 
4 (1994)
239--271.

\bibitem{Kni} Knight J.  {\it    Commutative Algebra.} London Mathematical 
Society LNS \num 5. Cambridge University Press, 1971. 

\bibitem{Kun} Kunz E. {\it  Introduction to Commutative Algebra and 
Algebraic Geometry}. Birkh\"auser, 1991.

\bibitem{Lom} Lombardi H. {\it  Relecture constructive de la théorie 
d'Artin-Schreier}. Journal of Pure and Applied 
Logic, special issue on Logic Colloquium'94,  {\bf 91}, (1998), 
59--92. 

\bibitem{Lo} Lombardi H.. {\it  Interprétation  constructive de 
principes local-globals abstraits en algèbre commutative}. En 
préparation.

\bibitem{LQ} Lombardi H., Quitte C.. {\it  Théorie constructive 
élémentaire des modules projectifs de type fini}. En préparation.

\bibitem{MRR} Mines R., Richman F., Ruitenburg W. {\it  A Course in 
Constructive Algebra.} Universitext. Springer-Verlag, 1988.

\bibitem{Nor} Northcott D. {\it   Finite free resolutions.} Cambridge 
tracts in mathematics \num 71. Cambridge University Press, 1976. 

\end{thebibliography}
\end{document}